\documentclass{amsart}
\usepackage{sabbah_three-results}

\hyphenation{alge-braic assu-me assump-tion belong-ing between coho-mo-lo-gy cons-truc-ted cons-truc-tible decom-po-si-tion deduce degree denote denoted exhi-bit expo-nen-tial exten-ded Indeed indexed indu-ced Ked-laya}

\begin{document}
\frontmatter

\title{Three results on holonomic $\mathcal D$-modules}

\author[C.~Sabbah]{Claude Sabbah}
\address{CMLS, CNRS, École polytechnique, Institut Polytechnique de Paris, 91128 Palaiseau cedex, France}
\email{Claude.Sabbah@polytechnique.edu}
\urladdr{https://perso.pages.math.cnrs.fr/users/claude.sabbah}

\begin{abstract}
In this text, we illustrate the use of local methods in the theory of (irregular) holonomic $\mathcal D$-modules.

I. (The Euler characteristic of the de~Rham complex) We show the invariance of the global or local Euler characteristic of the de~Rham complex after localization and co-localization of a holonomic $\mathcal D$-module along a hypersurface, as~well as after tensoring with a rank one meromorphic connection with regular singularities.

II. (Local generic vanishing theorems for holonomic $\mathcal D$-modules) We prove that the natural morphism from the proper pushforward to the total pushforward of an algebraic holonomic $\mathcal{D}$-module by an open inclusion is an isomorphism if we first twist the $\mathcal{D}$-module structure by suitable closed algebraic differential forms.

III. (Laplace transform of a Stokes-filtered constructible sheaf of exponential type) Motivated by the construction in \cite{Y-Z24}, we~propose a slightly different construction of the Laplace transform of a Stokes-perverse sheaf on the projective line and show directly that it corresponds to the Laplace transform of the corresponding holonomic D\nobreakdash-module via the Riemann-Hilbert-Birkhoff-Deligne-Malgrange correspondence. This completes the presentation given in \cite[Chap.\,7]{Bibi10}, where only the other direction of the Laplace transformation is analyzed. We~also compare our approach with the construction made previously in \cite{Y-Z24}.
\end{abstract}

\subjclass{14F10, 14F17, 32C38, 34M40, 35A27}

\keywords{Euler characteristic, de Rham complex, holonomic D-module, irregular singularity, vanishing theorems, Laplace transformation, Stokes-filtered local system, Stokes-perverse sheaf, regular singular connection, irregular connection of exponential type}

\maketitle
\vspace*{-2\baselineskip}
{\let\nobreakspace\xspace\tableofcontents}
\mainmatter

\vspace*{-3\baselineskip}\vskip0pt
\section*{Introduction}

The theory of holonomic \DXmodules (with possibly irregular singularities) on a complex manifold $X$ has been developed thanks to the fundamental result of K.\,Kedlaya \cite{Kedlaya10} and its algebraic analogue due to T.\,Mochizuki \cite{Mochizuki09,Mochizuki08}. The latter work makes it possible to manipulate Stokes structures in dimension~$\geq2$, as~already illustrated in several works, such as, in particular, \cite{Mochizuki10, Mochizuki10b,Bibi16b, Teyssier17}, and of course by the construction of the Riemann-Hilbert correspondence \cite{D-K13}.

In this note, we give a few more examples of uses of the methods developed in these articles in relation to questions raised or resolved in Gabriel Ribeiro's thesis \cite{Ribeiro24} and the article \cite{Y-Z24} by Tony Yue Yu and Shaowu Zhang. Our aim is to illustrate the use (or show how to avoid it) of Stokes structures in dimension $\geq2$. The main results are Theorems \ref{th:Euler} and \ref{th:Eulerlocal} in Part~\ref{part:Euler}, Theorem \ref{th:vanishingtwist} in Part \ref{part:vanishing}, and Theorem \ref{th:main} in Part \ref{part:Laplace}.

\subsubsection*{Notation}
We use the notation $\D f_*$ and $\D f_!$ for the pushforward and proper pushforward by a morphism $f$ in the bounded derived category of $\cD$-modules; they are also often denoted by $f_+$ and $f_\dag$ respectively. Similarly, the pullback of left $\cD$-modules, or $\cO$-modules with flat connection, corresponding to $\bL f^*$ at the level of $\cO$-modules is denoted by $\D f^*$; it is also often denoted by $f^+$.

\part{The Euler characteristic of the\nobreakspace de\nobreakspace Rham\nobreakspace complex}\label{part:Euler}

\section{Introduction to Part \ref{part:Euler}}
Let $X$ be a compact complex manifold of dimension $n$ and let $j:U\hto X$ be the inclusion of the complement of a reduced divisor $D$ in $X$, with complementary inclusion $i:D\hto X$. Let $\cD_X$ denote the sheaf of holomorphic differential operators on $X$ and let $\cD_X(*D)$ denote the sheaf of meromorphic differential operators with poles along $D$, that is, $\cD_X(*D)=\cO_X(*D)\otimes_{\cO_X}\cD_X=\cD_X\otimes_{\cO_X}\cO_X(*D)$. For a holonomic (left) \DXmodule $\cM$ (more generally a bounded complex of \DXmodules with holonomic cohomology, \ie an object of $\catD^\rb_\hol(\cD_X)$), we consider its de~Rham complex $\pDR(\cM)$ (with $\Omega^n_X\otimes\cM$ in degree zero for a holonomic \DXmodule), which is known to be a complex with $\CC$-constructible cohomology (and a perverse sheaf if~$\cM$ is a holonomic \DXmodule).

Let $\chi_\dR(X,\cM)$ denote the global Euler-Poincaré characteristic of the de~Rham complex $\pDR(\cM)$:\vspace*{-3pt}\enlargethispage{.5\baselineskip}
\[
\chi_\dR(X,\cM):=\sum_k(-1)^k\dim\bH^k(X,\pDR(\cM)).
\]
To $\cM$ are associated its localized module (or complex) $\cM(*D)=\cD_X(*D)\otimes_{\cD_X}\cM$ along~$D$, which is known to be holonomic (or with holonomic cohomology), and its dual-localized module (or complex) $\cM(!D)$ defined by duality from $\cM(*D)$. Letting~$\bD$ denote the duality functor for coherent \DXmodules, we know that $\bD$ preserves the category of (bounded complexes of) holonomic \DXmodules. We~then set\vspace*{-3pt}
\[
\cM(!D):=\bD\bigl[(\bD\cM)(*D)\bigr].
\]
There exists a natural morphism $\cM(!D)\to\cM(*D)$ whose cone has cohomology supported on $D$.

By a meromorphic flat bundle on $(X,D)$ we mean a holonomic \DXmodule~$\cM$ which is a vector bundle with integrable connection on $U=X\moins D$ and satisfies $\cM=\cM(*D)$. The dual $\cM^\vee$ of such a meromorphic flat bundle $\cM$ is the meromorphic flat bundle $\cM^\vee:=(\bD\cM)(*D)$ with underlying $\cO_X(*D)$-module $\cHom_{\cO_X(*D)}(\cM,\cO_X(*D))$.

\begin{theoreme}\label{th:Euler}
Let $\cM$ be an object of $\catD^\rb_\hol(\cD_X)$ and let $\cN$ be a meromorphic flat bundle of rank one on $X$ with poles along~$D$ at most and having regular singularities along~$D$. Then the global Euler-Poincaré characteristic of the de~Rham complexes~of
\[
\cM(!D),\quad\cM(*D),\quand \cM\otimes \cN
\]
coincide.
\end{theoreme}

\begin{remarque}\mbox{}
\begin{enumerate}
\item(Algebraic version)
If $U$ is smooth and quasi-projective, a similar result holds in the framework of algebraic \DUmodules, and is obtained from Theorem \ref{th:Euler} by choosing any smooth projectivization $X$ of $U$ and by making use of GAGA for algebraic \DXmodules. For a coherent algebraic $\cD_U$-module $M$ are defined the proper and total pushforwards $\D j_!M$ and $\D j_*M$, which are holonomic if $M$ is so.

It states, for an algebraic $\cD_U$-module $M$ and an algebraic vector bundle~$N$ of rank one on $U$ with a flat connection having a regular singularity at infinity, that
\begin{starequation}\label{eq:chialg}
\chi_{\dR,\rc}(U,M)=\chi_{\dR}(U,M)=\chi_{\dR}(U,M\otimes N),
\end{starequation}%
with (due to a GAGA type argument for the second equalities)
\begin{align*}
\chi_{\dR,c}(U,M)&:=\sum_k(-1)^k\dim\bH^k(X,\pDR(\Dj_!M))\\
&\hphantom{:}=\sum_k(-1)^k\dim\bH^k(X^\an,\pDR^\an(\Dj_!M)),\\
\chi_\dR(U,M)&:=\sum_k(-1)^k\dim\bH^k(X,\pDR(\Dj_*M))\\
&\hphantom{:}=\sum_k(-1)^k\dim\bH^k(X^\an,\pDR^\an(\Dj_*M)).
\end{align*}
(If $N$ has rank $r$, then $\chi_{\dR}(U,M\otimes N)=r\cdot\chi_{\dR}(U,M)$.)
\item
The first equality of \eqref{eq:chialg} is proved by Gabriel Ribeiro in the appendix of his PhD thesis \cite{Ribeiro24} and the second equality is mentioned as a conjecture, with a nice algebraic proof only in the case where $D=\emptyset$. In~this part, we~show how local methods allow to prove the theorem in the analytic setting, so that the theorem in the algebraic setting follows by a GAGA type argument. The proof is of a transcendental nature, and relies on the notion of irregularity complex, due to Z.\,Mebkhout \cite{Mebkhout90,Mebkhout04}, and on the theorem of Kedlaya and Mochizuki \cite{Kedlaya09,Kedlaya10,Mochizuki07b,Mochizuki08} on the normal form of meromorphic flat bundles after blowing ups.
\end{enumerate}
\end{remarque}

We will thus focus on a local analytic version of the theorem. For a holonomic \DXmodule~$\cM$ and a point $x\in X$, the cohomologies of the complex of germs $\pDR(\cM)_x$ are finite-dimensional vector spaces, and the local Euler characteristic of the de~Rham complex at~$x$ is the number
\[
\chi_{\dR,x}(\cM)=\sum_k(-1)^k\dim\cH^k(\pDR(\cM)_x)=:\chi_x(\pDR(\cM)).
\]

\begin{theoreme}\label{th:Eulerlocal}
Let $\cM$ be an object of $\catD^\rb_\hol(\cD_X)$ and let $\cN$ be a meromorphic flat bundle of rank one on $X$ with poles along~$D$ at most and having regular singularities along~$D$. Then, for each $x\in X$, the local Euler-Poincaré characteristic at $x$ of the de~Rham complexes of the following $\cD_X$-modules coincide:
\[
\cM(!D),\quad\cM(*D),\quand \cM\otimes \cN.
\]
\end{theoreme}

\begin{proof}[Proof that Theorem \ref{th:Eulerlocal} implies Theorem \ref{th:Euler}]
For any complex~$\cF$ with $\CC$\nobreakdash-construc\-tible cohomology on $X$, whose cohomology is locally constant on the strata of a (Whitney) stratification $(X_\alpha)_\alpha$ of $X$ by locally closed smooth subvarieties, the Euler characteristic
\[
\chi(\cF_x)=\sum_k\dim\cH^k(\cF_x)
\]
does not depend on the choice of $x$ in $X_\alpha$ and is denoted by $\chi_\alpha(\cF)$. We~then have
\begin{equation}\label{eq:chiXF}
\chi(X,\cF)=\sum_\alpha \chi_\alpha(\cF)\cdot\chi(X_\alpha).
\end{equation}
In particular, for two such complexes $\cF_1,\cF_2$, we have $\chi(X,\cF_1)=\chi(X,\cF_2)$ as soon as $\chi(\cF_{1,x})=\chi(\cF_{2,x})$ for all $x$ in $X$: indeed, we can find a stratification~$(X_\alpha)_\alpha$ suitable for both, and the condition is equivalent to $\chi_\alpha(\cF_1)=\chi_\alpha(\cF_2)$ for all~$\alpha$; we then apply Formula \eqref{eq:chiXF}.
\end{proof}

\begin{remarque}
For a meromorphic flat bundle on $(X,D)$, Theorem \ref{th:Eulerlocal} amounts to the equality, for any $x\in X$,
\[
\chi_{\dR,x}(\cM)=\chi_{\dR,x}(\cM^\vee)=\chi_{\dR,x}(\cM\otimes\cN).
\]
In other words, we claim that $\chi_{\dR,x}(\cM(!D))=\chi_{\dR,x}(\cM^\vee)$. This equality holds because
\begin{itemize}
\item
we have by definition $\cM(!D)=\bD(\cM^\vee)$;
\item
let $\bD_\PV$ be the Poincaré-Verdier duality on the category of bounded complexes with constructible cohomology on $X^\an$; then we have
\[
\chi_x\bigl(\bD_{\PV}(\cF)\bigr)=\chi_x\bigl(\cF\bigr);
\]
\item
for a holonomic bounded complex $\cN$, we have $\pDR(\bD\cN)\simeq\bD_\PV(\pDR(\cN))$.
\end{itemize}
\end{remarque}

We will show in Section \ref{sec:reduction} how to reduce the proof of Theorem \ref{th:Eulerlocal} to that of the following theorem. The notion of good formal structure on a meromorphic flat bundle and that of a good meromorphic flat bundle is recalled in Section \ref{subsubsec:goodformalstr}.

\begin{theoreme}\label{th:Eulerlocalgood}
Assume that $D$ is a divisor with simple normal crossings in $X$ and that $\cM=\cM(*D)$ is a good meromorphic flat bundle on $(X,D)$. Let $\cN$ be as in Theorem~\ref{th:Euler}. Then, for any $x\in X$, the following local Euler characteristics coincide:
\begin{starequation}\label{eq:Eulerlocalgoodstar}
\chi_{\dR,x}(\cM)=\chi_{\dR,x}(\cM^\vee)=\chi_{\dR,x}(\cM\otimes\cN).
\end{starequation}%
\end{theoreme}

Let us recall that the \emph{irregularity complex} $\Irr_D(\cM)$ of $\cM$ along~$D$, as introduced by Mebkhout\footnote{In \cite{Mebkhout90,Mebkhout04}, the complex $\Irr_D(\cM)$ is defined in term of the non shifted de~Rham complex $\DR(\cM)$, while here we use the shifted one; nevertheless, the same shift, with respect to \eqref{eq:Eulerlocalgoodstar}, occurs in the terms of \eqref{eq:Eulerlocalgood}.} \cite{Mebkhout90,Mebkhout04}, is defined as the kernel, in the perverse category, of the natural morphism $\pDR(\cM(*D))\to\bR j_*j^{-1}\pDR\cM$.

\begin{definition}[Irregularity number]
The \emph{irregularity number of $\cM$ at $x\in D$} is defined as
\[
\irr_x(\cM,D):=\chi(\Irr_D(\cM)_x).
\]
\end{definition}

Given a locally constant sheaf $\cV$ on $U=X\moins D$, with $D$ being a normal crossing divisor, the Euler characteristic number $\chi((\bR j_*\cV)_x)$ vanishes at every point $x\in\nobreak D$ because there exists a fundamental system of open neighborhoods of $x$ whose trace on $X\moins D$ has the homotopy type of a finite product of circles. Therefore, the statement of Theorem \ref{th:Eulerlocalgood} amounts to the equalities of irregularity numbers
\begin{equation}\label{eq:Eulerlocalgood}
\irr_x(\cM,D)=\irr_x(\cM^\vee,D)=\irr_x(\cM\otimes\cN,D)\quad\forall x\in D.
\end{equation}

\section{Proof of Theorem \ref{th:Eulerlocalgood}}\label{sec:Eulerlocalgood}

Under the assumptions of Theorem \ref{th:Eulerlocalgood}, we will prove the equalities \eqref{eq:Eulerlocalgood}. We~first recall some parts of Section~2 of \cite{Bibi16b}.

\subsubsection{Setting}\label{subsec:nota}
Let $X$ be a complex manifold, let $D$ be a divisor with simple normal crossings in $X$, set $U=X\moins D$, \ie we assume that each irreducible component~$D_i$ ($i\in J$) of $D$ is smooth. For any subset $I\subset J$, we~set $D_I=\bigcap_{i\in I}D_i$ and $D_I^\circ=D_I\moins\bigcup_{j\notin I}D_j$. We~denote the codimension of $D_I^\circ$ by $\ell$, that we regard as a locally constant function on $D_I^\circ$ (which can have many connected components), and by $\iota_I:D_I^\circ\hto D$ the inclusion.

If $Z$ is any locally closed analytic subset of $X$, we~denote by $\cO_{\wh Z}$, the formal completion of $\cO_X$ with respect to the ideal sheaf $\cI_Z$. We~regard $\cO_{\wh Z}$ as a sheaf on~$Z$.

Given $x_o\in D$, there exists a unique subset $I\subset J$ such that $x_o\in D_I^\circ$, and we will be mostly interested in the cases where $Z$ is either the point $x_o\in D$ or the locally closed subset $D_I^\circ$. We~will denote by $\cO_{\wh Z}(*D)$ the sheaf $\cO_{X|Z}(*D)\otimes_{\cO_{X|Z}}\cO_{\wh Z}$, where as usual $\cO_{X|Z}$ (\resp $\cO_{X|Z}(*D)$) denotes the sheaf-theoretic restriction to $Z$ of the sheaf~$\cO_X$ (\resp the sheaf $\cO_X(*D)$ of meromorphic functions on~$X$ with poles at most on~$D$).

If $\varphi$ (\resp $\wh\varphi$) is a section of $\cO_X(*D)$ (\resp of $\cO_{\wh Z}(*D)$), we~denote by $\cE^\varphi$ (\resp $\cE^{\wh\varphi}$) the module with connection $(\cO_X(*D),\rd+\rd\varphi)$ (\resp $(\cO_{\wh Z}(*D),\rd+\rd\wh\varphi)$). It~only depends on the class, also denoted by $\varphi$ (\resp $\wh\varphi$), of $\varphi$ (\resp $\wh\varphi$) modulo~$\cO_X$ (\resp $\cO_{\wh Z}$).

For every $I\subset J$, we~consider the sheaf $\cO_{\wh{D_I^\circ}}$ on $D_I^\circ$. We~also regard it as a sheaf on $X$ by extending it by zero. We~then set $\cD_{\wh{D_I^\circ}}=\cO_{\wh{D_I^\circ}}\otimes_{\cO_X}\cD_X$, and $\cM_{\wh{D_I^\circ}}:=\cD_{\wh{D_I^\circ}}\otimes_{\cD_X}\cM$.

\subsubsection{A reminder of the notion of good formal structure}\label{subsubsec:goodformalstr}
Let $\cM$ be a meromorphic flat bundle on $(X,D)$. We~say that $\cM$ is \emph{good}, or has a \emph{good formal structure} along~$D$ if, for any $x_o\in D$, there exists a local ramification $\rho_{\bmd_I}$ of multi-degree $\bmd_I$ around the branches $(D_i)_{i\in I}$ passing through $x_o$ (hence inducing an isomorphism above~$D_I^\circ$ in the neighborhood of $x_o$) such that the pullback of the formal flat rational bundle $\cM_{\wh{x_o}}:=\cO_{\wh x_o}\otimes_{\cO_{X,x_o}}\cM_{x_o}$ by this ramification decomposes as the direct sum of formal elementary flat $\cO_X(*D)$-modules $\cE^{\wh\varphi}\otimes\wh\cR_{\wh\varphi}$, as~defined below.

We denote by $\nb(x_o)$ a small analytic open neighborhood of $x_o$ in $X$ above which the ramification is defined, and we denote by $x'_o$ the pre-image of $x_o$, so the ramification is a finite morphism $\rho_{\bmd_I}:\nb(x'_o)\to\nb(x_o)$. It~induces a one-to-one map above $D_I^\circ\cap\nb(x_o)$. We~also set $D'=\rho_{\bmd_I}^{-1}(D\cap\nb(x_o))$, so that $D'_I$ maps isomorphically to $D_I\cap\nb(x_o)=D_I^\circ\cap\nb(x_o)$.

In the above decomposition, $\wh\varphi$ varies in a \emph{good} finite subset $\wh\Phi_{x_o}\subset\cO_{\wh{x'_o}}(*D')/\cO_{\wh{x'_o}}$ and~$\wh\cR_{\wh\varphi}$ is a free $\cO_{\wh{x'_o}}(*D')$-module with an integrable connection having a regular singularity along $D'$. In~other words, we~do not distinguish between $\wh\varphi$ and $\wh\psi$ in $\cO_{\wh{x'_o}}(*D')$ if their difference has no poles along $D'$. Goodness means here that for any pair \hbox{$\wh\varphi\neq\wh\psi\in\wh\Phi_{x_o}\cup\{0\}$}, the difference $\wh\varphi-\wh\psi$ can be written as $x^{-\bme}\wh u(x)$, with $\bme\in\NN^{\#I}$ and $\wh u\in\cO_{\wh{x'_o}}$ satisfying $\wh\eta(0)\neq0$ (\cf \cite[\S I.2.1]{Bibi97}. By~\cite[Prop.\,4.4.1\,\&\,Def.\,5.1.1]{Kedlaya10} (\cf also \cite[\S I.2.4]{Bibi97} and \cite[Prop.\,2.19]{Mochizuki10b}), the~$\wh\varphi$'s are convergent, \ie the set~$\wh\Phi_{x_o}$ is the formalization at $x_o$ of a finite subset $\Phi_{x_o}\subset\nobreak\Gamma(\nb(x'_o),\cO_{\nb(x'_o)}(*D')/\cO_{\nb(x'_o)})$, and the decomposition extends in a neighborhood of $x'_o$, that is, it~holds for the pullback by $\rho_{\bmd_I}$ of $\cM_{\wh{D_I^\circ},x_o}$ and induces the original one after taking formalization at $x'_o$.

It is also important to notice the following openness property. The components $\cE^{\wh\varphi}\otimes\wh\cR_{\wh\varphi}$ are the formalization at $x'_o$ of local analytic meromorphic flat bundles $\cE^{\varphi}\otimes\cR_{\varphi}$ defined in some analytic neighborhood of $x'_o$. Let us decompose $D'$ into its irreducible components $D'_i$ with $i\in I$. Then, for any $x'\in D'$ in a sufficiently small analytic neighborhood $\nb(x'_o)$ of $x'_o$, the formalization $(\D\rho_{\bmd_I}^*\cM)_{\wh{x'}}$ decomposes with the components $(\cE^{\varphi}\otimes\cR_{\varphi})_{\wh{x'}}$ for $\varphi\in\Phi_{x_o}$.

\subsubsection{Omitting the Stokes filtration}\label{subsubsec:omitting}
Let us fix a subset $I\subset\{1,\dots,\ell\}$ and focus on proving \eqref{eq:Eulerlocalgood} at $x\in D_I^\circ$. One can express the complex $\Irr_D(\cM)$ in the neighborhood of~$x$ by means of the Stokes filtration of $\cM$ in the neighborhood of~$x$, and more precisely by means of the sheaf of rapid decay flat sections of $\cM$ on the real blow-up of $D$ along its irreducible components (\cf\cite[Cor.\,2.11]{Bibi93} and \cite[Prop.\,3.1]{Bibi16b}. This reduces the computation of the various Euler characteristics to a topological computation. However, the Stokes filtration can make a direct topological computation complicated. Fortunately, \cite[Th.\,1.2]{Bibi16b} allows us to do as if the Stokes filtration were trivial. More precisely, in the neighborhood of $D_I^\circ$, there exists a meromorphic flat bundle with trivial Stokes structure along~$D$, that is, the meromorphic flat bundle decomposes locally (after a possible local ramification) in a way similar to that of the formal decomposition of $\cM$, and whose irregularity complex along $D_I^\circ$ is isomorphic to that of $\cM$. A fortiori, one can compute the various irregularity numbers of \eqref{eq:Eulerlocalgood} at $x\in D_I^\circ$ by means of such a meromorphic flat bundle, that we still denote by $(\cM,\nabla)$.

\subsubsection{Reduction to the case ``good of rank one''}\label{subsubsec:reductionrkone}
Let $\varpi$ denote the real blowup of the components of $D$, let $\scrL$ denote the local system on $\varpi^{-1}(x)\simeq (S^1)^\ell$ consisting of horizontal sections on $X\moins D$ of the above meromorphic flat bundle $(\cM,\nabla)$ and let \hbox{$\scrL_{\leq0}\!\subset\!\scrL$} be the subsheaf of those which have moderate growth in the neighborhood of~$D$. We~set $\scrL^{>0}=\scrL/\scrL_{\leq0}$. Then, by \cite[Prop.\,3.1]{Bibi16b}, we have $\Irr_D(\cM,\nabla)[1]=\bR\varpi_*\scrL^{>0}$, and therefore, as~$\chi((S^1)^\ell,\scrL)=0$, we find $\irr_x(\cM,\nabla)=\chi((S^1)^\ell,\scrL_{\leq0})$.

Let $\rho$ be a local ramification as in Section \ref{subsubsec:goodformalstr}, sending $x'$ to $x$. It~induces a covering $\rho:\varpi^{-1}(x')=(S^1)^\ell\to\varpi^{-1}(x)=(S^1)^\ell$, and the degree $\leq0$ of the Stokes filtration of $\rho^*(\cM,\nabla)$ is identified with $\rho^{-1}(\scrL_{\leq0})$. On the other hand, we have
\[
\chi\bigl((S^1)^\ell,\rho^{-1}(\scrL_{\leq0})\bigr)=\deg\rho\cdot\chi\bigl((S^1)^\ell,\scrL_{\leq0}\bigr),
\]
that is, $\irr_{x'}(\rho^*(\cM,\nabla))=\deg\rho\cdot \irr_x(\cM,\nabla)$. It~is thus enough to show the equalities~\eqref{eq:Eulerlocalgood} after ramification.

By~the previous reduction of Section \ref{subsubsec:omitting}, we can assume that $\rho^*\cM$ has a \emph{convergent} decomposition mimicking its good formal decomposition. In~such a case, $\rho^*\cM$ is a successive extension of good meromorphic flat bundles of rank one (because so is any meromorphic flat bundle with regular singularities along $D$). Since the irregularity number behaves additively in extensions, we can assume from start that~$\cM$ is a good meromorphic flat bundle of rank one.

\subsubsection{Computation of the irregularity number of a good meromorphic flat bundle of rank one}\label{subsubsec:rankone}
Let us be given
\begin{itemize}
\item
a meromorphic function $\varphi(x)=u(x)/x^{\bme}$, with $u$ holomorphic invertible and $\bme=(e_1,\dots,e_\ell)\in\NN^\ell$ (if $\bme=0\in\NN^\ell$, we can choose $u\equiv1$),

\item
a vector $\alpha=(\alpha_1,\dots,\alpha_\ell)\in\CC^\ell$,
\end{itemize}
and we will compute the irregularity number at the origin of the germ of trivial bundle of rank one
\[
(\cE,\nabla)=\biggl(\CC\{x_1,\dots,x_n\}[(x_1\cdots x_\ell)^{-1}],\rd+\rd\varphi+\sum_{i=1}^\ell\alpha_i\frac{\rd x_i}{x_i}\biggr).
\]
We note that, if $(\cN,\nabla)=\bigl(\CC\{x_1,\dots,x_n\}[(x_1\cdots x_\ell)^{-1}],\rd+\sum_{i=1}^\ell\beta_i\sfrac{\rd x_i}{x_i}\bigr)$,
\begin{align*}
(\cE,\nabla)^\vee&=\biggl(\CC\{x_1,\dots,x_n\}[(x_1\cdots x_\ell)^{-1}],\rd-\rd\varphi-\sum_{i=1}^\ell\alpha_i\frac{\rd x_i}{x_i}\biggr),\\
(\cE,\nabla)\otimes(\cN,\nabla)&=\biggl(\CC\{x_1,\dots,x_n\}[(x_1\cdots x_\ell)^{-1}],\rd+\rd\varphi+\sum_{i=1}^\ell(\alpha_i+\beta_i)\frac{\rd x_i}{x_i}\biggr).
\end{align*}

The equalities \eqref{eq:Eulerlocalgood} for $\cM=(\cE,\nabla)$ are an immediate consequence of the first part of the next lemma.

\begin{lemme}
The irregularity number of $(\cE,\nabla)$ at the origin only depends on the family $(e_1,\dots,e_\ell)$ (with the convention that $\bme=0$ if $\varphi\equiv0$).
\end{lemme}

\begin{proof}
We keep the notation of Section \ref{subsubsec:reductionrkone}. The sheaf $\scrL_{\leq0}$ is described, and the irregularity number is then obtained, as follows:

\begin{enumeratei}
\item\label{enum:phinul}
if $\varphi\equiv0$, $\scrL_{\leq0}$ is the locally constant sheaf of rank $1$ and monodromy $\exp(-2\pi\sfi\alpha_i)$ in direction $i$; in this case, we have\vspace*{-1pt}
\[
\irr_0(\cE,\nabla)=\chi((S^1)^\ell,\scrL_{\leq0})=\chi((S^1)^\ell)=0,
\]
whatever $\alpha_i$ are.

\item\label{enum:phinonnul}
if $\varphi\not\equiv0$, $\scrL_{\leq0}$ is the extension by zero of the restriction of the above locally constant sheaf to the open subset $(S^1)^\ell_{\varphi,\bme}\subset(S^1)^\ell$ defined by the inequations
\[
\reel\bigl(\varphi(0)\exp(\textstyle\sum_i-e_i\theta_i)\bigr)>0,\quad i=1,\dots,\ell.
\]
\begin{itemize}
\item
If $\ell\geq2$, then $(S^1)^\ell_{\varphi,\bme}$ contains $S^1$ as a factor, hence $\chi_\rc\bigl((S^1)^\ell_{\varphi,\bme})=0$.

\item
If $\ell=1$, then $(S^1)_{\varphi,e_1}$ is the disjoint union of $e_1$ open intervals of $S^1$, and $\chi_\rc\bigl((S^1)_{\varphi,e_1}\bigr)=-e_1$.\qedhere
\end{itemize}
\end{enumeratei}
\end{proof}

\section{Reduction of the proof of Theorem \ref{th:Eulerlocal} to that of Theorem \ref{th:Eulerlocalgood}}\label{sec:reduction}

Although this reduction is very standard, we give a detailed proof for the sake of completeness. Given the additivity of $\chi_\dR$ and $\chi_{\dR,\rc}$ by exact sequences, we can assume that $\cM$ is a holonomic \DXmodule. Then we argue by induction on the dimension $d$ of the support $Z$ of $\cM$. Since\footnote{Use that $\cN(!D)=0$ if $\cN$ is supported on $D$, and that the kernel and cokernel of $\cM\to\cM(*D)$ are supported on $D$.} $\cM(!D)=[\cM(*D)](!D)$ and $\cM\otimes\cN=\cM(*D)\otimes\cN$, we can assume that $\cM=\cM(*(D))$. The assertion tautological if $\dim\Supp\cM=0$, and we assume it to be true if $\dim\Supp\cM<d$. As the assertion is local, we can assume that there exists a hypersurface $H$ in $X$ containing $D$ such that $Z^\circ:=Z\moins(Z\cap H)$ is smooth of pure dimension $d$, that no irreducible component of $Z$ of dimension $d$ is contained in $H$, and that the restriction of~$\cM$ to~$Z^\circ$ in the sense of $\cD$-modules (via Kashiwara's equivalence) is a holomorphic vector bundle over $Z^\circ$ with integrable connection. By~induction, it~is enough to prove the theorem for the localized module $\cM(*H)$.

Let $\pi:X'\to X$ be a projective morphism such that
\begin{itemize}
\item
$X'$ is smooth of pure dimension $d$,
\item
$H':=\pi^{-1}(H)$ is a divisor with normal crossings,
\item
$\pi:X^{\prime\circ}:=X'\moins H'\to Z^\circ$ is an isomorphism.
\end{itemize}
Such a morphism exists, by resolving the singularities of $(Z_d,H\cap Z_d$, where $Z_d$ is the union of the pure $d$-dimensional components of $Z$. The inverse image $\D\pi^*\cM(*H)$ of $\cM(*H)$ is thus a meromorphic flat bundle on $(X',H')$. Furthermore, by the theorem of Kedlaya-Mochizuki \cite[Th.\,8.1.3]{Kedlaya10}, we can moreover assume that $\D\pi^*\cM(*H)$ is a \emph{good} meromorphic flat bundle on $(X',H')$.

Set $D'=\pi^{-1}(D)$. Then $\cM(*H)$, $\cM(*H)(!D)$ and $\cM(*H)\otimes\cN$ can be recovered respectively from $\D\pi^*\cM(*H)$, $[\D\pi^*\cM(*H)](!D')$ and $[\D\pi^*\cM(*H)]\otimes\D\pi^*\cN$ as their pushforward $\D\pi_*(\D\pi^*\cM(*H))$, etc. As a consequence, $\pDR(\cM(*H))\simeq\bR\pi_*\pDR(\D\pi^*\cM(*H))$, etc., and the constructible function $x\!\mto\!\chi_{\dR,x}(\pDR(\cM(*H))$, etc., is equal to the pushforward, in the sense of \cite{MacPherson74}, of the constructible function $x'\mto\chi_{\dR,x'}\bigl(\pDR(\D\pi^*\cM(*H))\bigr)$, etc.

We conclude that it is enough to prove Theorem \ref{th:Eulerlocal} in the setting of Theorem~\ref{th:Eulerlocalgood}, and this has been achieved in Section \ref{sec:Eulerlocalgood}.\qed

\part{Local generic vanishing theorems for holonomic \texorpdfstring{$\mathcal D$}{D}-modules}\label{part:vanishing}

\section{Local and relative generic vanishing theorems}
In his PhD thesis \cite{Ribeiro24}, Gabriel Ribeiro suggests the statement of a ``relative vanishing theorem'', that we recall in Section \ref{subsubsec:appl}. We~give here a complete proof of a ``local generic vanishing theorem'' that implies the corresponding ``relative'' statement, and that we regard as a corollary of a more general result stated in Section~\ref{subsubsec:genres}: the natural morphism from the proper pushforward to the total pushforward of an algebraic holonomic $\cD$-module by an open inclusion is an isomorphism if we first twist the $\cD$-module structure by suitable closed algebraic differential forms.

\subsubsection{A general result}\label{subsubsec:genres}
Let $U$ be a smooth complex algebraic variety and let~$\cM_U$ be a quasi-coherent (left) $\cD_U$-module, that we regard as a quasi-coherent $\cO_U$-module with integrable connection $\nabla$. Furthermore, let $\etag^o=(\eta_1^o,\dots,\eta_r^o)$ be a finite family of regular closed one-forms on~$U$. For any $a\in\CC^r$, we~consider the twisted $\cD_U$-module
\[
\cM_{U,\etag^o,a}:=\Bigl(\cM_U,\nabla+\sum_ia_i\eta^o_i\Bigr).
\]
This construction can be extended to any object $\cM_U\in\catD^\rb_\mathrm{qcoh}(\cD_U)$, and the $(\etag^o,a)$ twist commutes with taking cohomology. If $\cM_U$ belongs to $\catD^\rb_\hol(\cD_U)$, so does $\cM_{U,\etag^o,a}$. Let~$X$ be any smooth compactification of $U$, that is, a smooth proper complex algebraic variety that contains $U$ as an open dense subvariety and let $j:U\hto X$ denote the open inclusion. We~say that $X$ is a \emph{good compactification of $U$} if moreover $D:=X\moins U$ is a divisor with simple normal crossings in $X$ (\ie each irreducible component of $D$ is smooth). Good compactifications exist: starting from any Nagata compactification of $U$, one blows up above $X\moins U$ according to Hironaka's theorem on embedded resolution of singularities.

For any smooth compactification $X$ of $U$, we~denote by
\[
\Dj_*,\text{ \resp }\Dj_!:\catD^\rb_\hol(\cD_U)\to\catD^\rb_\hol(\cD_X)
\]
the direct image, \resp the direct image with proper support (\ie $\Dj_!=\bD\,\Dj_*\bD$, where~$\bD$ is the duality functor). For $\cM\in\catD^\rb_\hol(\cD_U)$, there exists a natural morphism (\cf \cite[Th.3.2.16]{H-T-T08})
\begin{equation}\label{eq:dag*a}
\Dj_!\cM_{U,\etag^o,a}\to \Dj_*\cM_{U,\etag^o,a},
\end{equation}
where both terms are objects of $\catD^\rb_\hol(\cD_X)$.

\begin{definition}[Relative vanishing]\label{def:relvanish}
We~say that relative generic vanishing holds for $(X,\cM_U,\etag^o)$ if there exists a Zariski-dense open subset $V\subset\CC^r$ such that, for each $a\in V$, \eqref{eq:dag*a} is an isomorphism.
\end{definition}

Let us start by noticing:

\begin{lemme}\label{lem:1}
Assume that a smooth compactification $X'$ of $U$ dominates $X$, \ie there exists a proper modification $\pi:X'\to X$ which is the identity on $U$. If relative generic vanishing holds for $(X',\cM_U,\etag^o)$, then it holds for $(X,\cM_U,\etag^o)$.
\end{lemme}

\begin{proof}
Let $j':U\hto X'$ denote the open embedding. As $\pi$ is proper, we~have $\D\pi_!=\D\pi_*$, and the natural morphism $\Dj'_!(\cbbullet)\to \Dj'_*(\cbbullet)$ yields the natural morphism
\[
\bigl[\D\pi_!\, \Dj'_!(\cbbullet)\to \D\pi_*\,\Dj_*(\cbbullet)\bigr]\simeq \bigl[\Dj_!(\cbbullet)\to \Dj_*(\cbbullet)\bigr].
\]
If the first one is an isomorphism when applied to $\cM_{U,\etag^o,a}$, so is the second one.
\end{proof}

Let $(Y,D)$ be a \emph{good} compactification of $U$ and let $\eta$ be a closed logarithmic one-form on $Y$ with poles along $D=\bigcup_{i\in J}D_i$, with $D_i$ smooth and connected. Let~$D^o$ denote the smooth locus of $D$. Since $\eta$ is closed, the residue $\res_i(\eta)$ of $\eta$ along $D_i^o:=D_i\cap D^o$ (which is a holomorphic function on $D_i^o$) is constant. For any subset $I\subset J$ such that $\bigcap_{i\in I}D_i\neq\emptyset$, set $I(\eta)=\{i\in I\mid \res_i(\eta)\neq0\}$.

\begin{definition}[Non-resonance]
We~say that $\eta$ is \emph{$I$-non-resonant} if $I(\eta)\neq\emptyset$ and there does not exist any relation $\sum_{i\in I(\eta)}m_i\res_i(\eta)=0$ with $(m_i)_{i\in I(\eta)}\in\NN^{\#I(\eta)}\moins\{0\}$.
\end{definition}

For example, if $\eta$ has a pole along only one irreducible component $D_i$ of $D$, it~is $I$-non-resonant for any subset~$I$ containing $i$.

\begin{definition}[{\cite[\S1.3]{Bibi93} and \cite[Def.\,2.1.2]{Mochizuki08}}]
Let $(Y,D)$ be a good compactification of $U$ and let $\eta$ be a closed rational one-form on $Y$ with poles along~$D$. We~say that $\eta$ is \emph{good wild} along~$D$ if the following property is satisfied: for any $x_o\in D$ and a local analytic chart $\nb(x_o)=\Delta^\ell_x\times\Delta^{n-\ell}_y$ such that $\nb(x_o)\moins D=(\Delta^*_x)^\ell\times\Delta^{n-\ell}_y$, there exist
\begin{itemize}
\item
a multi-index $\bme\in\NN^\ell$ and an invertible holomorphic function $u(x,y)\in\cO(\nb(x_o))$,
\item
a logarithmic one-form $\omega\in\Gamma(\nb(x_o),\Omega^1_X(\log D))$,
\end{itemize}
such that $\eta|_{\nb(x_o)}=\rd(u(x,y)x^{-\bme})+\omega$, with the notation $x^{\bme}=x_1^{e_1}\cdots x_\ell^{e_\ell}$.
\end{definition}

\begin{theoreme}\label{th:vanishingtwist}
Let $\eta^o_1,\dots\eta^o_r$ be a family of closed one-forms on $U$. Assume that there exists a good compactification $(Y,D)$ of $U$ such that their rational extensions $\eta_1,\dots\eta_r$ as (closed) rational one-forms on $Y$ with pole along $D$ satisfy
\begin{enumeratei}
\item\label{th:vanishingtwisti}
either each $\eta_k$ is a logarithmic one-form with poles in $D$ and for each non-empty intersection $\bigcap_{i\in I}D_i$, there exists $k\in\{1,\dots,r\}$ such that $\eta_k$ is $I$-non-resonant,
\item\label{th:vanishingtwistii}
or each $\eta_k$ is good wild along $D$, and for each component $D_i$ of $D$, there exists $k\in\{1,\dots,r\}$ such that $\eta_k$ has a pole of order $\geq2$ along $D_i$.
\end{enumeratei}
Then, given any holonomic $\cD_U$-module $\cM_U$ (or any object in $\catD^\rb_\hol(\cD_U)$), for any compactification $X$ of $U$,
\begin{enumeratea}
\item
in case \eqref{th:vanishingtwisti}, there exists a finite set of affine forms $L_{\bmell,c}(a)=\sum_k\ell_k a_k+c$, with $\bmell,c$ belonging to a finite subset of $(\CC^r\moins\{0\})\times\CC$, defining $V:=\CC^r\moins\bigcup_{\bmell,c}L_{\bmell,c}^{-1}(\ZZ)$,
\item
in case \eqref{th:vanishingtwistii}, there exists a Zariski-dense open subset~$V$ of~$\CC^r$, which is the complement of a finite number of affine hyperplanes,
\end{enumeratea}
\noindent
such that, for any $a\in V$, the natural morphism~\eqref{eq:dag*a} is an isomorphism.
\end{theoreme}

The proof of case \eqref{th:vanishingtwisti} will show more precisely that the coefficients $\ell_k$ are some combinations with integer coefficients of the residues of $\eta_k$ along $D_i$ (\cf Equation~\eqref{eq:integralcoef}).

\begin{remarque}[Analytic version of Theorem \ref{th:vanishingtwist}]
Theorem \ref{th:vanishingtwist} is stated in an algebraic setting in order to be compatible with the setting in \cite{Ribeiro24}. However, as~in Part~\ref{part:Euler}, the proof is mainly done in an analytic setting, where $X$ and $Y$ are compact complex manifolds, both are compactifications of $U$, $Z:=X\moins U$ is a closed analytic subset of $X$, and $D:=Y\moins U$ is a divisor with simple normal crossings. We~start from any object $\cM$ in $\catD^\rb_\hol(\cD_X)$ and the morphism \eqref{eq:dag*a} is written as\vspace*{-3pt}
\[
\cM(!Z)\to\cM(*Z),
\]
where $\cM(*Z)$ is classically denoted by $R\Gamma_{[U]}\cM$, and $\cM(!Z)$ is defined by duality: $\cM(!Z):=\bD\bigl(R\Gamma_{[U]}(\bD\cM)\bigr)$.

The family $\etag^o$ is a family of closed meromorphic one-form on $Y$ with poles along~$D$ that satisfy the assumptions in Theorem \ref{th:vanishingtwist}. Then the conclusion of Theorem~\ref{th:vanishingtwist} holds (with the same proof) in this analytic setting.
\end{remarque}

\begin{remarque}[Extension to mixed twistor $\cD$-modules]
The same theorem holds for mixed twistor $\cD$-modules in the following way (we refer to \cite{Mochizuki11} for this theory and the corresponding notation). Let us assume for the sake of simplicity that $Z$ is a hypersurface $H$ in $X$. Let $\cT$ be a mixed twistor $\cD$-module on $X$ which satisfies $\cT=\cT[*H]$. On the other hand, there exists an admissible mixed twistor structure $\cT_{\eta,a}$ on~$(X,H)$ whose associated $\cD_X$\nobreakdash-module is $(\cO_X(*H),\rd+\sum a_k\eta_k)$. Then there exists a uniquely defined mixed twistor $\cD$-module $(\cT\otimes\cT_{\eta,a})[*H]$ (\cf\cite[Prop.\,11.3.3]{Mochizuki11}). The morphism \eqref{eq:dag*a} reads, with the notation of \loccit,\vspace*{-3pt}
\begin{equation}\label{eq:dag*tw}
(\cT\otimes\cT_{\eta,a})[!H]\to(\cT\otimes\cT_{\eta,a})[*H].
\end{equation}
Since the functor which associates to each twistor $\cD$-module on $X$ its underlying holonomic \DXmodule is conservative, the conclusion of the theorem also applies to~\eqref{eq:dag*tw}.
\end{remarque}

\noindent
\begin{minipage}{\textwidth}
\subsubsection{Applications}\label{subsubsec:appl}
Let $S$ be a smooth complex variety, let $T=\Gm^n$ be the torus of dimension~$n$, and let $U=\mathbb{A}^{\!n}$ be the affine space of dimension $n$. We~fix coordinates $t_1,\dots,t_n$ on $U$, that we induce on $T$.
\begin{enumeratei}
\item
For $\lambda\!\in\!(\CC^*)^n$, set $\cN_\lambda\!=\!\bigl(\cO_T,\rd\!+\!(\sum_ka_k\rd t_k/t_k)\bigr)$, with $\exp(-2\pii a_k)=\lambda_k$.
\item
For $a\in\CC^n$, set $\cN_a=\bigl(\cO_U,\rd+(\sum_ka_k\rd t_k)\bigr)$.
\end{enumeratei}
With the notation above, we have $r=n$.
\end{minipage}

Let $j:T\times S\hto\PP^n\times S$, \resp $j:U\times S\hto\PP^n\times S$, denote the open inclusion. We~consider the closed one-forms $\eta_k=\rd t_k/t_k$ (case \eqref{th:vanishingtwisti}) and $\eta_k=\rd t_k$ (case \eqref{th:vanishingtwistii}). They satisfy the corresponding conditions in Theorem \ref{th:vanishingtwist}: for example, $\rd t_k/t_k$ has a nonzero residue only along $\{t_k=0\}$ and $\{t_k=\infty\}$, which do not intersect, so~that $\#I(\eta_k)=1$.

 Furthermore, in~case~\eqref{th:vanishingtwisti}, the coefficients $\ell_k$, which are linear combinations with integral coefficients of the residues of $\eta_k$ along the components of $D=\PP^n\moins T$, are integers. It~follows that $V^*:=\exp(-2\pi\sfi V)$ is a Zariski-dense open subset of $(\CC^*)^n$ (in~fact, the complement of a finite union of translated codimension-one subtori of~$T$).

\begin{corollaire}[Local vanishing]
Let $\cM_T$ be a holonomic $\cD_{T\times S}$-module, \resp $\cM_U$ a holonomic $\cD_{U\times S}$-module. There exists a Zariski-dense open set $V^*$ of $(\CC^*)^n$, \resp $V$ of~$\CC^n$, which is the complement of a finite union of translated codimension-one subtori of~$T$, \resp the complement of a finite union of affine hyperplanes, such that for any $\lambda\in V^*$, \resp any $a\in V$, the natural morphism (\cf \cite[Th.3.2.16]{H-T-T08})\vspace*{-3pt}
\begin{align*}
\Dj_!(\cM_T\otimes\cN_\lambda)&\to \Dj_*(\cM_T\otimes\cN_\lambda),
\\
\tag*{\resp}
\Dj_!(\cM_U\otimes\cN_a)&\to \Dj_*(\cM_U\otimes\cN_a),
\end{align*}
is an isomorphism.
\end{corollaire}

\begin{proof}
By taking $X=\PP^n\times S$ and $\eta_k=\rd t_k/t_k$, \resp $\rd t_k$, Theorem \ref{th:vanishingtwist} yields the statement of the corollary.
\end{proof}

\begin{remarque}[Other approches in the multiplicative case]
The case when $T=\Gm$ (\ie $n=1$) is proved in \cite{B-G12} by using the theory of $b$-functions. The case where $T=\Gm^n$ ($n\geq1$) should also be obtained from the results in \cite{Maisonobe16} and \cite{Wu21}
\end{remarque}

\begin{corollaire}[Relative vanishing]
Let $p$ denote the projection $T\times S\!\to\! S$ or $U\times S\!\to\! S$. For $\cM$ as in the theorem, there exists a Zariski-dense open subset~$V^*$ of~$(\CC^*)^n$, \resp $V$ of $\CC^n$, which is the complement of a finite union of translated codimension-one subtori of~$T$, \resp the complement of a finite union of affine hyperplanes, such that for any $\lambda\in V^*$, \resp any $a\in V$, the natural morphism
\begin{align*}
\D p_!(\cM_T\otimes\cN_\lambda)&\to \D p_*(\cM_T\otimes\cN_\lambda),
\\
\tag*{\resp}
\D p_!(\cM_U\otimes\cN_a)&\to \D p_*(\cM_U\otimes\cN_a),
\end{align*}
is an isomorphism.
\end{corollaire}

\begin{proof}
Immediate by using that the projection $P:\PP^n\times S\to S$ is proper, so that $\D P_!\simeq \D P_*$, and the isomorphisms $\D p_!=\D P_!\circ \Dj_!$, $\D p_*=\D P_*\circ \Dj_*$.
\end{proof}

\section{A criterion for local vanishing}\label{sec:criterion}
We use the setting of \S\ref{subsec:nota} and the terminology of \S\ref{subsubsec:goodformalstr}. This section is devoted to the proof of the following criterion.

Let $\cM=\cM(*D_1)$ be a meromorphic flat vector bundle on $(Y,D_1)$ which has a good formal structure along the simple normal crossing divisor $D_1$. Let $d$ be the maximal ramification order of $\cM$ along the irreducible components of $D_1$. Let $D$ be a subdivisor of $D_1$. In particular, we have $\cM=\cM(*D)$.

\begin{proposition}\label{prop:criterion}
Assume that the transverse monodromy of the formal regular part of~$\cM$ along the smooth part of~$D$ has no eigenvalue which is a root of unity of order $\leq d$ (\eg if this formal regular part is zero). Then the natural morphism $\cM(!D)\to\cM=\cM(*D)$ is an isomorphism.
\end{proposition}

For each irreducible component $D_i$ ($i\in J_1$) of $D_1$, ($J_1$ parametrizes the components of $D_1$), there exists a (non-ramified) decomposition $\cM|_{\wh D_i^\circ}\simeq \cM|_{\wh D_i^\circ}^\irr\oplus\cM|_{\wh D_i^\circ}^\reg$, where the second term has regular singularities along~$D_i^\circ$: it is the \emph{formal regular part of $\cM$ along $D_i^\circ$}. Note that, by flatness, the characteristic polynomial of the formal monodromy of $\cM|_{\wh D_i^\circ}^\reg$ around $D_i^\circ$ is constant along each connected component of $D_i^\circ$.

The assumption of the proposition is that, for any $i\in I:=I_1\cap J$ (where $J\subset J_1$ parametrizes the components of $D$), the ramification order $d_i$ needed to obtain a formal decomposition as in \S\ref{subsubsec:goodformalstr} is bounded by $d$.

\begin{lemme}
If the transverse formal monodromy of the regular part of $\cM$ along the smooth part of~$D$ has no eigenvalue which is a root of unity of order $\leq d$, then for each $x_o\in D$ and each $\varphi=x^{-\bme}u(x)\in\Phi_{x_o}$, the monodromy of the germ $\cR_{\varphi,x'_o}$ of regular meromorphic flat bundle around any component $D'_i$ of $D$ such that $e_i=0$ has no eigenvalue which is equal to one.
\end{lemme}

\begin{proof}
Let us fix $I_1\subset J_1$, $x_o\in D_{I_1}^\circ$ and $x'_o\in D'_{I_1}$. By~the openness property, for any $i\in I_1$ and any $x'\in D^{\prime\circ}_i\cap\nb(x'_o)$, the regular part of $(\D\rho_{\bmd_{I_1}}^*\cM)_{\wh x'}$ is $\bigoplus_\varphi(\cR_\varphi)_{\wh x'}$, where $\varphi=x^{-\bme}u(x)\in\Phi_{x_o}$ is such that $e_i=0$. The assumption implies that, if~$i\in I_1\cap J$, the monodromy of each such $\cR_\varphi$ around $D'_i$ has no eigenvalue which is equal to one (since the ramified monodromy is the $d_i$-th power of the original one).
\end{proof}

\subsubsection{Reduction of Proposition \ref{prop:criterion} to the elementary case}\label{subsubsec:reduc}
In order to check that $\cM(!D)\to \cM\,(=\cM(*D))$ is an isomorphism, it~is enough to do it locally analytically. Assume that we have proved the assertion for the pullback $\cM'$ of $\cM$ by the multi-cyclic ramification $\rho_{\bmd_{I_1}}:(Y',D)\to(Y,D)$ (we identify $D$ with its pullback), so that we still have $\cM'=\cM'(*D)$. As $\cM$ is a direct summand of the pushforward $\D\rho_{\bmd_{I_1}*}\cM'$ (\cf\eg Example 8.7.31 in the \href{https://perso.pages.math.cnrs.fr/users/claude.sabbah/MHMProject/mhm.html}{\textit{MHM project}}), the natural morphism $\cM(!D)\to\cM$ is a direct summand of an isomorphism, hence is also an isomorphism. We~can thus assume that $\cM$ has a good formal \emph{decomposition} along~$D$.

As~duality commutes with tensoring with $\cO_{\wh x_o}$, it~is enough to check that, for any $x_o\in D$, the morphism $\cM(!D)_{\wh x_o}\to\cM_{\wh x_o}$ is so, and as co-localization $(!D)$ commutes with tensoring with~$\cO_{\wh x_o}$, we~can start from $\cM_{\wh x_o}$, which is thus decomposed as the direct sum of elementary good formal flat bundles. In~such a case, for each exponential factor $\varphi\in\Phi_{x_o}$, $\cR_\varphi$ is an extension of meromorphic flat bundles of \emph{rank one} with a connection having regular singularities along the germ~$D_{1,x_o}$, so~that it is an extension of rank-one such objects, whose monodromy around~$D_i$ is an eigenvalue of that of~$\cR_\varphi$ around $D_i$. We~are thus left with considering the case of rank-one elementary meromorphic connections as in \S\ref{subsubsec:rankone}, with $\varphi\in\Phi_{x_o}$ and the assumption of the proposition, together with the lemma above, imply that, for $i\in J$, if~$e_i=0$, the monodromy around~$D_i$ is not equal to one. Note that, at this step, we~can come back to the convergent setting, since the elementary formal objects are convergent.

\subsubsection{Proof in the elementary case}\label{subsubsec:elem}
We work in the local analytic setting where $Y=\Delta^{\ell_1}\times\Delta^{n-\ell_1}$, where $\Delta$ is a small disc in $\CC^n$, and $D_1=\prod_{i=1}^{\ell_1} x_i=0$ (where $x_1,\dots,x_{\ell_1}$ are the coordinates on $\Delta^{\ell_1}$). With the notation of \S\ref{subsec:nota}, we have set $I_1=\{1,\dots,\ell_1\}$. We~also set $I=\{1,\dots,\ell\}$ with $\ell\leq\ell_1$, corresponding to $D$. We~consider the rank-one meromorphic connection
\[\textstyle
\cN=\bigl(\cO_Y(*D_1),\rd+\rd\varphi+\sum_{i\in I_1}\alpha_i\rd x_i/x_i\bigr),
\]
with
\begin{enumeratea}
\item\label{enum:a}
$\varphi=u/x^{\bme}$, where $u$ is a holomorphic function on $Y$ which does not vanish anywhere on $Y$ and $x^{\bme}=x_1^{e_1}\cdots x_{\ell_1}^{e_{\ell_1}}$, with $e_i\in\NN$,
\item\label{enum:b}
$\alpha_i\in\CC$ is such that, for $i\in I$, $(e_i=0\implies\alpha_i\notin\ZZ)$.
\end{enumeratea}
Recall that $D_i^\circ=D_i\moins\bigcup_{i'\neq i}D_{i'}$. If $e_i=0$, $\cN$ has regular singularity along $D_i^\circ$ and the assumption on $\alpha_i$ means that its monodromy around $D_i^\circ$ is not equal to the identity if $i\in I$. On the other hand, if $e_i\neq0$, then the regular part of $\cN$ along $D_i^\circ$ is zero. We~will prove that $\cN(!D)_{x_o}\to\cN_{x_o}$ is an isomorphism. We~note that $\cN(!D)$ can be obtained as a successive application of the functors~$(!D_i)$ for $i=1,\dots,\ell$. It~is then enough to prove that for each such~$i$, the natural morphism $\cN(!D_i)\to\cN$ is an isomorphism: indeed, let us check by induction on~$k$ that $\cN(!(D_{\leq k}))\to \cN$ is then an isomorphism; we write this morphism as the composition
\[
\cN(!(D_{\leq k-1}))(!D_k)\to \cN(!D_k)\to\cN,
\]
and we see by induction that both morphisms are isomorphisms.

It is then enough to check that the jumping indices of the $V$-filtration along each component $D_i$ with $i\in I$ are not integers (\cf\eg\cite[(4.6.6)]{M-S86b}). Set
\[
I_{1,\bme}=\{i\in\{1,\dots,\ell_1\}\mid e_i\neq0\}\quand\nu=\prod_{i\in I_1}x_i^{-m_i}\text{ with $m_i>\alpha_i$}.
\]
Then one checks that $\nu$ is a $\cD_Y$-generator of $\cN$ and, on the other hand, that its Bernstein polynomial along $D_i$ is
\[
\begin{cases}
s-(\alpha_i-m_i)&\text{if }i\notin I_{1,\bme},\\
1&\text{otherwise}.
\end{cases}
\]
The $V$-filtration of $\cN$ along $D_i$ is then that generated by $\nu$, it is constant if $i\in I_{1,\bme}$ and has jumping indices in $\alpha_i+\ZZ$, which is $\neq\ZZ$ if $i\in I\moins I_{1,\bme}$.\qed

\section{Proof of Theorem \ref{th:vanishingtwist}}
The proof is by induction on the dimension of the support of $\cM_U$, the case where this dimension is zero being straightforward.

\begin{lemme}\label{lem:2}
It is enough to prove the theorem when $\cM_U$ is a good meromorphic flat vector bundle on $(Y,D_1)$ for some simple normal crossing divisor $D_1$ containing a subdivisor~$D$ which is the polar locus of the rational closed forms $\eta_1^o,\dots,\eta_r^o$.
\end{lemme}

Before starting the proof, we need to check stability of the conditions \eqref{th:vanishingtwisti} and~\eqref{th:vanishingtwistii} in Theorem \ref{th:vanishingtwist} under pullback by a suitable proper morphism. We~start from a good pair $(Y,D)$, with $D=\bigcup_{i\in J}D_i$ and $D_i$ smooth. Let $S$ be a closed analytic subset of $Y$ (this will later be the support of $\cM$) and let $S^o$ be a Zariski-dense smooth open subset of $S$. Let $\pi:S'\to Y$ be a proper morphism such that~$S'$ is smooth and
\begin{enumeratea}
\item\label{enum:a}
the image of $\pi$ is $S$ and $\pi:S'\to S$ induces an isomorphism above $S^o$,
\item\label{enum:b}
the pullback $D'_1:=\pi^{-1}(D\cup(S\moins S^o))$ is a simple normal crossing divisor in~$S'$.
\end{enumeratea}

We then set $D'=\pi^{-1}(D)=\bigcup_{i'\in J'}D'_{i'}$. For any $i'\in J'$, there exists a maximal set $I(i')\subset J$ such that $\pi(D'_{i'})\subset D_{I(i')}$. For $I'\subset J'$, we set $I(I')=\bigcup_{i'\in I'}I(i')$, so that $\pi(D'_{I'})\subset D_{I(I')}$.

\begin{lemme}\label{lem:pullbackgoodwild}
Let $\eta^o$ be a closed one-form on $U$. Assume that its extension $\eta$ with respect to a good compactification $(Y,D)$ of~$U$ is good wild. Let $\pi:S'\to Y$ be as above. Then $\pi^*\eta$ is good wild with respect to $(S',D')$. Furthermore,
\begin{enumeratei}
\item\label{lem:pullbackgoodwild1}
if $\eta$ is logarithmic and $I$-non resonant for some $I\subset J$, then $\pi^*\eta$ is logarithmic with poles along $D'$, and is $I'$-non resonant for any $I'\subset J'$ such that $I(I')\subset I$;
\item\label{lem:pullbackgoodwild2}
if $\eta$ has a pole of order $\geq2$ along $D_i$ for some $i\in J$, then $\pi^*\eta$ has a pole of order $\geq2$ along any $D'_{i'}\subset \pi^{-1}(D_i)$.
\end{enumeratei}
\end{lemme}

\begin{proof}
Let us fix $x_o\in D\cap S$ and let $x'_o\in\pi^{-1}(x_o)\subset D'_1$. There exist local coordinates $(x'_1,\dots,x'_k,z'_{k+1},\dots,z'_m)$ on a neighborhood of~$x'_o$ in~$S'$ and local coordinates $(x_1,\dots,x_\ell,y_{\ell+1},\dots,y_n)$ on a neighborhood of~$x_o$ in~$X$ such that, on these neighborhoods, $D=\{x_1\cdots x_\ell=0\}$, \hbox{$D'=\{x'_1\cdots x'_k=0\}=\pi^{-1}(D)$}, and $x_i\circ\pi$ is a monomial in $x'$ times a unit. Furthermore, $\eta$ reads $\eta=\rd(u(x,y)x^{\bme})+\omega$, where~$\omega$ is a closed logarithmic one-form with poles along $D$ and $u(x,y)$ is a local unit. With this expression, it~is straightforward to check that good wildness is preserved by~$\pi^*$. Furthermore, the components of $D'_1$ which are not in $D'$ are not polar components of $\pi^*\eta$.

Assume that $x'_o$ is general on $D'$. Then $k=1$. We~write $x_i\circ\pi=x_1^{\prime \nu_i}u_i(x'_1,z')$, with $\nu_i\geq1$ and $u_i$ is a local unit. Writing $\omega=\sum_{i=1}^\ell \omega_i(x,y)\rd x_i/x_i$, the pullback $\pi^*\eta$ reads $\rd((\pi^*u) x_1^{\prime-\sum \nu_ie_i})+\bigl(\sum_{i=1}^\ell \nu_i\omega_i\circ\pi\bigr)\rd x'_1/x'_1$. In~case \eqref{lem:pullbackgoodwild2}, we~have $e_i\geq0$ for any $i=1,\dots,\ell$ and $e_i>0$ for some such $i$, so $\sum \nu_ie_i>0$, hence the assertion for~\eqref{lem:pullbackgoodwild2}. In~case \eqref{lem:pullbackgoodwild1}, we~have $\bme=0$ and $\sum_i\nu_i\cdot(\omega_i\circ\pi)(0,0)\neq0$, and the assertion for~\eqref{lem:pullbackgoodwild1} also follows.
\end{proof}

\begin{proof}[Proof of Lemma \ref{lem:2}]
We first notice that it is enough to prove the assertion of the theorem for holonomic $\cD_U$-modules $\cM_U$: by induction on the amplitude of the complex, it~also holds for any object of $\catD^\rb_\hol(\cD_U)$. We~thus assume that $\cM_U$ is a holonomic $\cD_U$\nobreakdash-mod\-ule. We~consider the extension $\etag$ to $Y$ of the family $\etag^o$.

Let $S$ denote the closure in $Y$ of the support of $\cM_U$, which is a subvariety of~$Y$, let $S^o\subset S\cap U$ be a Zariski-dense open subset of $S$ on which the restriction of $\cM_U$ is smooth, and let $\pi:S'\to Y$ be a resolution of singularities of $S$ satisfying \eqref{enum:a} and~\eqref{enum:b} above. We~can also regard $S^o$ as a Zariski-dense open subset of $S'$. By~the adjunction isomorphism of \cite[Cor.\,3.2.15]{H-T-T08}, we~have a natural morphism
\[
\D\pi_! \D\pi^*(\Dj_?\cM_{U,\etag^o,a})[-\codim S]\to \Dj_?\cM_{U,\etag^o,a},\quad ?=!,*.
\]
Denoting by $j':U':=\pi^{-1}(U)\hto S'$ the inclusion, we~find that
\[
\D\pi^*(\Dj_?\cM_{U,\etag^o,a})\simeq \Dj'_? \D\pi^*(\cM_{U,\etag^o,a})\quand\D\pi^*(\cM_{U,\etag^o,a})\simeq (\D\pi^*\cM_U)_{\pi^*\etag^o,a}.
\]
By the previous lemma, the family $\pi^*\etag^o$ of one-forms on $S'\moins D'$ satisfy the properties \eqref{th:vanishingtwisti} and~\eqref{th:vanishingtwistii} of Theorem \ref{th:vanishingtwist} relative to $D'$.

On the other hand, since $\pi$ is proper, we~have $\D\pi_*=\D\pi_!$. Therefore, if we have proved that
\[
\Dj'_!(\D\pi^*\cM_U)_{\pi^*\etag^o,a}\to \Dj'_*(\D\pi^*\cM_U)_{\pi^*\etag^o,a}
\]
is an isomorphism for $a\in V$, then \eqref{eq:dag*a} is an isomorphism for $a\in V$. 

By induction on the dimension of the support, it is enough to prove the above isomorphism for the localization $(\D\pi^*\cM_U)(*(D'\cap U'))$ instead of  $(\D\pi^*\cM_U)$, and we note that $(\D\pi^*\cM_U)(*(D'\cap U'))$ is a meromorphic flat bundle on $S^o$.

Therefore, up to changing notation, we can now assume that $\cM:=\Dj_*\cM_U$ is a meromorphic flat vector bundle on $Y$ with a pole divisor $D_1$ on $Y$ being a simple normal crossing divisor containing $D=Y\moins U$. We~thus have $\cM=\cM(*D_1)$.

By the Kedlaya-Mochizuki theorem (\cite[Th.\,8.1.3]{Kedlaya10}, \cite{Mochizuki08}), there exists a good compactification $(Y',D'_1)$ of $Y\moins D_1$ dominating $(Y,D_1)$ such that the pullback $\cM'(*D'_1)$ of $\cM(*D_1)$ has a good formal structure along~$D'_1$ (\ie is a good meromorphic flat bundle on $(Y',D'_1)$). Applying the previous argument to $\pi:(Y',D'_1)\to(Y,D_1)$, letting $D'\subset D'_1$ be the pullback of $D$ and $U'=Y'\moins D'$ leads to the conclusion of the lemma.
\end{proof}

\subsubsection{Proof of Theorem \ref{th:vanishingtwist} in case \eqref{th:vanishingtwisti}}\label{subsubsec:vanishingtwisti}

By Lemma \ref{lem:2}, we are reduced to considering the case where $(Y,D)$ is a good compactification of~$U$ satisfying Condition~\eqref{th:vanishingtwisti} with respect to $\eta^o_1,\dots,\eta^o_r$, and $\cM$ is a good meromorphic flat bundle on $(Y,D_1)$ with $D_1\supset D$.

We claim that there exists a finite set of affine forms $L_{\bmell,c}(a)=\sum_k\ell_k a_k+c$, with $\bmell,c$ belonging to a finite subset of $(\CC^r\moins\{0\})\times\CC$) such that, if $L_{\bmell,c}(a)\notin\ZZ$ for any $(\bmell,c)$, the assumption of Proposition \ref{prop:criterion} is fulfilled by $\cM_{\etag,a}$.

Indeed, for each $i\in J$, let $A_i\subset\CC$ be a finite set such that the eigenvalues of the formal monodromy of $\cM$ along $D_i^\circ$ belong to $\exp(-2\pi\sfi A_i)$. Then those of $\cM_{\etag,a}$ along~$D_i^\circ$ take the form \hbox{$\exp\bigl(-2\pi\sfi(\alpha+\sum_{k=1}^ra_k\res_{k,i})\bigr)$} for $\alpha\in A_i$. The condition on~$a$, so that the assumptions of Proposition \ref{prop:criterion} is fulfilled for $\cM_{\etag,a}$, reads
\begin{equation}\label{eq:integralcoef}
\alpha+\sum_{k=1}^ra_k\res_{k,i}\notin\tfrac1d\ZZ,\qquad\forall i\in J\text{ and }\forall\alpha\in A_i.
\end{equation}
This defines the desired family of affine forms $L_{\bmell,c}(a)$ on $\CC^r$ provided that, for any $i\in J$, the linear form $L_i:a\mto\sum_{k=1}^ra_k\res_{k,i}$ is not identically zero. This property follows from the non-vanishing property of some $\res_{k,i}$ ($k\in\{1,\dots,r\}$), which in turn follows from the non-resonance condition on $\eta_1^o,\dots,\eta_r^o$.\qed

\subsubsection{Proof of Theorem \ref{th:vanishingtwist} in case \eqref{th:vanishingtwistii}}\label{subsubsec:vanishingtwistii}

In this case, we will also use (at the end of the proof) the criterion of Proposition \ref{prop:criterion} in its case where the formal regular part is zero. We~start with the same setting as in the proof for the case  \eqref{th:vanishingtwisti} in Section~\ref{subsubsec:vanishingtwisti}, but we assume that $(Y,D)$ satisfies Condition \eqref{th:vanishingtwistii} with respect to $\etag^o$.

For any $x_o\in D$ and any $a\in\CC^r$, the formal connection $\D\rho_{\bmd_{I_1}}^*\cM_{\etag,a,\wh x_o}$ is a successive extension of rank-one connections with connection forms (using the notation in \S\ref{subsubsec:elem}, with coordinates $x',y$ centered at $x'_o$ in the ramified space $\rho_{\bmd_{I_1}}:\Delta^{\prime\ell_1}\times\Delta^{n-\ell_1}\to\Delta^{\ell_1}\times\Delta^{n-\ell_1}$) given by the formula
\begin{equation}\label{eq:Phioa}
\rd\varphi+\rho_{\bmd_{I_1}}^*\sum_i\alpha_i\rd x_i/x_i+\rho_{\bmd_{I_1}}^*\sum_{k=1}^ra_k\bigl(\rd(u_k(x,y)x^{-\bme(\eta_k)})+\omega_k\bigr),
\end{equation}
with $\varphi=u_\varphi(x',y)x^{\prime-\bme(\varphi)}\in\Phi_{x_o}$, $\bme(\varphi)\in\NN^{\ell_1}$, $u_\varphi$ and $u_k$ are identically zero or local units, and~$\omega_k$ are logarithmic. By~convention, we set $\bme(\varphi)=0$ if $u_\varphi\equiv0$. As in Section \ref{subsubsec:reduc}, we can assume that $\cM_{\etag,a,x_o}$ has rank one with connection form \eqref{eq:Phioa}.

On the other hand, our assumption on the family $\etag$ implies that at lease one $u_k$ is nonzero. More precisely, we~write $\rho_{\bmd_{I_1}}^*\!(u_k(x,y)x^{-\bme(\eta_k)})=u'_k(x',y)x^{\prime-\bme'(\eta_k)}$. We~can assume that the subset $J$ of $J_1=\{1,\dots,\ell_1\}$ corresponding to the components of $D\subset D_1$ is $J=\{1,\dots,\ell\}$. Then the $i$-th components of $\bme(\eta_k)$ and $\bme'(\eta_k)$ vanish if $i\in\{\ell+1,\dots,\ell_1\}$ since $\eta_k$ has poles along $D$ only. Furthermore, by Assumption~\eqref{th:vanishingtwistii} of the theorem, for each $i\in\{1,\dots,\ell\}$, there exists $k$ such that $u_k$ is invertible and \hbox{$e_i(\eta_k)\geq1$}. 

Note that the condition \eqref{enum:a} of being elementary as in \S\ref{subsubsec:elem}, which holds by assumption if $a=0$, may fail for this form for generic $a$, because the meromorphic function $u_\varphi(x',y)x^{\prime-\bme(\varphi)}+\sum_ka_ku'_k(x',y)x^{\prime-\bme'(\eta_k)}$ is not in general of the form $u(x',y)/x^{\prime\bme}$ with $u(x',y)$ invertible. We~will overcome this difficulty by a suitable local proper modification of $\Delta^{\prime\ell_1}\times\Delta^{n-\ell_1}$.

We define the Newton polyhedron $\NP_{x_o}(\varphi,\etag)$ as the boundary of the convex hull of the union of the octants $-\bme(\varphi)+\NN^{\ell_1}$ and $-\bme'(\eta_k)+\NN^{\ell_1}$ ($k=1,\dots,r$) for which the corresponding coefficient~$u'_k$ is invertible (\ie not identically zero). On~the other hand, for each $a\in\CC^r$, one defines in the usual way the Newton polyhedron \hbox{$\NP\bigl(\varphi+\sum_ka_k(u'_kx^{\prime-\bme'(\eta_k)})\bigr)$} as the convex hull in $\RR^{\ell_1}$ of the octants generated by the exponents of occurring with a non-zero coefficient in the expansion of this meromorphic function. Both polyhedra have vertices in $(-\NN)^{\ell_1}$. By~construction we have the inclusion
\[\textstyle
\NP\bigl(\varphi+\sum_ka_k(u'_kx^{\prime-\bme'(\eta_k)})\bigr)\subset \NP_{x_o}(\varphi,\etag).
\]
We note that, if $\ell<\ell_1$, then both polyhedra have at most one vertex in $\RR^{\ell_1}\moins\RR^\ell$, which then must be $-\bme(\varphi)$ for both.

For a vertex $-\bme$ of $\NP_{x_o}(\varphi,\etag)$, let us consider the subset $A_{\bme}$ of~$\CC^r$ defined by the equation
\[
\delta_{\bme(\varphi),\bme}u_\varphi(0,0)+\sum_{k\mid\bme'(\eta_k)=\bme}a_ku'_k(0,0)=0,
\]
where $\delta$ is the Kronecker index. Then $A_{\bme}=\emptyset$ (which is in particular the case if $-\bme=-\bme(\varphi)\notin\NN^\ell$) or $A_{\bme}$ is an affine hyperplane of $\CC^r$.

\begin{lemme}
If
\[
a\in V(x_o):=\CC^r\moins \hspace*{-3mm}\bigcup_{\substack{-\bme\in\NN^\ell\,\text{vertex}\\\text{of } \NP_{x_o}(\varphi,\etag)}} \hspace*{-5mm}A_{\bme},
\]
then the following equality holds:
\[\textstyle
\NP\bigl(\varphi+\sum_ka_k(u'_kx^{\prime-\bme'(\eta_k)})\bigr)=\NP_{x_o}(\varphi,\etag).
\]
\end{lemme}

\begin{proof}
By definition, the condition $a\in V(x_o)$ means that, at each vertex $-\bme\in\NN^\ell$ of $\NP_{x_o}(\varphi,\etag)$, the coefficient of $x^{-\prime\bme}$ in $\varphi+\sum_ka_k(u'_kx^{\prime-\bme'(\eta_k)})$ does not vanish at $(0,0)$. It~follows that it is invertible in some neighborhood of $(0,0)$ and both polyhedra have the same vertices in $(-\NN)^\ell$.
\begin{itemize}
\item
If $\ell=\ell_1$, the equality of Newton polyhedra follows.
\item
If $\ell<\ell_1$, then both polyhedra have at most one vertex in $(-\NN)^{\ell_1}\moins(-\NN)^\ell$, which is then $-\bme(\varphi)$ for both. Then both polyhedra have the same vertices and are thus equal.\qedhere
\end{itemize}
\end{proof}

Furthermore, by openness of the good formal structure (\cf\S\ref{subsubsec:goodformalstr}), this $V(x_o)$ satisfies the same property for any $x$ in a small neighborhood of $x_o$. By~compactness of~$X$, we~obtain a Zariski-dense open subset $V\subset\CC^r$, which is the complement of a finite number of affine hyperplanes, such that the previous equality of Newton polyhedra holds at each $x_o\in D$.

We note that, by Assumption \eqref{th:vanishingtwistii} in Theorem \ref{th:vanishingtwist}, for each $i\in\{1,\dots,\ell\}$, there exists a vertex of $\NP_{x_o}(\etag)$ whose $i$-th coordinate is $\leq-1$. Then the same holds for $\NP_{x_o}(\varphi,\etag)$ and thus, if $a\in V$, this holds for $\NP\bigl(\varphi+\sum_ka_k(u'_kx^{\prime-\bme'(\eta_k)})\bigr)$, meaning that $\varphi+\sum_ka_k(u'_kx^{\prime-\bme'(\eta_k)})$ has a pole on each component of $D$.

We define a local toric proper modification $\pi:(Z,D'_Z)\to(\nb(x'_o),D')$, where $(Z,D'_Z)$ is good, by refining the dual fan of the Newton polyhedron $\NP_{x_o}(\varphi,\etag)$, in order to obtain a smooth fan subdividing the first octant of~$\RR^\ell$. We~set $D_Z=\pi^{-1}(D)=\{\prod_{i=1}^\ell x'_i\circ\pi=0\}$. By~construction, in each chart of $Z$ corresponding to a cone of this fan, the pullback by~$\pi^*$ of \eqref{eq:Phioa}, for $a\in V$, is good wild.

Furthermore, by the observation above, this pullback has a pole of order at least two along each irreducible component of~$D_Z$. We~can now apply Proposition \ref{prop:criterion} at each regular point of~$D_Z$ since the regular part is zero, so the condition in this proposition is empty. By~Lem\-ma~\ref{lem:1}, we~conclude that the isomorphism \eqref{eq:dag*a} holds for the connection \hbox{defined} by~\eqref{eq:Phioa}. It~therefore holds for $\rho_{\bmd_I}^*\cM_{\etag,a,\wh x_o}$, which is a direct sum of such objects. By~the same argument as in \S\ref{subsubsec:reduc}, we~deduce that it holds for $\cM_{\etag,a,\wh x_o}$. The property that~\eqref{eq:dag*a} is an isomorphism holds if and only if it holds formally at each~$\wh x_o$. Therefore, this concludes the proof of Theorem \ref{th:vanishingtwist} in case~\eqref{th:vanishingtwistii}.\qed

\part{Laplace transform of a Stokes-filtered constructible sheaf of exponential type}\label{part:Laplace}

\section{Introduction to Part \ref{part:Laplace}}

\subsubsection{Statement of the theorem}
The Laplace transformation $F_\pm$ with kernel $E^{\pm t\tau}:=(\cO_{\Afu_t\times\Afu_\tau},\rd\pm\rd(\tau t))$ induces an equivalence between the category of algebraic holonomic $\cD_{\Afu_t}$-modules $N$ on the affine line $\Afu_t$ with coordinate $t$, and that of algebraic holonomic $\cD_{\Afu_\tau}$-modules $M$, with quasi-inverse $F_\mp$. Let us consider the projections
\[
\xymatrix@R=.5cm{
&\Afu_\tau\times\Afu_t\ar[dl]_{q}\ar[dr]^{p}&\\
\Afu_\tau&&\Afu_t
}
\]
so that, for $(N,\nabla)\in\Mod(\cD_{\Afu_t})$ and $(M,\nabla)\in\Mod(\cD_{\Afu_\tau})$, we~have
\begin{align*}
F_\pm(N)&=\D q_*(\D p^*N,\D p^*\nabla\pm\rd(\tau t)),\\
F_\mp(M)&=\D p_*(\D q^*M,\D q^*\nabla\mp\rd(\tau t)).
\end{align*}
It is known (\cf\eg\cite{Malgrange91,K-K-P08}) that this equivalence induces an equivalence between the full subcategories
\begin{itemize}
\item
$\Mod_{\holreg}(\cD_{\Afu_t})$ consisting of regular holonomic $\cD_{\Afu_t}$\nobreakdash-modules $N$, and
\item
$\Mod_{\holexp}(\cD_{\Afu_\tau})$ consisting of algebraic holonomic $\cD_{\Afu_\tau}$\nobreakdash-modules $M$ which have a regular singularity at $\tau=0$, an irregular one of exponential type at $\tau=\infty$, and no other singularity at finite distance.
\end{itemize}
Furthermore, it specializes as an equivalence between the respective sub-categories
\begin{itemize}
\item
$\Mod_{\holreg}^0(\cD_{\Afu_t})$, whose objects $N$ have vanishing global de~Rham hypercohomology $\bR\Gamma(\Afu_t,\pDR(N))$, and
\item
$\Mod_{\holexp}(\cD_{\Afu_\tau},*0)$ whose objects $M$ satisfy $M\simeq M(*0)$. 
\end{itemize}
As in \cite{K-K-P08,Y-Z24}, we~will focus on the latter equivalence. The analytic restriction functor
\[
\Mod_{\holexp}(\cD_{\Afu_\tau},*0)\xrightarrow[\ts\sim]{~\ts i_\infty^{-1}~}\mathsf{Mero}(\infty,\exp)
\]
to the category $\mathsf{Mero}(\infty,\exp)$ of analytic germs at $\infty\in\PP^1_\tau$ of meromorphic connections with an irregular singularity of exponential type, is an equivalence, with quasi-inverse functor being Deligne's regular meromorphic extension at $\tau=0$. Composing~$F_\pm$ with this restriction functor yields a pair $(F^\infty_\pm,F^\infty_\mp)$ of quasi-inverse functors (\cf Diagram \eqref{eq:main**} below).

For an object $N$ of $\Mod_\holreg(\cD_{\Afu_t})$, the analytic de~Rham complex $\cF=\pDR^\an(N)$ is known to be a $\CC$-perverse complex on~$\CC_t:=\Afuan_t$. From the standard algebraic/analytic comparison theorem $\bR\Gamma(\Afu_t,\pDR(N))\simeq\bR\Gamma(\CC_t,\pDR^\an(N))$, we~deduce that $\pDR(N)$ has zero global hypercohomology if and only if $\pDR^\an(N)$ has zero global hypercohomology, and this is equivalent to the property that $\pDR^\an(N)$ is a constructible sheaf~$\cF$ placed in degree~$-1$, having singularities on a finite subset $C\subset\CC_t$, with vanishing global hypercohomology (\cf \cite{K-K-P08}). We~denote this category as $\Perv^0(\CC_t,C)$, which can be defined with any field of coefficients $\bmk$, and then denoted by $\Perv^0(\CC_t,\bmk,C)$.

On the other hand, the perverse complex $\pDR^\an(M)$ on $\CC_\tau$ can be equipped with a Stokes structure of exponential type at infinity, by means of the functor $(\pDR_{\leq\cbbullet},\pDR_{<\cbbullet})$ (the moderate and rapid decay de~Rham complexes twisted by an exponential factor, \cf \cite[Cor.\,5.4]{Bibi10}), with exponential factors of the form~$c\tau$ for~$c$ belonging to a finite subset $C\subset\CC$. We~denote by $(\cG,\cG_\bbullet)$ this Stokes-$\CC$-perverse sheaf (we~refer to \cite{Bibi10} for this notion), which is an object living on the real blow-up space~$\wt\PP_\tau^1$ of the projective line $\PP^1_\tau$ at infinity, and by $\PervSt(\wt\PP^1_\tau,C)$ corresponding category. Our assumption that $M\simeq M(*0)$ is equivalent to $\pDR^\an(M)\simeq\bR j_{0*}j_0^{-1}\,\pDR^\an(M)$, where $j_0:\CC^*_\tau\hto\CC_\tau$ denotes the open inclusion. We~denote by $\PervSt(\wt\PP^1_\tau,C,*0)$ the full subcategory of $\PervSt(\wt\PP^1_\tau,C)$ consisting of objects $(\cG,\cG_\bbullet)$ satisfying $\cG\simeq\bR j_{0*}j_0^{-1}\cG$. More precisely, let $\varpi:\wt\PP^1_\tau\to\PP^1_\tau$ denote the real blowing up of the projective line $\PP^1_\tau$ at $\tau=\infty$ and let us consider the complementary inclusions\vspace*{-3pt}
\[
\Afu_\tau\Hto{\wtj_\infty}\wt\PP^1_\tau\Hfrom{\wti_\infty}S^1_\tau=\varpi^{-1}(\infty)=\wt\PP^1_\tau\moins\Afu_\tau.
\]
Then $\cH^{-1}\cG|_{\CC^*_\tau}$ is a local system as well as its pushforward by $\wtj_\infty$, so that the restriction of the latter by $\wti_\infty$, that we denote by $\cL$, is a $\CC$-local system on $S^1_\tau$. The latter is equipped with a Stokes filtration of exponential type, thereby defining a Stokes-filtered $\CC$-local system $(\cL,\cL_\bbullet)$ on $S^1_\tau$ indexed by a finite set $C\subset\CC$. We~denote by $\Sto(S^1_\tau,C)$ the corresponding category. For any field of coefficients~$\bmk$, the categories $\PervSt(\wt\PP^1_\tau,\bmk,C)$ and $\Sto(S^1_\tau,\bmk,C)$ are also defined. One easily obtains the equivalence (\cf Section \ref{subsec:equivPervSt})\vspace*{-3pt}
\begin{equation}\label{eq:equivPervSt}
\PervSt(\wt\PP^1_\tau,*0)\xrightarrow[\ts\sim]{~\ts\wti_\infty^{-1}[-1]~}\Sto(S^1_\tau),
\end{equation}
which can be decorated with the field of coefficients $\bmk$ and the set $C$ corresponding to exponential factors.

The Riemann-Hilbert correspondence and the RH-Birkhoff-Deligne-Malgrange correspondence\vspace*{-3pt}
\begin{equation}\label{eq:equivcat}
\begin{aligned}
\Mod_{\holreg}^0(\cD_{\Afu_t})&\xrightarrow[\ts\sim]{\ts\pDR^\an}\Perv^0(\CC_t)\\
\Mod_{\holexp}(\cD_{\Afu_\tau},*0)&\xrightarrow[\ts\sim]{\ts(\pDR_{\leq\cbbullet},\pDR_{<\cbbullet})}\PervSt(\wt\PP^1_\tau,*0)
\end{aligned}
\end{equation}
are well-known (\cf\eg\cite[Chap.\,5]{Bibi10} for the second line). We~can also restrict the above categories by specifying the singularity set $C$ (first line) or the index set~$C$ of exponential factors (second line).

\begin{theoreme}\label{th:main}
For any field $\bmk$, there exist pairs of quasi-inverse functors $(F^\top_{\pm},F^\top_{\mp})$ and $(F^{\infty,\top}_{\pm},F^{\infty,\top}_{\mp})$ and a commutative diagram:\vspace*{-5pt}
\begin{starequation}\label{eq:main*}
\begin{array}{c}
\xymatrix@C=1.5cm{
\Perv^0(\CC_t,\bmk,C)\ar@<.5ex>[r]^-{F^\top_{\pm}}\ar@/_1pc/@<.5ex>[dr]^(.65){F^{\infty,\top}_{\pm}}&\ar@<.5ex>[l]^-{F^\top_{\mp}}\PervSt(\wt\PP^1_\tau,\bmk,C,*0)\ar[d]^{\wti_\infty^{-1}[-1]}_\wr\\
&\Sto(S^1_\tau,\bmk,C)\ar@/^1pc/@<.5ex>[ul]^-{F^{\infty,\top}_{\mp}}
}
\end{array}
\end{starequation}%
which, when $\bmk=\CC$, is compatible, via the Riemann-Hilbert (and RH-Birkhoff-Deligne-Malgrange) correspondence, with the pairs\vspace*{-5pt}
\begin{starstarequation}\label{eq:main**}
\begin{array}{c}
\xymatrix@C=1.5cm{
\Mod_{\holreg}^0(\cD_{\Afu_t},C)\ar@<.5ex>[r]^-{F_{\pm}}\ar@/_1pc/@<.5ex>[dr]^(.6){F^{\infty}_{\pm}}&\ar@<.5ex>[l]^-{F_{\mp}}\Mod_{\holexp}(\cD_{\Afu_\tau},*0,C)\ar[d]_\wr^{i_\infty^{-1}}\\
&\mathsf{Mero}(\infty,C)\ar@/^1pc/@<.5ex>[ul]^-{F^{\infty}_{\mp}}
}
\end{array}
\end{starstarequation}%
via the equivalences \eqref{eq:equivcat}.
\end{theoreme}

In \cite{Bibi10}, the correspondence $\cF\mto(\cG,\cG_\bbullet)$, called \emph{topological Laplace transformation}, has been made explicit for any perverse complex $\cF$ with coefficients in a field~$\bmk$, and it has been shown that, when $\bmk=\CC$, it is compatible via the Riemann-Hilbert (and RH-Birkhoff-Deligne-Malgrange) correspondence with the Laplace transformation of regular holonomic $\cD_{\Afu_t}$-modules. Making explicit the inverse correspondence and its compatibility with \eqref{eq:equivcat} has been motivated by the work \cite{Y-Z24}. This text can then be regarded as a complement to \cite[Chap.\,7]{Bibi10}. Let us also mention that the construction of the pair $(F^{\infty,\top}_{\pm},F^{\infty,\top}_{\mp})$ is a very special case of the more general construction done by T.\,Mochizuki in \cite{Mochizuki18}, and was initiated by B.\,Malgrange in \cite{Malgrange91}.

\subsubsection{Comparison with the approach of \cite{Y-Z24}}
In \cite{Y-Z24}, Tony Yue Yu and Shaowu Zhang prove a formula for the inverse topological Laplace transformation $(\cG,\cG_\bbullet)\mto\cF$ under the global vanishing assumption that we also adopt here. Let us emphasize that the main nice idea of \cite{Y-Z24} is to use co-Stokes structures, that are easier to handle. We~propose here an approach of their formula that we show to be compatible with the Riemann-Hilbert correspondence (in~\cite{Y-Z24}, this compatibility is obtained in an indirect way). We~make full use of the notion of Stokes structure in dimension $>1$ (namely, in~dimension two).

In \loccit, the authors first replace the Stokes-filtered local system $(\cL,\cL_\bbullet)$ on~$S^1_\tau$ with a co-Stokes-filtered local system $(\cL,\cL_<)$ on~$S^1_\tau$ (\ie by replacing the order~$\leq$ with the strict order $<$)---both notions are known to be equivalent---, and then ``thicken'' this object to produce, for any given $t\in\CC^*_t$, a similar object on $\CC^*_\tau$. This is motivated by the fact that the topological Laplace transform of a perverse complex on $\CC_t$ having no global hypercohomology (equivalently, of a constructible sheaf placed in degree one on $\CC_t$ having no global cohomology, \cf\cite{K-K-P08}) can be defined in order to produce such a ``thickened'' object. The drawback of this approach is that the Stokes-filtered de~Rham complex attached to a free $\CC[\tau,\tau^{-1}]$-module with connection having a regular singularity at $\tau=0$ and an irregular one with exponential type at $\tau=\infty$ produces a ``non-thickened'' Stokes constructible sheaf. In~order to obtain a Riemann-Hilbert correspondence, the authors of \cite{Y-Z24} make use of the Riemann-Hilbert correspondence for the corresponding regular holonomic module obtained by Laplace transform, and the property that Laplace transformation has an inverse transformation both at the topological level (a~property that they prove) and at the algebraic/analytic level (a~property much easier to prove).

In the presentation we give in Section \ref{sec:RH} of this note, the Riemann-Hilbert correspondence is obtained independently of the $\pm$ involutivity of the topological Laplace transformation. For that purpose, we~make full use of the notion of a Stokes-filtered local system along a divisor with normal crossings.

\subsubsection{Basic lemmas}
We will make use of the following basic lemmas.

\begin{lemme}\label{lem:vanishing}
Let $I$ be a non-empty semi-closed interval in $\RR$ and let $\ov\Delta_<$ be a non-empty closed disc with a closed interval deleted in its boundary. Then, for any field~$\bmk$, we~have $H_\rc^*(I,\bmk)=0$ and $H_\rc^*(\ov\Delta_<,\bmk)=0$.\qed
\end{lemme}

\begin{lemme}\label{lem:recol}
Let $j:U\hto Y$, \resp $i:F\to Y$, be complementary open, \resp closed, inclusion of topological spaces. Given a sheaf $\cH_U$ on $U$ and a subsheaf $\cH_F\subset i^{-1}j_*\cH_U$ on~$F$, there exists a unique subsheaf $\cH$ on $Y$ such that $j^{-1}\cH\!=\!\cH_U$ and \hbox{$i^{-1}\cH\!=\!\cH_F$}.\qed
\end{lemme}

\Subsubsection{Dramatis personae}
\begin{itemize}
\item
The holonomic $\Cltau$-module $(M,\nabla)$, its analytic germ $\cM$ at $\tau=\infty$, which is a finite-dimensional $\CC\{\tau'\}[\tau^{\prime-1}$-vector space with connection (with $\tau':=1/\tau$), and its Stokes-filtered local system $(\cL,\cL_\bbullet)$ on $S^1_\tau$ at $\tau=\infty$ indexed by $C\tau$ with $C\subset\CC$. Here, $S^1_\tau$ is the fiber $\varpi^{-1}(\infty)$ of the real blowing up $\varpi:\wt\PP^1_\tau\to\PP^1_\tau$ of $\PP^1_\tau$ at $\tau=\infty$.
\item
The exponentially twisted $\CC[\tau,t]\langle\partial_\tau,\partial_t\rangle$-module $(\D q^*M,\nabla-\rd(\tau t))$ and its germ~$\scrM$ along $D_\infty=\{\infty\}\times\CC_t$, which is a germ along $D_\infty$ of meromorphic flat bundle with poles along $D_\infty$.
\item
The Stokes-filtered local system $(\scrL,\scrL_\bbullet)$ on $S^1_\tau\times\CC_t$ indexed by \hbox{$\Phi\!=\!\{\varphi_c\mid c\!\in\! C\}$} with $\varphi_c(\tau,t)=\tau(c-t)$; it is obtained by an exponential twist from $(\cL,\cL_\bbullet)$; the main character is played by the subsheaf $\scrL_{<0}\subset\scrL$.
\item
The complex blow-up $e:X\to\PP^1_\tau\times\CC_t$ at the points $(\infty,c)$, the divisor with normal crossings $D=e^{-1}(D_\infty)$ and the real blow-up $\varpi_X:\wt X\to X$ along the irreducible components of $D$, with boundary $\wt D$; the pullback $\scrM_\sX$ of $\scrM$ by~$e$, which is a germ along $D$ of meromorphic flat bundle with poles along $D$, and thus a germ of left $\cD_X$-module.
\item
The Stokes-filtered local system $(\scrL_\sX,\scrL_{\sX,\bbullet})$ on $\wt D$ pulled back from $(\scrL,\scrL_\bbullet)$ by $\wt e:\wt X\to\wt\PP^1_\tau\times\CC_t$ induced by $e$, indexed by $\Phi_X=\{\psi_c:=\varphi_c\circ e\mid c\in C\}$; the main character is $\scrL_{\sX,\leq0}$ and the main relation, which will be proved, is $\scrL_{<0}=\bR\wt e_*\scrL_{\sX,\leq0}$.
\item
On the Fourier side, the regular holonomic $\Clt$-module $N$ and its associated perverse complex $\cF=\pDR^\an(N)$.
\item
If $N$ is the Fourier transform of $M$, we prove the corresponding topological relation $\cF\simeq \bR\wt p_*\scrL_{<0}\simeq\bR\wt\pi_*\scrL_{\sX,\leq0}$ (\cf Diagrams \eqref{eq:diag1} and \eqref{eq:diag2}).
\end{itemize}

\section{The Laplace transformation for Stokes-filtered local systems of\nobreakspace exponential\nobreakspace type}\label{sec:FStokes}

Let $\bmk$ be a field. Let us fix the parametrization
\begin{align*}
\RR/2\pi\ZZ&\to S^1\subset\CC\\
\theta&\mto\rme^{\sfi\theta}
\end{align*}
of $S^1$, and we often write $\theta$ instead of $\theta\bmod2\pi$. We~will implicitly identify $\RR/2\pi\ZZ$ with $S^1$ via this exponential map. Recall (\cf\cite{Bibi10}) that a $\bmk$-Stokes-filtered local system $(\cL,\cL_\bbullet)$ of exponential type on $S^1$ indexed by a finite set $C\subset\CC$ is a locally constant sheaf $\cL$ of $\bmk$-vector spaces on $S^1$ equipped of a family (indexed by $C$) of nested subsheaves $\cL_{<c}\subset\cL_{\leq c}$ of $\bmk$-vector spaces such that
\begin{itemize}
\item
for each $c\in C$, $\gr_c\!\cL:=\cL_{\leq c}/\cL_{<c}$ is a $\CC$-local system on $S^1$,
\item
for $c'\neq c$ and $\theta\in \RR/2\pi\ZZ$, we~have $\cL_{\leq c',\theta}\subset\cL_{<c,\theta}$ if $\reel[(c-c')e^{-\sfi\theta}]<0$ (a~property that we denote by $c<_\theta c'$, and we set $c\leq_\theta c'$ if $c=c'$ or $c<_\theta c'$),
\item
for any $\theta\in \RR/2\pi\ZZ$, we~have $\cL_{c,\theta}\simeq\bigoplus_{c'\leq_\theta c}\gr_{c'}\!\cL_\theta$.
\end{itemize}

The filtration indexed by $C$ can be extended to a filtration indexed by $\CC$ by setting, for each $t\in\CC$ and each $\theta\in\RR/2\pi\ZZ$,
\[
\cL_{\leq t,\theta}=\sum_{\substack{c\in C\\ c\leq_\theta t}}\cL_{\leq c,\theta}\quand \cL_{<t,\theta}=\sum_{\substack{c\in C\\ c<_\theta t}}\cL_{\leq c,\theta},
\]
and one checks that these formulas define nested subsheaves $\cL_{<t}\subset\cL_{\leq t}$ of $\cL$, which are equal if $t\notin C$.

The set of \emph{Stokes directions of a finite set $C\subset\CC$} is the subset of $S^1$ consisting of the points $e^{\sfi(\pm\pi/2-\arg(c_i-c_j))}$ for each pair $c_i\neq c_j$ in $C$. In~the continuation of this section, we~often omit the reference to the field $\bmk$, which is fixed. We~consider the following categories:
\begin{itemize}
\item
the category $\Sto(S^1,\bmk,C)$ of Stokes-filtered $\bmk$-local systems $(\cL,\cL_\bbullet)$ of exponential type on $S^1$ indexed by $C$;
\item
the category $\Perv^0(\CC_t,\bmk,C)[-1]$ of $\CC$-constructible sheaves $\cF$ of $\bmk$-vector spaces on $\CC_t$ with singularities contained in $C$ and satisfying $H^*(\CC_t,\cF)=0$. The notation $\Perv^0(\CC_t,\bmk,C)[-1]$ is justified by the property (\cf\cite{K-K-P08}) that $\cF$ is an object of this category if and only if $\cF[1]$ is a perverse sheaf on $\CC_t$ with vanishing global hypercohomology. 
\end{itemize}

The main result of this section consists of the definition of the functor $F^{\infty,\top}_-$ (Definition \ref{def:Finftop}, a similar functor $F^{\infty,\top}_+$ can be defined similarly), in~such a way that, together with the functor $F^{\infty,\top}_+$ constructed in \cite[Chap.\,7]{Bibi10}, we~obtain a diagram\vspace*{-3pt}
\begin{equation}\label{eq:Ftop}
\xymatrix@C=1.5cm{
\Perv^0(\CC_t,\bmk,C)\ar@<.5ex>[r]^-{F^{\infty,\top}_+}&\ar@<.5ex>[l]^-{F^{\infty,\top}_-}\Sto(S^1,\bmk,C)
}
\end{equation}
which satisfies the next proposition, proved in Section \ref{subsec:proofLaplacelocsyst}:

\begin{proposition}\label{prop:Laplacelocsyst}
The functors $F^{\infty,\top}_+,F^{\infty,\top}_-$ are quasi-inverse equivalences of categories.
\end{proposition}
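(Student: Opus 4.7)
The plan is to show that the compositions $F^{\infty,\top}_-\circ F^{\infty,\top}_+$ and $F^{\infty,\top}_+\circ F^{\infty,\top}_-$ are naturally isomorphic to the respective identity functors. Before doing so, I would first check that $F^{\infty,\top}_-$ lands in $\Perv^0(\CC_t,\bmk,C)$. Using the description from the \emph{Dramatis personae}, the functor is built from the exponentially twisted subsheaf $\scrL_{<0}\subset\scrL$ on $S^1_\tau\times\CC_t$, obtained from $(\cL,\cL_\bbullet)$ by the twist encoded by $\varphi_c(\tau,t)=\tau(c-t)$, pushed forward by $\wt p$ to $\CC_t$. Outside $C$, the Stokes order among the $\varphi_c$ does not jump as $t$ varies, so the resulting sheaf is constructible with singularities only in $C$. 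The global vanishing $\bR\Gamma(\CC_t,F^{\infty,\top}_-(\cL,\cL_\bbullet))=0$ follows by Fubini from a stalkwise computation on $S^1_\tau$: at each $\theta$, the stalk $(\scrL_{<0})_{(\theta,t)}$ is nonzero only on a semi-closed arc in the $t$-line, and Lemma~\ref{lem:vanishing} yields the desired vanishing.

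For the equivalence itself, I would reduce to elementary objects by dévissage. On the $\Sto$ side, every Stokes-filtered local system of exponential type is, between consecutive Stokes directions, a direct sum of its pure pieces $\gr_c\!\cL$, and globally a successive extension of them. On the $\Perv^0$ side, Lemma~\ref{lem:recol} permits reconstruction from a local system on $\CC_t\setminus C$ together with stalk data at each $c\in C$. Since both functors are built from pushforwards of twisted kernels they are exact; it suffices to prove the quasi-inverseness on pure building blocks indexed by a single $c\in C$ and to check compatibility with extensions.

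On such a pure block, a direct computation becomes tractable: with single exponent $c$ and underlying local system $\cV$ on $S^1_\tau$, the subsheaf $\scrL_{<0}$ is the extension by zero of a pullback of $\cV$ from the half-space $\{\reel(\tau(c-t))<0\}$. Its pushforward $\bR\wt p_*\scrL_{<0}$ has stalks computed via Lemma~\ref{lem:vanishing}, and feeding the result into $F^{\infty,\top}_+$ as defined in \cite[Chap.\,7]{Bibi10} reproduces the original pure object. The crucial matching is the identification of the half-plane $\{\reel(\tau(c-t))<0\}$ with the open set of directions $\theta$ for which $c<_\theta\cbbullet$, which is built into both definitions.

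The main obstacle, and where the two-dimensional Stokes theory plays its essential role, is tracking the full filtration---not only the underlying local system---through both compositions in the presence of nontrivial extensions. A graded argument recovers each $\gr_c\!\cL$, but to recover the nested subsheaves $\cL_{<c}\subset\cL_{\leq c}$ and the global order structure one must control the interaction between exponential factors. This is where the blow-up space $\wt X$ and the pulled-back Stokes-filtered local system $(\scrL_{\sX},\scrL_{\sX,\bbullet})$ on $\wt D$ enter, via the identity $\scrL_{<0}\simeq\bR\wt e_*\scrL_{\sX,\leq 0}$: after blow-up, the exponential factors $\psi_c$ sit in good position along a normal crossing divisor, so standard local splittings make the filtration visible on each component, and gluing these splittings across Stokes directions and at the blown-up crossing points supplies precisely the information needed to match the filtration on both sides.
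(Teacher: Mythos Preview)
Your approach differs substantially from the paper's, and as written it has real gaps.

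The paper does \emph{not} proceed by d\'evissage to pure pieces. It constructs the two natural isomorphisms directly. For $F^{\infty,\top}_-\circ F^{\infty,\top}_+\simeq\id$, it runs the construction of $F^{\infty,\top}_+$ in a family over a parameter line $\CC_s$ (Proposition~\ref{prop:GLfamily}) and identifies the result fiberwise with $\cF$ via Corollary~\ref{cor:GL}. For $F^{\infty,\top}_+\circ F^{\infty,\top}_-\simeq\id$, it builds a canonical morphism $(\cL,\cL_\bbullet)\to(\cL',\cL'_\bbullet)$ of Stokes-filtered local systems from the Poincar\'e-dual adjunction $\wt\scrL_{<0}\to\wt p^{-1}\bR\wt p_*\wt\scrL_{<0}[1]$ (Proposition~\ref{prop:L'L}), proves the underlying map $\cL\to\cL'$ is an isomorphism by a rank count plus a surjectivity argument using Verdier duality of $\cL_{<s}$, and then invokes abelianity of the category of Stokes-filtered local systems to conclude that the filtered morphism is an isomorphism. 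No splitting into graded pieces is needed, and the filtration is matched automatically once the morphism is shown to respect it.

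There are two concrete problems with your sketch. First, the blow-up $\wt X$ and the sheaf $\scrL_{\sX,\leq0}$ play no role in the proof of this proposition; in the paper they enter only in Section~\ref{sec:RH}, where goodness of $\Phi_\sX\cup\{0\}$ along a normal crossing divisor is required for the asymptotic analysis of the moderate de~Rham complex (Theorem~\ref{th:compFou}). The purely topological equivalence of Proposition~\ref{prop:Laplacelocsyst} is carried out entirely on $S^1_\tau\times\wt\PP^1_t$. Second, your d\'evissage is incomplete as a strategy: even granting exactness of both functors (which you assert without argument), checking that a composite agrees with the identity on pure building blocks and is ``compatible with extensions'' does not by itself yield a natural isomorphism of functors. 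You still have to construct the natural transformation globally and then verify it is an isomorphism---which is exactly what the paper does, rendering the d\'evissage superfluous. The sentence ``gluing these splittings across Stokes directions\dots supplies precisely the information needed to match the filtration'' is where the genuine content would have to be, and nothing in your outline explains how that gluing is performed or why it is canonical.
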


\Subsubsection{A preliminary computation for Stokes-filtered local systems of exponential type}

\begin{lemme}\label{lem:HkFL}
Let $(\cL,\cL_\bbullet)\in\Sto(S^1,C)$. For each $t\in\CC$, we~have $H^k(S^1,\cL_{<t})=0$ if $k\neq1$ and
\[
\dim H^1(S^1,\cL_{<t})=
\begin{cases}
\rk\cL&\text{if }t\notin C,\\
\rk\cL-\rk\gr_c\!\cL&\text{if }t=c\in C.
\end{cases}
\]
\end{lemme}

\begin{proof}
Given $t\in \CC$, let us decompose $S^1$ as $I_1\cup I_2$, where each $I_i$ is a closed interval of length $\pi+\epsilon$ with $\epsilon>0$ small enough such that each $I_i$ contains exactly one Stokes direction of $C\cup\{t\}$, and its endpoints are not Stokes directions. We~denote the inclusion $I_i\hto S^1$ by $\iota_i$, and $\iota_{12}:I_1\cap I_2\hto S^1$. For any complex of sheaves $\cF$ on~$S^1$ we have the Mayer-Vietoris triangle
\[
\cF\to R\iota_{1*}\,\iota_1^{-1}\cF\oplus R\iota_{2*}\,\iota_2^{-1}\cF\to R\iota_{12*}\,\iota_{12}^{-1}\cF\To{+1}.
\]
By considering the associated long exact sequence attached to $R\Gamma(S^1,\cbbullet)$ of the above triangle with $\cF=\cL_{<t}$, we~are left with showing
\begin{itemize}
\item
$H^k(I_i,\cL_{<t})=0$ for any $k$,
\item
$H^k(I_{12},\cL_{<t})=0$ for $k\geq1$ and $\dim H^0(I_{12},\cL_{<t})=\rk\cL$ if $t\notin C$, or $\rk\cL-\rk\gr_c\!\cL$ if $t=c\in C$.
\end{itemize}

On each interval $I_i=(\theta_i,\theta'_i)$, $(\cL,\cL_\bbullet)$ is graded (\cf\cite[Lem.\,3.12]{Bibi10}). By~considering each homogeneous component separately, we~can thus assume that $(\cL,\cL_\bbullet)|_{I_i}$ has only one jumping index $c_o\in\CC$. This means that $\cL|_{I_i}\!=\!\gr_{c_o}\!\cL_{I_i}\!\simeq\!\CC_{I_i}$. and $\cL_{<t}=\gr_{c_o}\!\cL$ on the open sub-interval of $I_i$ where $c_o<t$ and~$0$ otherwise.

If $t\neq c_o$, this interval $I_{i,t}$ is non-empty and takes the form $(\theta_i,\theta_t)$ (or $(\theta_t,\theta'_i)$). Denoting by $j:I_{i,t}\hto I_i$ the inclusion, we conclude that $\cL_{<t}\simeq j_!j^{-1}\CC_{I_i}$, and the first point follows from the vanishing $H^*_\rc([0,1),\bmk)=0$ (\cf Lemma \ref{lem:vanishing}). For the second point, this description shows that $\cL_{<t}|_{I_{12}}$ is a constant sheaf and it is zero on one connected component of $I_{12}$ and equal to $\cL$ on the other component.

If $t=c_o$, we~have identically $(\gr_{c_o}\!\cL)_{<t}=0$ and there is nothing to prove.
\end{proof}

\begin{remarque}
The cohomology $H^k(S^1,\cL_{\leq c})$, for $c\in C$, behaves a little differently. Indeed, in the decomposition considered in the last part of the proof of the lemma, the component of $\cL_\bbullet$ corresponding to $c\in C$ is a locally constant sheaf on $S^1$ isomorphic to $\gr_c\!\cL$, and we have a non-canonical decomposition
\[
H^1(S^1,\cL_{\leq c})\simeq H^1(S^1,\cL_{<c})\oplus H^1(S^1,\gr_c\!\cL),\quad H^0(S^1,\cL_{\leq c})\simeq H^0(S^1,\gr_c\!\cL).
\]
\end{remarque}

\subsubsection{Topology of the family of real blow-ups}
For any $a\in\CC^*$, we~define the open intervals $S^1_{0<a},S^1_{0>a}\subset S^1$ by
\begin{align*}
S^1_{0<a}&=\{\theta\mid\reel(a\rme^{-\sfi\theta})>0\}=\{\theta\mid\arg a-\theta\in(-\pi/2,\pi/2)\bmod2\pi\},\\
S^1_{a<0}&=\{\theta\mid\reel(a\rme^{-\sfi\theta})<0\}=\{\theta\mid\arg a-\theta\in(\pi/2,3\pi/2)\bmod2\pi\},
\end{align*}
and the respective complementary closed intervals
\begin{align*}
S^1_{a\geq 0}&=\{\theta\mid\reel(a\rme^{-\sfi\theta})\leq0\}=\{\theta\mid\arg a-\theta\in[\pi/2,3\pi/2]\bmod2\pi\},\\
S^1_{0\leq a}&=\{\theta\mid\reel(a\rme^{-\sfi\theta})\geq0\}=\{\theta\mid\arg a-\theta\in[-\pi/2,\pi/2]\bmod2\pi\}.
\end{align*}
If $a=0$, then by convention $S^1_{0<a}=S^1_{0>a}=\emptyset$ and $S^1_{0\leq a}=S^1_{0\geq a}=S^1$.

In the product $\cS^1:=S^1\!\times\CC_t$, we~define for each $s\in\CC$ the subsets
\begin{equation}\label{eq:ccS^1}
\cS^1_{t<s}=\bigsqcup_{t\in\CC_t}S^1_{0<s-t},\qquad \cS^1_{t>s}=\bigsqcup_{t\in\CC_t}S^1_{s-t<0},
\end{equation}
and $\cS^1_{t\leq s},\cS^1_{t\geq s}$ similarly. The next lemma is immediate.

\begin{lemme}
The subsets $\cS^1_{t<s},\cS^1_{t>s}$ are open in $S^1\!\times\CC_t$.\qed
\end{lemme}

For $s$ fixed, the fiber $\cS^1_{t<s,\theta}$ over $\theta\in S^1$ is the open half-plane $\reel[(t-s)\rme^{-\sfi\theta}]<0$ (bounded by the real line $\reel[(t-s)\rme^{-\sfi\theta}]=0$ going through $s$), while $\cS^1_{t\geq s,\theta}$ is the complementary closed half-plane.

Let $\wt\PP^1_t=\CC_t\sqcup S^1_t$ be the real blow-up of $\PP^1$ at $t=\infty$. Let $t'=1/t$ denote the coordinate at $\infty$ on $\PP^1_t$. The coordinate on $S^1_t$ is $-\arg t'=\arg t$ if we choose the orientation induced by that of $\CC_t$. For any given $s\in\CC$ and $t$ with $|t|\gg0$, we~have $\arg(t-s)=\arg t$ on $S^1_t$ since $1/(t-s)=t'\cdot (1/(1-st'))$. We~consider the open subsets of $S^1\!\times S^1_t$ defined by
\begin{equation}\label{eq:wtccS^1>}
\begin{aligned}
\wt\cS^1_{\infty,<}&=\{(\theta,\arg t)\in S^1\!\times S^1_t\mid\arg t-\theta\in(\pi/2,3\pi/2)\bmod2\pi\},\\
\wt\cS^1_{\infty,>}&=\{(\theta,\arg t)\in S^1\!\times S^1_t\mid\arg t-\theta\in(-\pi/2,\pi/2)\bmod2\pi\},
\end{aligned}
\end{equation}
and we~extend $\cS^1_{t<s},\cS^1_{t>s}$ as open subsets of $S^1\!\times\wt\PP^1_t$ by setting
\begin{equation}\label{eq:wtccS^1}
\wt\cS^1_{t<s}=\cS^1_{t<s}\sqcup\wt\cS^1_{\infty,<},\qquad
\wt\cS^1_{t>s}=\cS^1_{t>s}\sqcup\wt\cS^1_{\infty,>}.
\end{equation}
We~define similarly the respective complementary closed subsets $\wt\cS^1_{t\geq s}$ and $\wt\cS^1_{t\leq s}$ of $S^1\!\times\wt\PP^1_t$, which are equal to $\cS^1_{t\geq s}\sqcup\wt\cS^1_{\infty,\geq}$ and $\cS^1_{t\leq s}\sqcup\wt\cS^1_{\infty,\leq}$ respectively, with an obvious notation. We~have an exact sequence\footnote{For any locally closed subset $Z\subset X$ of a topological space $X$ and any sheaf $\cF$ on $X$, $\cF_Z$ denotes the restriction of $\cF$ to $Z$ extended by zero.}
\begin{equation}\label{eq:exactS}
0\to\bmk_{\wt\cS^1_{t<s}}\to\bmk_{S^1_\tau\times \wt\PP^1_t}\to\bmk_{\wt\cS^1_{t\geq s}}\to0.
\end{equation}
Let $i_s:S^1\!\times\{s\}\hto S^1\!\times\wt\PP^1_t$. Then we have the equalities
\begin{equation}\label{eq:is}
i_s^{-1}\bmk_{\wt\cS^1_{t<s}}=0\quand i_s^{-1}\bmk_{\wt\cS^1_{t\geq s}}=\bmk_{S^1\!\times\{s\}}.
\end{equation}
We~also set, for example,
\begin{equation}\label{eq:S1<}
\begin{aligned}
\wt\cS^1_<&:=(S^1\!\times\CC_t)\sqcup\wt\cS^1_{\infty,<}\underset{\text{open}}\subset S^1\!\times\wt\PP^1_t,\\ \wt\cS^1_\geq&:=(S^1\!\times\CC_t)\sqcup\wt\cS^1_{\infty,\geq}\underset{\text{closed}}\subset S^1\!\times\wt\PP^1_t.
\end{aligned}
\end{equation}
Therefore, $\wt\cS^1_{t<s}\subset\wt\cS^1_<$ for each $s\in \CC$.

If we identify $\CC_t$ with an open disc centered at $s$, the fiber $\cS^1_{t<s,\theta}$, which is an open half-plane, is identified with an open half-disc. Then $\wt\cS^1_{t<s,\theta}$ is obtained by adding to it the open interval of $S^1_t$ consisting of points $\arg t$ satisfying $\cos(\arg t-\theta)<0$. We~have thus obtained the following topological description.

\begin{lemme}\label{lem:wtccS^1}
Let us fix $s\in\CC$. Then, for any $\theta\in S^1$, the fibers $\wt\cS^1_{t<s,\theta}$ and $\wt\cS^1_{t>s,\theta}$ are homeomorphic to a closed disc with one non-empty closed interval deleted in its boundary. Furthermore, for any $s\neq s'\in\CC$, the fiber $\wt\cS^1_{s'<t<s,\theta}:=\wt\cS^1_{s'<t,\theta}\cap\wt\cS^1_{t<s,\theta}$ is
\begin{itemize}
\item
homeomorphic to a closed disc with two disjoint non-empty closed intervals deleted in its boundary, if $s'<_\theta s$, and
\item
is empty otherwise.\qed
\end{itemize}
\end{lemme}

\subsubsection*{Cohomological computations}
As an immediate consequence, we~can compute the cohomology with compact support of these sets. We~find, according to Lem\-ma~\ref{lem:vanishing} or a variant of it:
\begin{equation}\label{eq:wtccS^1cohom}
\begin{aligned}
\bR\Gamma_\rc(\wt\cS^1_{t<s,\theta},\bmk)&=0,\\
\bR\Gamma_\rc(\wt\cS^1_{s'<t<s,\theta},\bmk)&=
\begin{cases}
\bmk[-1]&\text{if }s'<_\theta s,\\
0&\text{otherwise}.
\end{cases}
\end{aligned}
\end{equation}

Let $\wt p:S^1\!\times\wt\PP^1_t\to\wt\PP^1_t$ denote the projection.

\begin{lemme}\label{lem:wtccS^1cohom}
We have
\[
\wt p_*\bmk_{\wt\cS^1_{t\geq s}}\simeq\bmk_{\wt\PP^1_t},\quad
R^1\wt p_*\bmk_{\wt\cS^1_{t\geq s}}\simeq\bmk_s,\quad
R^k\wt p_*\bmk_{\wt\cS^1_{t\geq s}}=0,\text{ for } k\geq2.
\]
\end{lemme}

\begin{proof}
All fibers of $\wt p:\wt\cS^1_{t\geq s}\to\wt\PP^1_t$ are connected, hence the first point. The fiber of~$p$ at~$t$ is a non-empty closed interval of $S^1$ if $t\neq s$, while the fiber over $t=s$ is equal to~$S^1$, yielding the last two points.
\end{proof}

\subsubsection{The Laplace transform of a Stokes-filtered local system of exponential type}\label{subsec:FStokes}
In order to motivate and justify the construction, we~let $\CC_\tau$ be the complex line with coordinate $\tau$, let $\PP^1_\tau$ denote its projective completion, and we let $\varpi:\wt\PP^1_\tau\to\PP^1_\tau$ denote the real blowing up of $\PP^1_\tau$ at $\tau=\infty$, with $\varpi^{-1}(\infty):=S^1_\tau\simeq S^1$. When needed, we~use the coordinate $\tau'$ on the chart centered at $\infty$, so that $\tau'=1/\tau$ on $\CC^*_\tau$. We~mainly use the polar coordinates for~$\tau'$ on $S^1\!\times(0,\infty]$, that we denote $(\rho_{\tau'},\theta)$, \ie $\tau'=\rho_{\tau'}e^{\sfi\theta}$.

We~consider the product $\PP^1_\tau\times\CC_t$ and the commutative diagram:\vspace*{-5pt}
\begin{equation}\label{eq:diag1}
\hspace*{2.5cm}\begin{array}{c}
\xymatrix@R=.5cm{
S^1_\tau\ar@{^{ (}->}[d]\ar@/_2.1pc/[dddd]&\wt D_\infty\ar[l]_-{\wt q}
\ar@{_{ (}->}[d]
\ar@/^2.1pc/[dddd]|(.33)\hole|(.69)\hole&\wt D_\infty:=S^1_\tau\times\CC_t
\\
\wt\PP^1_\tau\ar[dd]_-{\varpi}&\wt\PP^1_\tau\times\CC_t\ar[l]_-{\wt q}\ar[dd]_-{\varpi\times\id}\ar[rd]^-{\wt p}\\
&&\CC_t\\
\PP^1_\tau&\PP^1_\tau\times\CC_t\ar[l]_-{q}\ar[ru]_-{p}\\
\{\infty\}\ar@{_{ (}->}[u]&D_\infty\ar@{^{ (}->}[u]\ar[l]_-{q}&D_\infty:=\{\infty\}\times\CC_t
}
\end{array}
\end{equation}
In this diagram, $q,p$ are the natural projections of $\PP^1_\tau\times\CC_t$ to the first and second factor respectively, and $\wt q,\wt p$ are the corresponding projections from $\wt\PP^1_\tau\times\CC_t$. Since $S^1_\tau=\varpi^{-1}(\infty)\subset\wt\PP^1_\tau$, we~also have $\wt D_\infty=(\varpi\times\id)^{-1}(D_\infty)\subset\wt\PP^1_\tau\times\CC_t$.

We denote by $\Phi$ the set of meromorphic functions $\varphi_c(\tau',t)=\tau(c-t)=(c-t)/\tau'$ with $c\in C$. The common pole divisor of these functions is $D_\infty$. The set $\Phi$ is a \emph{good set of meromorphic functions with poles along $D_\infty$}, since for any pair $c\neq c'\in C$, the difference $\varphi_c-\varphi_{c'}=(c-c')/\tau'$ is purely monomial, in the sense of \hbox{\cite[Def.\,9.8]{Bibi10}}. Notice that, however, the  family $\Phi\cup\{0\}$ is \emph{not good} at the points $(\infty,c)$ for $c\in C$ (indeed, for a given $c\in C$, $\varphi_c$~is not purely monomial at $t=c$, in the sense of \hbox{\cite[Def.\,9.8]{Bibi10}}).

We define a Stokes-filtered local system $(\scrL,\scrL_\bbullet)$ indexed by $\Phi$ on $\wt D_\infty$ by setting, for $\varphi_c\in\Phi$:\vspace*{-3pt}
\[
\scrL=\wt q{}^{-1}(\cL),\quad
\scrL_{\leq\varphi_c}=\wt q{}^{-1}(\cL_{\leq c})\quand \scrL_{<\varphi_c}=\wt q{}^{-1}(\cL_{< c}).
\]
That $(\scrL,\scrL_\bbullet)$ is a Stokes-filtered local system is a consequence of the property that, on the one hand, $\gr_{\varphi_c}\scrL=\wt q^{-1}\gr_c\!\cL$, which is a locally constant sheaf, and on the other hand, for any $(\theta,t)\in S^1_\tau\times\CC_t$, we~have $\varphi_c\leq_{(\theta,t)}\varphi_{c'}$ if and only if $c\leq_{\theta}c'$.

In other words, for a fixed $t\in\CC$, we~have twisted the exponential factors of $(\cL,\cL_\bbullet)$ by adding $-t/\tau'$ to all of them without changing the corresponding sheaves~$\cL_\bbullet$.

\subsubsection*{The sheaf $\scrL_{<0}$}\label{subsec:Lneg}
Although the  family $\Phi\cup\{0\}$ is not good at the points $(\infty,c)$ for $c\in C$, the subset $\{(\theta,t)\mid\varphi_c<_{(\theta,t)}0\}$ is open in $\wt D_\infty$, as~it contains no point of $S^1_\tau\times\{c\}$. The latter property yields that, if we set for each $(\theta,t)\in S^1_\tau\times\CC_t$,\vspace*{-3pt}
\[
\scrL_{<0,(\theta,t)}:=\sum_{c\mid\varphi_c<_{(\theta,t)}0}\scrL_{<\varphi_c,(\theta,t)}=\bigoplus_{\varphi_c<_{(\theta,t)}0}\gr_c\!\cL_{\theta},
\]
then $\scrL_{<0,(\theta,t)}$ is a germ at $(\theta,t)$ of a well-defined sheaf $\scrL_{<0}$ on $\wt D_\infty$.

\begin{lemme}\label{lem:Lneg}
The sheaf $\scrL_{<0}$ is the unique subsheaf of $\scrL=\wt q^{-1}\cL$ whose restriction to $S^1_\tau\times\{t\}$ is equal to $\cL_{<t}$.
\end{lemme}

\begin{proof}
For $(\theta,t)$ fixed, we~have $\{c\mid\varphi_c<_{(\theta,t)}0\}\!=\!\{c\mid c<_\theta t\}$, and therefore $\scrL_{<0,(\theta,t)}\!=\!\cL_{<t,\theta}$, as~desired. On the other hand, two subsheaves $\scrL',\scrL''$ of $\scrL=\wt q^{-1}\cL$ coincide if (and only if) their restrictions to $S^1_\tau\times\{t\}$ coincide for each $t$: denoting by $i_t$ the inclusion of the $t$-fiber, and noting that the functor $i_t^{-1}$ is exact, the assumption is that $i_t^{-1}[(\scrL'+\scrL'')/\scrL']=0$ for each $t$, and thus
\[
(\scrL'+\scrL'')/\scrL'=0.\qedhere
\]
\end{proof}

\begin{proposition}[Pushforward of $\scrL_{<0}$]\label{prop:pushLneg}
The complex $\bR\wt p_*\scrL_{<0}$ has nonzero cohomology in degree one at most, and $R^1\wt p_*\scrL_{<0}$ is a constructible sheaf on $\CC_t$ with singularities contained in~$C$. Furthermore, its global cohomology $H^*(\CC_t,R^1\wt p_*\scrL_{<0})$ is zero.
\end{proposition}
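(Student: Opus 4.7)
The plan is to compute $\bR\wt p_*\scrL_{<0}$ fiberwise via proper base change for $\wt p$, and then to obtain the global vanishing by switching to Leray for the other projection $\wt q$, exploiting the local grading of the Stokes filtration.

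First, since $\wt p$ has compact fibers $S^1_\tau$, proper base change identifies the stalk $(\bR\wt p_*\scrL_{<0})_t$ with $\bR\Gamma(S^1_\tau\times\{t\},\scrL_{<0}|_{S^1_\tau\times\{t\}})$, which by Lemma~\ref{lem:Lneg} equals $\bR\Gamma(S^1_\tau,\cL_{<t})$. Lemma~\ref{lem:HkFL} then places this complex in degree one only, with dimension $\rk\cL$ for $t\notin C$ and $\rk\cL-\rk\gr_c\!\cL$ at $t=c\in C$. Combined with the $\bmk$-constructibility of $\scrL_{<0}$ on $S^1_\tau\times\CC_t$ with respect to a product stratification whose trace on $\CC_t$ has strata $\CC_t\smallsetminus C$ and the points of $C$, and properness of $\wt p$, this yields the concentration of $\bR\wt p_*\scrL_{<0}$ in degree one as well as the constructibility of $R^1\wt p_*\scrL_{<0}$ with singularities contained in $C$.

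For the global vanishing, the Leray spectral sequence for $\wt p$ together with concentration in a single degree gives
\[
H^p(\CC_t,R^1\wt p_*\scrL_{<0})\simeq H^{p+1}(S^1_\tau\times\CC_t,\scrL_{<0}),
\]
so it will suffice to show $\bR\wt q_*\scrL_{<0}=0$ on $S^1_\tau$. This is a local statement: by \cite[Lem.\,3.12]{Bibi10}, each $\theta\in S^1_\tau$ admits arbitrarily small contractible arc neighborhoods $V$ over which the Stokes-filtered local system $(\cL,\cL_\bbullet)|_V$ is graded. Unwinding the formula $\scrL_{<0,(\theta',t)}=\sum_{c<_{\theta'}t}\cL_{\leq c,\theta'}$ using this grading produces a direct-sum decomposition
\[
\scrL_{<0}|_{V\times\CC_t}\simeq\bigoplus_{c\in C}(\jmath^V_c)_!\bigl(\wt q^{-1}\gr_c\!\cL|_V\bigr)\big|_{U^V_c},
\]
where $U^V_c:=\cS^1_{t>c}\cap(V\times\CC_t)$ and $\jmath^V_c:U^V_c\hto V\times\CC_t$ is the open inclusion. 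For each $c\in C$, the short exact sequence
\[
0\to(\jmath^V_c)_!\bmk\to\bmk_{V\times\CC_t}\to\bmk_{Z^V_c}\to0,\qquad Z^V_c:=(V\times\CC_t)\smallsetminus U^V_c,
\]
reduces the vanishing of $\bR\Gamma(V\times\CC_t,(\jmath^V_c)_!\bmk)$ to the contractibility of both $V\times\CC_t$ and $Z^V_c$. The latter follows from a radial deformation retraction $((\theta',t),s)\mto(\theta',c+(1-s)(t-c))$ onto $V\times\{c\}$, which preserves the defining inequality $c\geq_{\theta'}t$ because $(1-s)\reel[(c-t)\rme^{-\sfi\theta'}]\geq0$. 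Since $\gr_c\!\cL|_V$ is a constant sheaf on the contractible arc $V$, each summand has vanishing $\bR\Gamma$, whence $(\bR\wt q_*\scrL_{<0})_\theta=0$.

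The main obstacle is precisely the global vanishing, as $\wt q$ is not proper and one cannot invoke proper base change directly. The key device is the local graded form of the Stokes structure, which reduces the problem to the purely topological fact that a half-plane bundle over a contractible arc is contractible. Everything else—the fiberwise identification, the dimension count, and the constructibility—follows routinely from proper base change for $\wt p$ combined with Lemmas~\ref{lem:Lneg} and~\ref{lem:HkFL}.
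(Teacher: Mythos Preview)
Your proof is correct. The first part---fiberwise computation via proper base change for $\wt p$, then Lemmas~\ref{lem:Lneg} and~\ref{lem:HkFL}---is exactly what the paper does.

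For the global vanishing, you and the paper both reduce to $\bR\wt q_*\scrL_{<0}=0$, but you take a genuinely different route. The paper compactifies $\CC_t$ to $\wt\PP^1_t$, extends $\scrL_{<0}$ to a sheaf $\wt\scrL_{<0}$ on $S^1_\tau\times\wt\PP^1_t$ (Lemma~\ref{lem:wtcL}), and then, with $\wt q$ now proper, argues fiberwise over each $\theta\in S^1_\tau$ using the grading and the vanishing~\eqref{eq:wtccS^1cohom}. You instead compute the stalk of $\bR\wt q_*\scrL_{<0}$ directly as $\varinjlim_V\bR\Gamma(V\times\CC_t,\scrL_{<0})$ over small arcs $V$ where the Stokes structure splits, and kill each graded summand by the elementary contractibility of the closed half-plane bundle $Z^V_c$ over~$V$. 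Your argument is more self-contained for this proposition and avoids introducing $\wt\PP^1_t$; the paper's compactification, on the other hand, yields the extended sheaf $\wt\scrL_{<0}$ that is reused later (Proposition~\ref{prop:LneqL} and the proof of Proposition~\ref{prop:L'L}), so in the context of the whole paper the compactification pays for itself.
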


As a consequence (\cf \cite{K-K-P08}), $\bR\wt p_*\scrL_{<0}[2]$ is a perverse complex.

\begin{definition}[\cf\cite{Y-Z24}]\label{def:Finftop}
The topological Laplace transform $F^{\infty,\top}_-(\cL,\cL_\bbullet)$ of the Stokes-filtered local system $(\cL,\cL_\bbullet)$ is the object $\bR\wt p_*\scrL_{<0}[2]$ of $\Perv^0(\CC_t,\bmk,C)$.
\end{definition}

\begin{proof}[Proof of the first part of Proposition \ref{prop:pushLneg}]
As $\wt p$ is proper, we~can argue fiber by fiber. It~is easy to check that $R^k\wt p_*\scrL_{<0}$ is a locally constant sheaf on $\CC_t\moins C$ for any~$k$. The first part of the proposition is a then consequence of Lemma \ref{lem:Lneg} together with Lemma \ref{lem:HkFL}.
\end{proof}

\begin{proof}[Proof of the second part of Proposition \ref{prop:pushLneg}]
According to the first part, we~have to prove the vanishing of the hypercohomology of $\bR\wt q_*\bR\wt p_*\scrL_{<0}$ on $S^1_\tau\times\CC_t$ and, by first switching the pushforward functors, it is enough to prove the vanishing of $\bR\wt q_*\scrL_{<0}$. In~order to argue fiberwise, we~need to replace $\wt q:S^1_\tau\times\CC_t\to S^1_\tau$ with a proper morphism. We~thus compactify $\CC_t$ by the real blow-up $\wt\PP^1_t$ of $\PP^1_t$ at $t=\infty$ and we denote by $S^1_t$ the complement of $\CC_t$ in $\wt\PP^1_t$. Let $j_t:S^1_\tau\times\CC_t\hto S^1_\tau\times\wt\PP^1_t$ denote the open inclusion.

Let $t'=1/t$ be the coordinate of $\PP^1_t$ at infinity. Near $(\infty,\infty)$, each $\varphi_c$ reads $c/\tau'-1/t'\tau'$ and $\Phi\cup\{0\}$ is good at this point. We~note that the set
\[
\{(\theta,\theta_t)\in S^1_\tau\times S^1_t\mid\varphi_c<_{(\theta,\theta_t)}0\}=\{(\theta,\theta_t)\mid\theta_t-\theta\in(-\pi/2,\pi/2)\bmod2\pi\}=\wt\cS^1_{\infty,>}
\]
is independent of $c$  (\cf \eqref{eq:wtccS^1>}) and we set $\wt\cS^1_>=(S^1_\tau\times\CC_t)\sqcup\wt\cS^1_{\infty,>}$.

\begin{lemme}\label{lem:wtcL}
We have $\bR j_{t,*}\scrL_{<0}=j_{t,*}\scrL_{<0}=:\wt\scrL_{<0}$ and the latter sheaf satisfies
\begin{starequation}\label{eq:wtcL}
\wt\scrL_{<0}|_{S^1_\tau\times S^1_t}=(\wt q_\infty^{-1}\cL)_{\wt\cS^1_{\infty,>}},
\end{starequation}%
\ie the extension by zero of the restriction of $\wt q_\infty^{-1}\cL$ to the open subset $\wt\cS^1_{\infty,>}$, where~$\wt q_\infty$ denotes the projection $S^1_\tau\times S^1_t\to S^1_\tau$.
\end{lemme}

\begin{proof}
The statement is local, so we can fix a small product neighborhood $U\times \ov V$ of $(\theta,\theta_t)$ in $S^1_\tau\times\wt\PP^1_t$ and choose a decomposition $\cL=\bigoplus_{c\in C}\gr_c\cL$ on $U$. Let us set $V=\ov V\cap\CC_t$. Then, on $U\times V$, we~have $\scrL_{<0}|_{U\times V}\simeq\bigoplus_{c\in C}\gr_c\cL|_U\otimes\bmk_{\cS^1_{c<t}}|_{U\times V}$, and we can argue with each summand independently. We~are thus reduced to showing
\begin{equation}\label{eq:jt}
\bR j_{t,*}\bmk_{\cS^1_{c<t}}|_{U\times V}\simeq\bmk_{\wt\cS^1_>}|_{U\times\ov V}.
\end{equation}
Choosing $\theta$ in the open interval $U$, we~have $\bmk_{\cS^1_{c<t}}|_{U\times V}\simeq\bmk_U \boxtimes\bmk_{\cS^1_{c<t,\theta}}|_{V}$. The set $\{t\mid c<_\theta t\}$ is an open half-plane. The closure of the boundary line intersects $S^1_t$ in two points $\theta_{t,1},\theta_{t,2}$. Away from these boundary points, the isomorphism \eqref{eq:jt} holds on $\ov V\cap S^1_t$, and it remains to be shown that the germ of $\bR j_{t,*}\bmk_{\cS^1_{c<t,\theta}}$ at $\theta_{t,1},\theta_{t,2}$ is zero. We~can choose open neighborhoods $\ov{V_i}$ of $\theta_{t,i}$ ($i=1,2$) in $S^1_\tau\times\wt\PP^1_t$ to be homeomorphic to a closed disc with $\theta_{t,i}$ on its boundary, and $\bmk_{\cS^1_{c<t,\theta}}$ is the constant sheaf on an open half-disc of $\ov{V_i}$, extended by zero on the boundary line (a diameter of $\ov{V_i}$). The desired assertion amounts to $H^*(\ov V_i,\bmk_{\cS^1_{c<t,\theta}})=0$, which is the content of Lemma \ref{lem:vanishing}.
\end{proof}

\begin{lemme}\label{lem:qcLneg}
We have $\bR\wt q_*\scrL_{<0}=0$.
\end{lemme}

\begin{proof}
Abusing notation, we~also denote by $\wt q$ the projection $S^1_\tau\times\wt\PP^1_t\to S^1_\tau$. According to Lemma \ref{lem:wtcL}, the assertion amounts to $\bR\wt q_*\wt\scrL_{<0}=0$. Now that $\wt q$ is made proper, we~can argue fiberwise, and we fix $\theta\in S^1_\tau$. We~can thus assume that $(\cL,\cL_\bbullet)$ is graded, so that $\wt\scrL_{<0,\theta}$ can be decomposed with respect to the chosen grading, and we can argue with each $c\in C$ separately.

Let us fix $c_o\!\in\! C$, and assume that $\cL_\theta\!=\!\gr_{c_o}\cL_\theta$. Then, using the notation of~\eqref{eq:wtccS^1}, $\scrL_{<0,\theta}\simeq\gr_{c_o}\!\cL_{\theta}\otimes\bmk_{\wt\cS^1_{c_o<t,\theta}}$ (\ie the constant sheaf with fiber $\gr_{c_o}\!\cL_{\theta}$ on $\wt\cS^1_{c_o<t,\theta}$, extended by zero). The conclusion follows from \eqref{eq:wtccS^1cohom}.
\end{proof}

The lemma yields $\bR\Gamma(\CC_t,\bR\wt p_*\scrL_{<0})=\bR\wt q_*\bR\wt p_*\scrL_{<0}\simeq\bR\wt p_*\bR\wt q_*\scrL_{<0}=0$, ending the proof of Proposition \ref{prop:pushLneg}.
\end{proof}

We end this section by showing how to recover the Stokes-filtered local system $(\cL,\cL_\bbullet)$ on $S^1_\tau$ from the pair $(\wt\scrL=\wt q^{-1}\cL,\wt\scrL_{<0})$ defined on $S^1_\tau\times\wt\PP^1_t$.

\begin{proposition}\label{prop:LneqL}
For each $s\in\CC_t$, we~have
\[
R^k\wt q_*(\wt\scrL_{<0}\otimes\bmk_{\wt\cS^1_{t\geq s}})=0=R^k\wt q_*(\wt q^{-1}\cL\otimes\bmk_{\wt\cS^1_{t\geq s}})\quad\text{for }k\neq0,
\]
and the natural morphism
\[
\wt q_*(\wt\scrL_{<0}\otimes\bmk_{\wt\cS^1_{t\geq s}})\to \wt q_*(\wt q^{-1}\cL\otimes\bmk_{\wt\cS^1_{t\geq s}})\simeq\cL
\]
is an isomorphism onto $\cL_{<s}$.
\end{proposition}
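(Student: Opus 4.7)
The plan is to tensor the short exact sequence \eqref{eq:exactS} with $\wt\scrL_{<0}$ and apply $\bR\wt q_*$. Since $\bR\wt q_*\wt\scrL_{<0}=0$ by Lemma \ref{lem:qcLneg}, the resulting distinguished triangle collapses to
$$\bR\wt q_*(\wt\scrL_{<0}\otimes\bmk_{\wt\cS^1_{t\geq s}}) \simeq \bR\wt q_*(\wt\scrL_{<0}\otimes\bmk_{\wt\cS^1_{t<s}})[1].$$
I would then compute the right-hand side fiberwise via proper base change. On a small arc $U\ni\theta$ where $\cL$ admits a graded splitting $\cL|_U\simeq\bigoplus_c\gr_c\!\cL$, the construction of $\wt\scrL_{<0}$ gives $\wt\scrL_{<0}|_{U\times\wt\PP^1_t}\simeq\bigoplus_c(\gr_c\!\cL|_U)\otimes\bmk_{\wt\cS^1_{c<t}}$; tensoring with $\bmk_{\wt\cS^1_{t<s}}$ isolates the summand $\bmk_{\wt\cS^1_{c<t<s}}$ in each $c$. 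Applying \eqref{eq:wtccS^1cohom} yields $\gr_c\!\cL_\theta[-1]$ exactly when $c<_\theta s$ and zero otherwise, so the fiber cohomology is $\cL_{<s,\theta}[-1]$. After the shift $[1]$, this becomes $\cL_{<s,\theta}$ in cohomological degree zero, which establishes the vanishing of $R^k\wt q_*(\wt\scrL_{<0}\otimes\bmk_{\wt\cS^1_{t\geq s}})$ for $k\neq 0$ and identifies the stalks of the degree-zero direct image.

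For the second sheaf, the fiberwise restriction of $\wt q^{-1}\cL\otimes\bmk_{\wt\cS^1_{t\geq s}}$ is the constant sheaf $\cL_\theta$ on the closed half-disc $\wt\cS^1_{t\geq s,\theta}$, which is contractible by Lemma \ref{lem:wtccS^1}. Hence proper base change gives $\wt q_*(\wt q^{-1}\cL\otimes\bmk_{\wt\cS^1_{t\geq s}})\simeq\cL$ with vanishing higher direct images. The natural morphism is induced by the subsheaf inclusion $\wt\scrL_{<0}\hookrightarrow\wt q^{-1}\cL$; this inclusion is preserved by tensoring with $\bmk_{\wt\cS^1_{t\geq s}}$ and by the left-exact $\wt q_*$, yielding an injective morphism whose stalks have just been identified with $\cL_{<s,\theta}\subset\cL_\theta$. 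By the extension of the Stokes filtration to $\CC$ recalled at the beginning of Section \ref{sec:FStokes}, the image is therefore the canonical subsheaf $\cL_{<s}$.

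The delicate point I anticipate is verifying rigorously that the image of the map coincides with the canonical subsheaf $\cL_{<s}$, rather than merely an abstract subsheaf of $\cL$ with matching stalk dimensions. I expect this to reduce to a local check in a graded chart near $\theta$: the inclusion $\wt\scrL_{<0}\hookrightarrow\wt q^{-1}\cL$ is diagonal with respect to the grading, and the summand-by-summand computation directly produces the canonical inclusion of each $\gr_c\!\cL$ with $c<_\theta s$ into $\cL$. Lemma \ref{lem:recol} can then be invoked to assemble these local identifications into the global subsheaf $\cL_{<s}$.
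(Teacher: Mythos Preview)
Your proposal is correct and follows essentially the same route as the paper: both use Lemma~\ref{lem:qcLneg} together with the exact sequence~\eqref{eq:exactS} to reduce to $\bR\wt q_*(\wt\scrL_{<0}\otimes\bmk_{\wt\cS^1_{t<s}})[1]$, then compute fiberwise via the graded splitting and~\eqref{eq:wtccS^1cohom}, and handle $\wt q^{-1}\cL\otimes\bmk_{\wt\cS^1_{t\geq s}}$ by contractibility of the closed half-disc. The only minor difference is in your ``delicate point'': the paper identifies the image with $\cL_{<s}$ not via Lemma~\ref{lem:recol} but by tracking the natural morphism through the short exact sequence $0\to\bmk_{\wt\cS^1_{c<t<s,\theta}}\to\bmk_{\wt\cS^1_{c<t,\theta}}\to\bmk_{\wt\cS^1_{t\geq s,\theta}}\to0$, which makes the compatibility with the canonical inclusion $\cL_{<s,\theta}\hookrightarrow\cL_\theta$ transparent.
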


\begin{proof}
On the one hand, we~have a natural isomorphism
\[
\bR\wt q_*(\wt q^{-1}\cL\otimes\bmk_{\wt\cS^1_{t\geq s}})\simeq\cL\otimes\bR\wt q_*\bmk_{\wt\cS^1_{t\geq s}}
\]
and we claim that the natural morphisms $\bR\wt q_*\bmk_{\wt\cS^1_{t\geq s}}\to\bR\wt q_*\wt q^{-1}\bmk_{S^1_\tau}\from \bmk_{S^1_\tau}$ are isomorphisms. Indeed, it is enough to check this fiber by fiber. The assertion follows from the property that the germ $\wt\cS^1_{t\geq s,\theta}$, as~well as $\wt q^{-1}(\theta)$, has the topology of a closed disc.

On the other hand, we~have $\bR\wt q_*(\wt\scrL_{<0}\otimes\bmk_{\wt\cS^1_{t\geq s}})\simeq\bR\wt q_*(\wt\scrL_{<0}\otimes\bmk_{\wt\cS^1_{t<s}}[1])$, because of Lemma \ref{lem:qcLneg}. For $\theta\in S^1_\tau$, we~have $\wt\scrL_{<0,\theta}\simeq\bigoplus_{c\in C}(\gr_c\cL_\theta\otimes\bmk_{\wt\cS^1_{c<t,\theta}})$, so that
\[
\wt\scrL_{<0,\theta}\otimes\bmk_{\wt\cS^1_{t< s},\theta}\simeq\bigoplus_{c\in C}(\gr_c\cL_\theta\otimes\bmk_{\wt\cS^1_{c<t<s,\theta}}).
\]
The term of index $c$ in the sum vanishes iff $c\not<_\theta s$. On the other hand, if $c<_\theta s$, we~have $\bR\Gamma(\wt\PP^1_t,\bmk_{\wt\cS^1_{c<t,\theta}})=0$ by Lemma \ref{lem:vanishing} and an exact sequence
\[
0\to\bmk_{\wt\cS^1_{c<t<s,\theta}}\to\bmk_{\wt\cS^1_{c<t,\theta}}\to\bmk_{\wt\cS^1_{t\geq s}}\to0.
\]
From the above isomorphisms we obtain
\begin{align*}
\bR\Gamma(\wt\PP^1_t,\wt\scrL_{<0,\theta}\otimes\bmk_{\wt\cS^1_{t\geq s,\theta}})
&\simeq\bR\wt q_*(\wt\scrL_{<0}\otimes\bmk_{\wt\cS^1_{t<s}}[1])\\
&\simeq\bigoplus_{c\in C}(\gr_c\cL_\theta\otimes\bR\Gamma(\wt\PP^1_t,\bmk_{\wt\cS^1_{c<t<s,\theta}}[1]))\\
&\simeq\bigoplus_{c<_\theta s}(\gr_c\cL_\theta\otimes\bR\Gamma(\wt\PP^1_t,\bmk_{\wt\cS^1_{t\geq s,\theta}}))\\
&\simeq\bigoplus_{c<_\theta s}\gr_c\cL_\theta=\cL_{<s,\theta}.\qedhere
\end{align*}
\end{proof}

\subsubsection{The local Laplace transformation on \texorpdfstring{$\Perv^0(\CC_t,\bmk,C)$}{Perv}}\label{subsec:FG}
Let us recall the construction of the functor $F_+^{\infty,\top}$ done in \cite[Chap.\,7]{Bibi10}. We~will work with constructible sheaves instead of perverse sheaves, for the sake of simplicity.

Let $\cF$ be a constructible sheaf in $\Perv^0(\CC_t,\bmk,C)[-1]$. Since it is locally constant near $t=\infty$, it extends as a sheaf $\wt\cF=j_{t,*}\cF\simeq\bR j_{t,*}\cF$ on $\wt\PP^1_t$ which is locally constant near $S^1_t$. Recall that $\wt\cS^1_<$ is defined in \eqref{eq:S1<}.

\begin{proposition}[{\cite[Prop.\,7.11]{Bibi10}}]\label{prop:GL}
The complex $\bR\wt q_*(\wt p^{-1}\wt\cF\otimes\bmk_{\wt\cS^1_{t<s}})$ on $S^1_\tau$ has cohomology in degree one at most for each $s\in\CC$, as~well as the complex \hbox{$\bR\wt q_*(\wt p^{-1}\wt\cF\otimes\bmk_{\wt\cS^1_<})$}. Moreover, $\cL':=R^1\wt q_*(\wt p^{-1}\wt\cF\otimes\bmk_{\wt\cS^1_<})$ is a locally constant sheaf on~$S^1_\tau$ and $\cL'_{<s}\!:=\!R^1\wt q_*(\wt p^{-1}\wt\cF\otimes\bmk_{\wt\cS^1_{t<s}})$ defines a co-Stokes filtration indexed by~$C$.\qed
\end{proposition}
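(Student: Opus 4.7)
\emph{Proof plan.} The strategy is to compute stalks at $\theta\in S^1_\tau$ via proper base change and exploit the hypothesis $\bR\Gamma(\CC_t,\cF)=0$ to trade the open-support computation for a closed-support one on the complementary half-disc $\wt\cS^1_{t\geq s,\theta}$.

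Since $\wt\PP^1_t$ is compact, $\wt q$ is proper, so by proper base change
\[
\bigl(\bR\wt q_*(\wt p^{-1}\wt\cF\otimes\bmk_{\wt\cS^1_{t<s}})\bigr)_\theta\simeq \bR\Gamma(\wt\PP^1_t,\wt\cF\otimes\bmk_{\wt\cS^1_{t<s,\theta}}).
\]
I would apply $\bR\Gamma(\wt\PP^1_t,\wt\cF\otimes-)$ to the fiber analogue of the short exact sequence \eqref{eq:exactS}. Because $\cF$ is locally constant near $\infty$ we have $\wt\cF=j_{t,*}\cF$ and hence $\bR\Gamma(\wt\PP^1_t,\wt\cF)=\bR\Gamma(\CC_t,\cF)=0$, which annihilates the middle term of the resulting long exact sequence and yields canonical isomorphisms
\[
H^k(\wt\PP^1_t,\wt\cF\otimes\bmk_{\wt\cS^1_{t<s,\theta}})\simeq H^{k-1}(\wt\cS^1_{t\geq s,\theta},\wt\cF|_{\wt\cS^1_{t\geq s,\theta}}).
\]
Concentration of the pushforward in degree one is thus equivalent to concentration of $\bR\Gamma(\wt\cS^1_{t\geq s,\theta},\wt\cF|)$ in degree zero.

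The fiber $\wt\cS^1_{t\geq s,\theta}$ is a compact contractible closed half-disc in $\wt\PP^1_t$ (the complement of the open set described in Lemma~\ref{lem:wtccS^1}), bounded in $\CC_t$ by a real line through $s$ and at infinity by a closed arc of length~$\pi$. Local constancy of $\cF$ near $\infty$ lets one retract the boundary arc into $\CC_t$, giving $\bR\Gamma(\wt\cS^1_{t\geq s,\theta},\wt\cF|)\simeq\bR\Gamma(K,\cF|_K)$ for a compact half-disc $K\subset\CC_t$ containing $C$. The degree-zero concentration would then combine contractibility of $K$ with the perverse structure of $\cF[1]$: a Mayer--Vietoris cover by small discs around each $c\in C\cap K$ and a contractible generic piece reduces the vanishing of $H^1$ to local calculations, where the perversity conditions (in particular the vanishing $\cH^0(\cF[1])=0$, so that $\cF$ has no skyscraper component) rule out $H^1$ contributions at each stratum. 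For the local constancy of $\cL'$ on $S^1_\tau$ and the co-Stokes property: as $\theta$ varies within an arc avoiding the finitely many ``Stokes angles'' (those at which the boundary line through $s$ meets a point of $C$), the pair $(\wt\cS^1_{t\geq s,\theta},\wt\cF|)$ is isotopically constant, so the cohomology sheaf is locally constant there; the nested inclusions $\wt\cS^1_{t<s'}\subset\wt\cS^1_{t<s}$ for $s'<_\theta s$ then yield the filtration $\cL'_{<s'}\hookrightarrow\cL'_{<s}$ indexed by $C$, with associated graded controlled by the topological jumps across Stokes angles.

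The main obstacle is the degree-zero concentration step for $\bR\Gamma(K,\cF|_K)$: contractibility of $K$ alone only bounds the cohomology by degree one, and killing $H^1$ requires the perverse structure of $\cF[1]$ to be propagated through the local picture at each singular point. Thus the vanishing global hypercohomology hypothesis enters through the open-closed swap, while the perverse structure enters through the local analysis on $K$; the co-Stokes axioms and the indexing by $C$ are then extracted from the geometry of the varying half-plane as $s$ moves relative to $C$.
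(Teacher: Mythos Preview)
The paper does not prove this proposition; it is cited from \cite[Prop.\,7.11]{Bibi10} and closed with \qed. So there is no in-paper argument to compare against, and the question is whether your sketch stands on its own.

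Your proper-base-change reduction and the open/closed swap via \eqref{eq:exactS}, using $\bR\Gamma(\wt\PP^1_t,\wt\cF)=0$, are correct. The gap is the step where you assert that ``the perversity conditions \ldots\ rule out $H^1$ contributions'' on the closed half-disc $K=\wt\cS^1_{t\geq s,\theta}$. Perversity of $\cF[1]$ alone does \emph{not} force $H^1(K,\cF|_K)=0$: take $\cF=j_!\bmk_{\CC_t\setminus\{c_1,c_2\}}$ with $c_1,c_2$ in the interior of $K$. Then $\cF[1]$ is perverse (no subsheaf supported at a point), yet the exact sequence $0\to\cF|_K\to\bmk_K\to i_*\bmk^2\to0$ gives $H^1(K,\cF|_K)\simeq\operatorname{coker}(\bmk\xrightarrow{(1,1)}\bmk^2)=\bmk\neq0$. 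So a Mayer--Vietoris cover of $K$ by small discs around the $c_i$ and a ``contractible generic piece'' cannot succeed using only local perversity data; the generic piece is homotopy equivalent to a wedge of circles and carries nontrivial $H^1$ that is \emph{not} cancelled by the local pieces in general. Your final paragraph explicitly allocates the vanishing hypothesis to the swap and perversity to the $H^1(K)$ step, and it is precisely this allocation that fails.

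What does work is to use the global vanishing a second time. Cover $\wt\PP^1_t$ by the two \emph{closed} half-discs $K=\wt\cS^1_{t\geq s,\theta}$ and $K'=\wt\cS^1_{t\leq s,\theta}$. Their intersection is the real line through $s$ together with the two boundary points on $S^1_t$, a contractible arc on which $\wt\cF$ is the constant sheaf with stalk the generic fibre $V$ (for $\theta$ avoiding the finitely many directions where some $c\in C$ lies on the line). The closed-cover Mayer--Vietoris sequence
\[
H^k(\wt\PP^1_t,\wt\cF)\to H^k(K,\wt\cF)\oplus H^k(K',\wt\cF)\to H^k(K\cap K',\wt\cF)\to H^{k+1}(\wt\PP^1_t,\wt\cF)
\]
together with $H^*(\wt\PP^1_t,\wt\cF)=0$ and $H^{\geq1}(K\cap K',\wt\cF)=0$ immediately gives $H^1(K,\wt\cF)=H^1(K',\wt\cF)=0$, which is exactly what you need after the swap. (The finitely many excluded $\theta$ are handled by a limit or a small perturbation, since the resulting sheaves are constructible.) So the global hypothesis enters \emph{twice}: once for the swap, and once more---contrary to your sketch---to kill $H^1$ on the closed half-disc.
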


From the exact sequence \eqref{eq:exactS} and since $\wt\cF$ has vanishing global cohomology, we~deduce
\begin{equation}\label{eq:GL}
\cL'_{<s}\simeq \wt q_*(\wt p^{-1}\wt\cF\otimes\bmk_{\wt\cS^1_{t\geq s}})\quand R^k\wt q_*(\wt p^{-1}\wt\cF\otimes\bmk_{\wt\cS^1_{t\geq s}})=0\text{ for }k\geq1.
\end{equation}

\begin{corollaire}\label{cor:GL}
For $s\in\CC_t$ fixed, we~have $H^k(S^1_\tau,\cL'_{<s})=0$ for $k\neq1$ and a canonical identification $H^1(S^1_\tau,\cL'_{<s})\simeq\cF_s$. In~particular, $\dim\cF_s=\rk\cL'$ if $s\notin C$.
\end{corollaire}

\begin{proof}
The last part follows from Lemma \ref{lem:HkFL}. For the first part, it suffices to show, according to  \eqref{eq:GL}, that $\bR\Gamma(S^1_\tau,\cL'_{<s})\simeq\cF_s[-1]$. We~write
\begin{align*}
\bR\Gamma(S^1_\tau,\cL'_{<s})&\simeq\bR\Gamma(S^1_\tau,\bR\wt q_*(\wt p^{-1}\wt\cF\otimes\bmk_{\wt\cS^1_{t\geq s}}))
\simeq\bR\Gamma(S^1_\tau\times\wt\PP^1_t,(\wt p^{-1}\wt\cF\otimes\bmk_{\wt\cS^1_{t\geq s}}))\\
&\simeq\bR\Gamma(\wt\PP^1_t,\bR\wt p_*(\wt p^{-1}\wt\cF\otimes\bmk_{\wt\cS^1_{t\geq s}}))
\simeq\bR\Gamma(\wt\PP^1_t,\wt\cF\otimes\bR\wt p_*\bmk_{\wt\cS^1_{t\geq s}}).
\end{align*}
As $\bR\Gamma(\wt\PP^1_t,\wt\cF)=0$, the conclusion follows from Lemma \ref{lem:wtccS^1cohom}.
\end{proof}

\subsubsection{Proof of Proposition \ref{prop:Laplacelocsyst}}\label{subsec:proofLaplacelocsyst}
Now that we have at our disposal both functors of~\eqref{eq:Ftop}, we~will prove that they are quasi-inverse from each other.

Given $\cF$ in $\Perv^0(\CC_t,\bmk,C)[-1]$ and extended as $\wt\cF$ to $\PP^1_t$, we~reconstruct the sheaf $\scrL_{<0}$ by varying the parameter~$s$ in Proposition \ref{prop:GL}. For that purpose, we~consider the product $S^1_\tau\times\wt\PP_t\times\CC_s$ and the projections $\wt p_t,\wt p_s,\wt q_s$ to~$\wt\PP_t$, $\wt\PP_s$ and $S^1_\tau\times\CC_s$ respectively. We~consider the subset $\wt\scrS^1_{t<s}$ of $S^1_\tau\times\wt\PP_t\times\CC_s$ whose fiber at $s\in\CC_s$ is the set $\wt\cS^1_{t<s}$. We~also set $\wt\scrS^1_<=\wt\cS^1_<\!\times\CC_s$. Therefore, $\wt\scrS^1_{t<s}\subset\wt\scrS^1_<$. Furthermore, the intersection $\wt\scrS^1_{t<s}\cap(S^1_\tau\times S^1_t\times\CC_s)$ is equal to the product $\wt\cS^1_{\infty,<}\times\CC_s$.

\begin{lemme}
The subset $\wt\scrS^1_{t<s}$ is open in $S^1_\tau\times\wt\PP_t\times\CC_s$.\qed
\end{lemme}

Proposition \ref{prop:GL} in family reads:

\begin{proposition}\label{prop:GLfamily}
The complex $\bR\wt q_{s,*}(\wt p_t^{-1}\wt\cF\otimes\bmk_{\wt\scrS^1_{t<s}})$ on $S^1_\tau\times\CC_s$ has cohomology in degree one at most, as~well as the complex $\bR\wt q_{s,*}(\wt p_t^{-1}\wt\cF\otimes\bmk_{\wt\scrS^1_<})$. Moreover, the sheaf $\scrL'=R^1\wt q_{s,*}(\wt p_t^{-1}\wt\cF\otimes\bmk_{\wt\scrS^1_<})$ is isomorphic to $\wt q_s^{-1}\cL'$ (defined by Proposition \ref{prop:GL}) and the sheaf $\scrL_{\sX,<0}=R^1\wt q_{s,*}(\wt p_t^{-1}\wt\cF\otimes\bmk_{\wt\scrS^1_{t<s}})$ satisfies Lemma \ref{lem:Lneg}. Furthermore, we~have a canonical identification
\[
\bR\wt p_{s,*}\scrL_{\sX,<0}[1]\simeq\cF,
\]
where $\cF$ is now regarded on $\CC_s$.
\end{proposition}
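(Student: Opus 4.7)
The strategy is to deduce the family statement fiberwise from Proposition~\ref{prop:GL} via proper base change, and to identify $\bR\wt p_{s,*}\scrL_{\sX,<0}[1]$ with $\cF$ by combining the family analogue of Lemma~\ref{lem:wtccS^1cohom} with the projection formula, the global vanishing $\bR\Gamma(\wt\PP^1_t,\wt\cF)=0$, and the observation that the diagonal $\Delta:=\{(t,s):t=s\}\subset\CC_t\times\CC_s\subset\wt\PP^1_t\times\CC_s$ maps isomorphically to $\CC_s$ under the second projection. Since $\wt\PP^1_t$ is compact, $\wt q_s$ is proper, and the inclusion $S^1_\tau\times\{s_0\}\hookrightarrow S^1_\tau\times\CC_s$ gives, via proper base change, an identification of the restriction of $\bR\wt q_{s,*}(\wt p_t^{-1}\wt\cF\otimes\bmk_{\wt\scrS^1_{t<s}})$ (\resp of $\bmk_{\wt\scrS^1_<}$) to $S^1_\tau\times\{s_0\}$ with the complex $\bR\wt q_*(\wt p^{-1}\wt\cF\otimes\bmk_{\wt\cS^1_{t<s_0}})$ (\resp of $\bmk_{\wt\cS^1_<}$) from Proposition~\ref{prop:GL}, hence concentrated in degree one. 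The product structure $\wt\scrS^1_<=\wt\cS^1_<\times\CC_s$ together with the fact that $\wt p_t^{-1}\wt\cF$ is pulled back from $\wt\PP^1_t$ shows that the first pushforward is itself pulled back from $S^1_\tau$, yielding $\scrL'\simeq \wt q_s^{-1}\cL'$. The inclusion $\wt\scrS^1_{t<s}\hookrightarrow \wt\scrS^1_<$ induces an injection $\scrL_{\sX,<0}\hookrightarrow\scrL'$ whose restriction to $S^1_\tau\times\{s_0\}$ is $\cL'_{<s_0}\hookrightarrow\cL'$, and the uniqueness in the family-over-$\CC_s$ version of Lemma~\ref{lem:Lneg} gives the claim about $\scrL_{\sX,<0}$.

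For the final identification, set $\widetilde\sigma:=\wt p_s\circ\wt q_s:S^1_\tau\times\wt\PP^1_t\times\CC_s\to\CC_s$. Since $\bR\wt q_{s,*}(\wt p_t^{-1}\wt\cF\otimes\bmk_{\wt\scrS^1_{t<s}})\simeq \scrL_{\sX,<0}[-1]$, we have $\bR\wt p_{s,*}\scrL_{\sX,<0}[1]\simeq \bR\widetilde\sigma_*(\wt p_t^{-1}\wt\cF\otimes\bmk_{\wt\scrS^1_{t<s}})[2]$, so it is enough to prove $\bR\widetilde\sigma_*(\wt p_t^{-1}\wt\cF\otimes\bmk_{\wt\scrS^1_{t<s}})\simeq\cF[-2]$. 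Tensoring the exact sequence $0\to\bmk_{\wt\scrS^1_{t<s}}\to\bmk\to\bmk_{\wt\scrS^1_{t\geq s}}\to0$ with $\wt p_t^{-1}\wt\cF$ and applying $\bR\widetilde\sigma_*$, the middle term reads $\bR\Gamma(\wt\PP^1_t,\wt\cF)\otimes_\bmk\bR\Gamma(S^1_\tau,\bmk)\otimes_\bmk\bmk_{\CC_s}$ and vanishes since $\bR\Gamma(\wt\PP^1_t,\wt\cF)=0$. It thus remains to check that $\bR\widetilde\sigma_*(\wt p_t^{-1}\wt\cF\otimes\bmk_{\wt\scrS^1_{t\geq s}})\simeq\cF[-1]$.

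To this end, factor $\widetilde\sigma=\sigma_1\circ\sigma_2$, where $\sigma_2:S^1_\tau\times\wt\PP^1_t\times\CC_s\to\wt\PP^1_t\times\CC_s$ omits $S^1_\tau$ and $\sigma_1:\wt\PP^1_t\times\CC_s\to\CC_s$ omits $\wt\PP^1_t$; both are proper. Writing $\wt p_t^{-1}\wt\cF=\sigma_2^{-1}\cF'$ with $\cF':=\pi_1^{-1}\wt\cF$ (where $\pi_1:\wt\PP^1_t\times\CC_s\to\wt\PP^1_t$), the projection formula yields $\bR\sigma_{2,*}(\sigma_2^{-1}\cF'\otimes\bmk_{\wt\scrS^1_{t\geq s}})\simeq \cF'\otimes\bR\sigma_{2,*}\bmk_{\wt\scrS^1_{t\geq s}}$. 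The family analogue of Lemma~\ref{lem:wtccS^1cohom}, obtained fiberwise by proper base change from the description of the fibers of $\wt\scrS^1_{t\geq s}$ over $\wt\PP^1_t\times\CC_s$ (a closed half-circle outside $\Delta$, the full circle $S^1_\tau$ over $\Delta$), gives $R^0\sigma_{2,*}\bmk_{\wt\scrS^1_{t\geq s}}\simeq \bmk_{\wt\PP^1_t\times\CC_s}$ and $R^1\sigma_{2,*}\bmk_{\wt\scrS^1_{t\geq s}}\simeq\bmk_\Delta$, with higher direct images vanishing. Tensoring the truncation triangle with $\cF'$ and applying $\bR\sigma_{1,*}$, the $\cF'$ part pushes to $\bR\Gamma(\wt\PP^1_t,\wt\cF)\otimes\bmk_{\CC_s}=0$, while $\cF'\otimes\bmk_\Delta[-1]$, pushed via the isomorphism $\sigma_1|_\Delta:\Delta\xrightarrow{\sim}\CC_s$, yields $\cF[-1]$; combining these via the triangle gives $\bR\widetilde\sigma_*(\wt p_t^{-1}\wt\cF\otimes\bmk_{\wt\scrS^1_{t\geq s}})\simeq\cF[-1]$, as required.

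The main obstacle is the family version of Lemma~\ref{lem:wtccS^1cohom}: away from the diagonal, the fibers of $\sigma_2|_{\wt\scrS^1_{t\geq s}}$ are closed half-circles (contractible), while over $\Delta$ the convention $S^1_{0\geq0}=S^1$ makes them jump to the full circle $S^1_\tau$, producing the class in $R^1\sigma_{2,*}$ supported on $\Delta$. Once this semicontinuous degeneration is correctly identified (and the naturality of all isomorphisms is tracked so that the resulting map $\bR\wt p_{s,*}\scrL_{\sX,<0}[1]\to\cF$ is canonical and reduces fiberwise to the isomorphism of Corollary~\ref{cor:GL}), the remainder is a bookkeeping exercise in derived functor manipulations.
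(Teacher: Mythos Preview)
Your proof is correct and follows the same strategy as the paper, which argues fiberwise via proper base change along the projection $S^1_\tau\times\wt\PP^1_t\times\CC_s\to\CC_s$ and then invokes Proposition~\ref{prop:GL} and Corollary~\ref{cor:GL}. The paper's own proof is two sentences; you have unpacked the citation of Corollary~\ref{cor:GL} into its family form (exact sequence \eqref{eq:exactS}, projection formula, family version of Lemma~\ref{lem:wtccS^1cohom} with the $R^1$ supported on the diagonal), which is exactly how that corollary is proved pointwise.
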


This proposition yields an isomorphism of functors $F^{\infty,\top}_-\circ F^{\infty,\top}_+\isom\id$.

\begin{proof}
The morphisms are defined in a way similar to what is done in Section \ref{subsec:FG}. We~then argue fiber by fiber with respect to the proper projection $S^1_\tau\times\wt\PP^1_t\times\CC_s\to\CC_s$ and apply Proposition \ref{prop:GL} and Corollary \ref{cor:GL}.
\end{proof}

We now construct an isomorphism of functors $\id\isom F^{\infty,\top}_+\circ F^{\infty,\top}_-$. For that purpose, we~start from $(\cL,\cL_\bbullet)$ and \emph{define} $\cF:=\bR\wt p_*\scrL_{<0}[1]$, with $\scrL_{<0}$ as in Section~\ref{subsec:FStokes}. Then~$\cF$ is an object of $\Perv^0(\CC_t,\bmk,C)[-1]$ (Proposition \ref{prop:pushLneg}). With the previous notation, we~then have $\wt\cF=\bR\wt p_*\wt\scrL_{<0}[1]$. We~denote by $(\cL',\cL'_\bbullet)$ the Stokes-filtered local system $F^{\infty,\top}_+(\cF)$.

\begin{proposition}\label{prop:L'L}
There exists a canonical morphism of Stokes-filtered local systems
\[
(\cL,\cL_\bbullet)\to(\cL',\cL'_\bbullet)
\]
such that the induced morphism $\cL'\to\cL$ is an isomorphism of local systems.
\end{proposition}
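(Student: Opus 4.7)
The plan is to exploit the two formulas already at hand: Proposition~\ref{prop:LneqL} cuts out $\cL_{<s}$ inside $\cL$ as $\wt q_*(\wt\scrL_{<0}\otimes\bmk_{\wt\cS^1_{t\geq s}})$, while \eqref{eq:GL} identifies $\cL'_{<s}$ with $\wt q_*(\wt p^{-1}\wt\cF\otimes\bmk_{\wt\cS^1_{t\geq s}})$. Since $\cF=\bR\wt p_*\scrL_{<0}[1]$, Lemma~\ref{lem:wtcL} identifies $\wt\cF$ on $\wt\PP^1_t$ with $\bR\wt p_*\wt\scrL_{<0}[1]$, and the adjunction counit for $(\wt p^{-1},\bR\wt p_*)$ furnishes a canonical morphism
\[
\alpha\colon \wt p^{-1}\wt\cF\longrightarrow\wt\scrL_{<0}[1]
\]
on $S^1_\tau\times\wt\PP^1_t$. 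First I would tensor $\alpha$ with $\bmk_{\wt\cS^1_{t\geq s}}$ and apply $\bR\wt q_*$; using the vanishings in Proposition~\ref{prop:LneqL} and in \eqref{eq:GL} to concentrate both sides in a single degree, this yields a compatible family of morphisms $\cL'_{<s}\to\cL_{<s}$ for every $s\in\CC$, hence in particular a morphism $\cL'\to\cL$ on the underlying local systems. These morphisms then assemble into the required morphism of Stokes-filtered local systems.

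The next step is to check fibrewise that $\cL'\to\cL$ is an isomorphism of local systems. For $\theta\in S^1_\tau$, pick an open interval $U\subset S^1_\tau$ containing $\theta$ and disjoint from the Stokes directions of $C$. By \cite[Lem.\,3.12]{Bibi10}, $(\cL,\cL_\bbullet)|_U$ admits a graded splitting $\bigoplus_{c\in C}\gr_c\cL|_U$, which induces the corresponding splitting
\[
\wt\scrL_{<0}|_{U\times\wt\PP^1_t}\simeq\bigoplus_{c\in C}(\wt q^{-1}\gr_c\cL|_U)\otimes\bmk_{\wt\cS^1_{c<t}|_{U\times\wt\PP^1_t}}.
\]
Because $\bR\wt p_*$ and $\bR\wt q_*$ both commute with direct sums, the verification reduces to a single exponential factor $c$, where it becomes a purely topological statement about the sets $\wt\cS^1_{c<t,\theta}$, $\wt\cS^1_{t\geq s,\theta}$ and their intersections $\wt\cS^1_{c<t<s,\theta}$, whose shape and cohomology are known from Lemma~\ref{lem:wtccS^1}, Lemma~\ref{lem:vanishing} and \eqref{eq:wtccS^1cohom}. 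The fibrewise identifications produced on either side recover on both sides the description $\cL_{<s,\theta}=\bigoplus_{c<_\theta s}\gr_c\cL_\theta$ already obtained in the proofs of Proposition~\ref{prop:LneqL} and of Corollary~\ref{cor:GL}.

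The main obstacle is to match these two independent fibrewise computations through the morphism $\alpha$: one must verify that the identification of $\cL'_{<s}$ arising from Proposition~\ref{prop:GL} and Corollary~\ref{cor:GL} and the identification of $\cL_{<s}$ arising from Proposition~\ref{prop:LneqL} are actually the two sides of the single map induced by $\alpha$. Once the problem has been cut down to a single $c$ and rephrased through the cofibre sequence~\eqref{eq:exactS}, the same acyclicity of the fibres $\wt\cS^1_{c<t,\theta}$ that produces the vanishing in Lemma~\ref{lem:qcLneg} forces the cone of $\alpha\otimes\bmk_{\wt\cS^1_{t\geq s}}$ to become acyclic after $\bR\wt q_*$, and so $\alpha$ induces an isomorphism at each filtration level. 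This simultaneously yields the morphism $(\cL,\cL_\bbullet)\to(\cL',\cL'_\bbullet)$, the fact that the induced map $\cL'\to\cL$ is an isomorphism of local systems, and the natural isomorphism $F^{\infty,\top}_+\circ F^{\infty,\top}_-\simeq\mathrm{id}$ announced after the statement.
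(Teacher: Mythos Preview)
Your construction has a degree mismatch that makes the argument break down. The counit of $(\wt p^{-1},\bR\wt p_*)$ applied to $\wt\scrL_{<0}$ gives $\wt p^{-1}\bR\wt p_*\wt\scrL_{<0}\to\wt\scrL_{<0}$, i.e.\ (since $\wt\cF=\bR\wt p_*\wt\scrL_{<0}[1]$) the morphism $\alpha\colon\wt p^{-1}\wt\cF\to\wt\scrL_{<0}[1]$ that you write. After tensoring with $\bmk_{\wt\cS^1_{t\geq s}}$ and applying $\bR\wt q_*$, the source is $\cL'_{<s}$ concentrated in degree~$0$ by~\eqref{eq:GL}, while the target is $\bR\wt q_*(\wt\scrL_{<0}\otimes\bmk_{\wt\cS^1_{t\geq s}})[1]\simeq\cL_{<s}[1]$, concentrated in degree~$-1$ by Proposition~\ref{prop:LneqL}. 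The resulting arrow is a class in $\Ext^1(\cL'_{<s},\cL_{<s})$, not a morphism of sheaves; taking $\cH^0$ gives the zero map, so no morphism of Stokes filtrations is produced. Consequently your cone argument cannot work either: the cone of $\alpha\otimes\bmk_{\wt\cS^1_{t\geq s}}$ cannot become $\bR\wt q_*$-acyclic, since its source and target live in different degrees and are both nonzero.

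The paper avoids this by using the \emph{other} adjunction. Since $\wt p$ is proper and smooth of real relative dimension one (the fibre is $S^1_\tau$), Poincar\'e bi-duality gives $\wt p^{!}\simeq\wt p^{-1}[1]$, and the unit of $(\bR\wt p_!,\wt p^{!})=(\bR\wt p_*,\wt p^{!})$ yields a canonical morphism
\[
\wt\scrL_{<0}\longrightarrow \wt p^{-1}\bR\wt p_*\wt\scrL_{<0}[1]=\wt p^{-1}\wt\cF,
\]
which goes in the direction required by the statement and sits in the correct degree (this is \eqref{eq:dualadj}). Applying $\wt q_*(\,\cdot\,\otimes\bmk_{\wt\cS^1_{t\geq s}})$ then gives the map $\cL_{<s}\to\cL'_{<s}$ of sheaves. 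The paper also checks the isomorphism differently from your plan: rather than a graded cone computation, it fixes $\theta$ and $s$ with $\wt\cS^1_{t\geq s,\theta}\cap C=\emptyset$, identifies the fibre of the map with $\cL_{<s,\theta}\to H^1(S^1_\tau,\cL_{<s})$, and proves surjectivity by Verdier-dualizing back to the injective adjunction morphism $H^0(S^1_\tau,(\cL_{<s})^\vee)\hookrightarrow(\cL_{<s})^\vee_\theta$ (using that $(\cL_{<s})^\vee$ is a sheaf). Equality of ranks from Lemma~\ref{lem:HkFL} and Corollary~\ref{cor:GL} then finishes.
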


Since the category of Stokes-filtered local systems is abelian (\cf\eg \cite[Chap.\,3]{Bibi10}), that $\cL\to\cL'$ is an isomorphism implies that the corresponding morphism $(\cL,\cL_\bbullet)\to(\cL',\cL'_\bbullet)$ of Stokes-filtered local systems is an isomorphism. We~conclude that there exists an isomorphism of functors $\id\isom F^{\infty,\top}_+\circ F^{\infty,\top}_-$.

\begin{proof}[Proof of Proposition \ref{prop:L'L}]
We first define, for each $s\!\in\!\CC_t$, a canonical morphism
\begin{equation}\label{eq:LLneg}
\cL_{<s}\to\cL'_{<s}:=\bR\wt q_*(\wt p^{-1}\wt\cF\otimes\bmk_{\wt\cS^1_{t<s}})[1].
\end{equation}
As in \eqref{eq:GL}, we~write the right-hand side as
\[
\cL'_{<s}=\wt q_*(\wt p^{-1}\wt\cF\otimes\bmk_{\wt\cS^1_{t\geq s}})\simeq\bR\wt q_*(\wt p^{-1}\wt\cF\otimes\bmk_{\wt\cS^1_{t\geq s}}),
\]
since we know by (the proof of) Lemma \ref{lem:qcLneg} that $\wt\cF=\bR\wt p_*\wt\scrL_{<0}[1]$ has vanishing global cohomology. On the other hand, since $\wt\scrL_{<0}$ is an $\RR$-constructible sheaf on $S^1_\tau\times\wt\PP^1_t$ and since $\wt p$ is smooth of relative dimension one, we~can apply Poincaré bi-duality to the adjunction morphism $\wt p^{-1}\bR\wt p_*\to\id$ to obtain a canonical morphism
\begin{equation}\label{eq:dualadj}
\wt\scrL_{<0}\to\wt p^{-1}\bR\wt p_*\wt\scrL_{<0}[1]=\wt p^{-1}\wt\cF.
\end{equation}
From Proposition \ref{prop:LneqL} we obtain the desired morphism \eqref{eq:LLneg}:
\begin{equation}\label{eq:LLnegbis}
\wt q_*(\wt\scrL_{<0}\otimes\bmk_{\wt\cS^1_{t\geq s}})\to\wt q_*(\wt p^{-1}\wt\cF\otimes\bmk_{\wt\cS^1_{t\geq s}}).
\end{equation}
This is a morphism of Stokes-filtered local systems, that is, for any $\theta\in S^1_\tau$, if $s<_\theta s'$, the following diagram commutes:
\[
\xymatrix{
\cL_{<s,\theta}\ar[r]\ar@{^{ (}->}[d]&\cL'_{<s,\theta}\ar@{^{ (}->}[d]\\
\cL_{<s',\theta}\ar[r]&\cL'_{<s',\theta}
}
\]
We wish to prove that the morphism $\cL\to\cL'$ that we deduce from these morphisms is an isomorphism. We~know from Lemma \ref{lem:HkFL} and Corollary \ref{cor:GL} that~$\cL$ and~$\cL'$ have the same rank, so it is enough to prove that it is onto. We~can also fix $\theta\in S^1_\tau$, and we note that, for $s\in\CC_t$ such that $\wt\cS^1_{t\geq s,\theta}$ is a closed half-plane not inter\-sect\-ing~$C$, we~have $\cL_{<s,\theta}=\cL_\theta$, and $\cL'_{<s,\theta}=\cL'_\theta$. We~then fix such an~$s$. The fiber at $\theta$ of the morphism \eqref{eq:LLnegbis} reads
\[
\Gamma(\wt\PP^1_t,\scrL_{<0,\theta}\otimes\bmk_{\wt\cS^1_{t\geq s,\theta}})\to\Gamma(\wt\PP^1_t,\wt\cF\otimes\bmk_{\wt\cS^1_{t\geq s,\theta}})
\]
and since the sheaves $\scrL_{<0,\theta}$ and $\wt\cF$ are locally constant on the closed disc $\wt\cS^1_{t\geq s,\theta}$, the above morphism is nothing but the morphism between their fibers at $s$, which reads\vspace*{-3pt}
\begin{equation}\label{eq:LH1L}
\cL_{<s,\theta}\to H^1(S^1_\tau,\cL_{<s})
\end{equation}
and is the fiber at $(\theta,s)$ of \eqref{eq:dualadj}. We~are thus left with proving that the latter is onto. By~construction, this morphism is the fiber at $\theta$ of the morphism\vspace*{-3pt}
\[
\cL_{<s}\to H^1(S^1_\tau,\cL_{<s})\boxtimes\CC_{S^1_\tau}
\]
which is dual to the adjunction morphism\vspace*{-3pt}
\[
\bH^0(S^1_\tau,(\cL_{<s})^\vee)\boxtimes\CC_{S^1_\tau}\to(\cL_{<s})^\vee,
\]
where $\cI^\vee$ denotes the Poincaré dual complex $\rHom(\cI,\CC_{S^1_\tau})$. On the other hand, we~know (\cf\cite[Lem.\,2.16]{Bibi10}) that the Poincaré dual complex $\rHom(\cL_{<s},\CC_{S^1_\tau})$ is a sheaf (\ie concentrated in degree zero), so that the adjunction morphism
\[
H^0(S^1_\tau,(\cL_{<s})^\vee)\boxtimes\CC_{S^1_\tau}\to(\cL_{<s})^\vee
\]
is injective. By~duality, we~obtain that \eqref{eq:LH1L} is onto, as~desired.
\end{proof}

\subsubsection{Proof of the equivalence \texorpdfstring{\eqref{eq:equivPervSt}}{equiv}}\label{subsec:equivPervSt}
In a first step, we~work with the real blow-up~$\wh\PP^1_\tau$ of~$\PP^1_\tau$ at~$\infty$ and $0$, so that $\wh\PP^1_\tau$ is homeomorphic to a cylinder $S^1_\tau\times[0,\infty]$. A~Stokes-constructible sheaf on $\wh\PP^1_\tau$ consists of a local system $\wh\cL$ on $\wh\PP^1_\tau$ together with a Stokes filtration on the local systems $\cL=\whi_\infty^{-1}\wh\cL$ and $\whi_0^{-1}\wh\cL$. In~our setting, we~consider a Stokes filtration $\cL_\bbullet$ of exponential type $C$ on $\cL$ and a trivial Stokes filtration on $\whi_0^{-1}\wh\cL$. Furthermore, due to Lemma \ref{lem:recol}, there exists a unique filtration~$\wh\cL_\bbullet$ of~$\wh\cL$ such that the restriction of $\wh\cL_\bbullet$ away from $S^1_\tau\times\{\infty\}$ is equal to the restriction of $\wh\cL$ (\ie is the trivial filtration), while its restriction to $S^1_\tau\times\{\infty\}$ is equal to $\cL_\bbullet$.

Since $\wh\cL$ is the pullback of $\cL$ by the projection $\wh\PP^1_\tau\to S^1_\tau$, Lemma \ref{lem:recol} yields an equivalence\vspace*{-5pt}
\begin{equation}\label{eq:wtLL}
(\wh\cL,\wh\cL_\bbullet)\To{\whi_\infty^{-1}}(\cL,\cL_\bbullet).
\end{equation}

\begin{lemme}
The shifted Stokes-constructible sheaf $(\wh\cL,\wh\cL_\bbullet)[1]$ is a Stokes-perverse sheaf on $\wh\PP^1_\tau$, in the sense of \cite[Def.\,4.12]{Bibi10}.
\end{lemme}

\begin{proof}
It is a matter of proving that, \emph{for any $s\in\CC$, the Poincaré-Verdier dual complexes $\bD(\wt\cL_{<s}[1])$ and $\bD(\wh\cL_{\leq s}[1])$ have nonzero cohomology in degree one only}. This being clear away from $S^1_\tau\times\{\infty\}$, where both are equal to the shifted local system $\wh\cL[1]$, the question is local at a point $(\theta,\infty)$ of $S^1_\tau\times\{\infty\}$. We~can the assume that $(\cL,\cL_\bbullet)$ is graded, as~well as $(\wh\cL,\wh\cL_\bbullet)$. We~consider an open neighborhood of the form $I\times[0,\epsilon)$ for some small interval $I$ of $S^1_\tau$ containing $\theta$. We~set $V=I\times(0,\epsilon)$, $\ov V= I\times[0,\epsilon)$ and, decomposing $I$ as $I_1\sqcup I_2\sqcup\{\theta\}$, where $\theta$ is the common boundary point of the open intervals $I_1,I_2$, we~set $\ov V_i=V\sqcup (I_i\times\{0\})$, so that $V\subset \ov V_i\subset\ov V$ ($i=1,2$). We~are led to consider three cases for either $\wh\cL_{<s}[1]$ or $\wh\cL_{\leq s}[1]$, depending on whether $\theta$ is a Stokes direction for $s$ or not:
\begin{enumerate}
\item
the shifted sheaf $\bmk_V[1]$ (\ie its extension by zero to $I\times\{0\}$),
\item
the shifted sheaf $\bmk_{\ov V}[1]$,
\item
the shifted sheaf $\bmk_{\ov V_i}[1]$ ($i=1,2$).
\end{enumerate}
Then one checks that the Poincaré-Verdier dual of $\bmk_V[1]$ is $\bmk_{\ov V}[1]$, and that of $\bmk_{\ov V_1}[1]$ is $\bmk_{\ov V_2}[1]$ (and conversely). This yields the assertion, which proves the lemma.
\end{proof}

Let us now consider the real blow-up map $\varpi_0:\wh\PP^1_\tau\to\wt\PP^1_\tau$ of $\wt\PP^1_\tau$ at $\tau=0$. Then arguing as in \cite[Prop.\,4.22]{Bibi10}, one checks that $(\wt\cG,\wt\cG_\bbullet):=\bR\varpi_{0,*}(\wh\cL,\wh\cL_\bbullet)$ is an object of $\PervSt(\wt\PP^1_\tau,\bmk,C,*0)$ and furthermore the correspondence $(\wh\cL,\wh\cL_\bbullet)\mto(\cG,\cG_\bbullet)$ is an equivalence. Composing it with the equivalence \eqref{eq:wtLL} yields the equivalence \eqref{eq:equivPervSt}. In~particular, $\cG=\wt\cG|_{\CC_\tau}$, satisfies $\cG\simeq\bR j_{0*}j_0^{-1}\cG$, and $j_0^{-1}\cG[-1]$ is a locally constant sheaf on $\CC^*_\tau$, that can be identified with $\cL$.

We also notice the following:

\begin{lemme}\label{lem:FL}
For each $t\in\CC$ and $k\in\NN$, the following equalities hold:
\[
H^k(\wt\PP^1_\tau,\wt\cG_{<t})=H^k(S^1_\tau,\cL_{<t})\quand H^k(\wt\PP^1_\tau,\wt\cG_{\leq t})=H^k(S^1_\tau,\cL_{\leq t}).
\]
\end{lemme}

\begin{proof}
We argue for $\cG_{\leq t}$ for example. We~have a long exact sequence
\[
\cdots\to H^k_\rc(\CC_\tau,\cG)\to H^k(\wt\PP^1_\tau,\wt\cG_{\leq t})\to H^k(S^1_\tau,\cL_{\leq t})\to\cdots
\]
and $H^k_\rc(\CC_\tau,\cG)=0$ for any $k$, according to Lemma \ref{lem:vanishing}.
\end{proof}

\section{The Riemann-Hilbert correspondence}\label{sec:RH}

Let $\cM$ be a free $\CC\{\tau'\}[\tau^{\prime -1}]$-module of finite rank with a connection $\nabla$ having an irregular singularity of exponential type, with exponential factors $c/\tau'$, with $c$ in a finite subset $C\subset\CC$. In~other words, $\cM$ is an object of $\mathsf{Mero}(\infty,C)$. The construction of Deligne's meromorphic extension together with an algebraization procedure for meromorphic connections on $\PP^1_\tau$ yields a unique free $\CC[\tau,\tau^{-1}]$-module with connection~$M$ such that, setting $\tau'=\tau^{-1}$, we have $\CC\{\tau'\}\otimes_{\CC[\tau']}M\simeq\cM$ and the connection on $M$ has a regular singularity at $\tau=0$ and no other singularity in the complex plane $\CC_\tau$. The correspondence $\cM\mto M$ is a quasi-inverse functor to the functor $i_\infty^{-1}$ considered in the diagram \eqref{eq:main**}.

On the other hand, in the diagram \eqref{eq:main**}, we~\emph{define}
\[
F^\infty_\pm:\Mod_{\holreg}^0(\cD_{\Afu_t},C)\to\mathsf{Mero}(\infty,C)
\]
so that $F^\infty_\pm=i_\infty^{-1}\circ F_\pm$, and
\[
F^\infty_\mp:\mathsf{Mero}(\infty,C)\to\Mod_{\holreg}^0(\cD_{\Afu_t},C)
\]
so that $F_\mp\circ i_\infty^{-1}=F^\infty_\mp$. In~\cite[Chap.\,7]{Bibi10}, it is proved that the diagram
\[
\xymatrix@C=1.5cm{
\Mod_{\holreg}^0(\cD_{\Afu_t},C)\ar[d]_{\mathrm{RH}}\ar[r]^-{F^\infty_\pm}&\mathsf{Mero}(\infty,C)\ar[d]^-{\mathrm{RHB}}\\
\Perv^0(\CC_t,\CC,C)[-1]\ar[r]^-{F^{\infty,\top}_\pm}&\Sto(S^1_\tau,\CC,C)
}
\]
commutes. In~this section, we~prove that, with Definition \ref{def:Finftop} of $F^{\infty,\top}_\mp$, the diagram
\[
\xymatrix@C=1.5cm{
\Mod_{\holreg}^0(\cD_{\Afu_t},C)\ar[d]_{\mathrm{RH}}&\ar[l]_-{F^\infty_\mp}\mathsf{Mero}(\infty,C)\ar[d]^-{\mathrm{RHB}}\\
\Perv^0(\CC_t,\CC,C)[-1]&\ar[l]_-{F^{\infty,\top}_\mp}\Sto(S^1_\tau,\CC,C)
}
\]
also commutes. We~will focus on $F^\infty_-,F^{\infty,\top}_-$. Together with the equivalences $\wti_\infty^{-1}$ seen in Section \ref{subsec:equivPervSt} and $i_\infty^{-1}$ seen above, the proof of Theorem \ref{th:main} is completed.

Given $\cM$ in $\mathsf{Mero}(\infty,C)$, that we express as $i_\infty^{-1}M$, the~Fourier transform~$N=F_-M$ is defined as $\D p_*(\D q^*M,\nabla-\rd(\tau t))$. We~set
\[
\scrM:=(\CC\{\tau'\}[t]\otimes_{\CC\{\tau'\}}\cM,\nabla-\rd(\tau t)).
\]
This is the analytic germ along $\{\infty\}\times\CC_t$ of $(\D q^*M,\nabla-\rd(\tau t))$, the latter being regarded as a holonomic $\cD_{\PP^1_\tau\times\CC_t}$-module.

Let $(\cL,\cL_\bbullet)$ be the $\CC$-Stokes-filtered local system associated to $\cM$ on $S^1_\tau$ by the Riemann-Hilbert-Birkhoff-Deligne-Malgrange (RHB) correspondence and let $\scrL_{<0}$ be the sheaf on $S^1_\tau\times\CC_t$ defined in Section \ref{subsec:FStokes}. We~will simply denote by $\varpi$---instead of $\varpi\times\id$---the real blowing up (\cf Diagram \eqref{eq:diag1}) $\wt\PP^1_\tau\times\CC_t\to\PP^1_\tau\times\CC_t$ as well as its restriction $\wt D_\infty=S^1_\tau\times\CC_t\to D_\infty=\{\infty\}\times\CC_t$ above $D_\infty$.

\begin{theoreme}\label{th:compFou}
There exists a functorial isomorphism $\DR^\an(\scrM)\simeq\bR\varpi_*\scrL_{<0}$.
\end{theoreme}

Set $\scrG=\DR^\an(\D q^*M,\nabla-\rd(\tau t))$, which a constructible complex on $\PP^1_\tau\times\CC_t$. Letting $j_\infty:\CC_\tau\times\CC_t\hto\PP^1_\tau\times\CC_t$ and $i_\infty:\{\infty\}\times\CC_t\hto\PP^1_\tau\times\CC_t$ denote the complementary inclusions, we~have $\DR^\an(\scrM)\simeq i_\infty^{-1}\scrG$ and $\bR p_*\bR j_{\infty,!}\,j_\infty^{-1}\scrG=0$ (by~Lemma \ref{lem:vanishing}), so~that\enlargethispage{\baselineskip}
\[
\pDR^\an(N)\simeq\bR p_*\pDR^\an(\D q^*M,\nabla-\rd(\tau t))=\bR p_*\scrG[2]\simeq\bR p_*\DR^\an(\scrM)[2].
\]
According to the commutation of the analytic de~Rham functor with proper pushforward of holonomic $\cD$-modules, we~obtain, according to Theorem \ref{th:compFou}:

\begin{corollaire}\label{cor:RHB}
The analytic de~Rham complex $\pDR^\an(N)$ is isomorphic to $\bR\wt p_*\scrL_{<0}[2]$.\qed
\end{corollaire}

\begin{remarque}
Since $\pDR^\an(N)$ is known to be a perverse complex, and has zero global cohomology (\cf the introduction), we~recover the properties of Proposition \ref{prop:pushLneg} for $\bR\wt p_*\scrL_{<0}[2]$ when $(\cL,\cL_\bbullet)$ is defined over any subfield of $\CC$.
\end{remarque}

\subsubsection{First reduction for the proof of Theorem \ref{th:compFou}}
We realize the isomorphism stated in Theorem \ref{th:compFou} as the pushforward by $\varpi$ of an isomorphism on $\wt\PP^1_\tau\times\CC_t$. For that purpose, let us consider the sheaf $\cA_{\wt\PP^1_\tau\times\CC_t}^\rmod$ on $\wt\PP^1_\tau\times\CC_t$ consisting of holomorphic functions on $\CC_\tau\times\CC_t$ having moderate growth along $\wt D_\infty$, and let $\cA_{S^1_\tau\times\CC_t}^\rmod$ denote its sheaf-theoretic restriction to $S^1_\tau\times\CC_t$. We~similarly set $\cO_{\wt\PP^1_\tau\times\CC_t}=\varpi^{-1}\cO_{\PP^1_\tau\times\CC_t}$, $\cO_{S^1_\tau\times\CC_t}=\cO_{\wt\PP^1_\tau\times\CC_t}|_{S^1_\tau\times\CC_t}$ and $\Omega^{k,\an}_{\{\infty\}\times\CC_t}=\Omega^{k,\an}_{\PP^1_\tau\times\CC_t}|_{\{\infty\}\times\CC_t}$ ($k\geq0$). We~then define $\DR^\rmod(\scrM)$ as the complex
\[
\Bigl(\cA_{S^1_\tau\times\CC_t}^\rmod \otimes_{\varpi^{-1}\cO_{S^1_\tau\times\CC_t}}\varpi^{-1}(\scrM\otimes\Omega^{\cbbullet,\an}_{\{\infty\}\times\CC_t}),\nabla-\rd(\tau t)(\tau t)\Bigr).
\]

The theorem is then a consequence of:

\begin{proposition}\label{prop:RHB}
There exists a functorial isomorphism $\DR^\rmod(\scrM)\simeq\scrL_{<0}$.
\end{proposition}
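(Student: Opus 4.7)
The plan is to reduce Proposition~\ref{prop:RHB} to a situation where the Hukuhara--Turrittin--Sibuya--Mochizuki identification of the moderate de~Rham complex with the $\leq 0$-part of the Stokes filtration applies, and to transport the result back along a blow-up. The obstruction to applying this identification directly to $\scrM$ on $(\PP^1_\tau\times\CC_t,D_\infty)$ is that the family $\Phi\cup\{0\}$ of exponential factors is not good at the finitely many points $(\infty,c)$ with $c\in C$: the factor $\varphi_c=\tau(c-t)$ vanishes on $\tau=\infty$ precisely at $t=c$, so $\varphi_c-0$ is not purely monomial there. As announced in the Dramatis personae, the remedy is the complex blow-up $e\colon X\to\PP^1_\tau\times\CC_t$ at these points, with normal crossings divisor $D=e^{-1}(D_\infty)$.

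The first step is to verify that the pullback $\scrM_\sX=e^+\scrM$ is a \emph{good} meromorphic flat bundle on $(X,D)$. This is a direct local computation in the two charts of the blow-up above $(\infty,c)$. In one chart, with coordinates $(\tau',b')$ and $t-c=b'\tau'$, one finds $\varphi_c=-b'$, which is regular and hence represents the class of~$0$ modulo~$\cO$, while each $\varphi_{c'}$ for $c'\neq c$ acquires a purely monomial pole $(c'-c)/\tau'$. In the other chart, with coordinates $(a',u)$ and $\tau'=a'u$, $u=t-c$, one~gets $\varphi_c=-1/a'$ and $\varphi_{c'}=(c'-c-u)/(a'u)$, whose pairwise differences are all purely monomial. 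Thus $\Phi_X\cup\{0\}$ is good along every irreducible component of $D$.

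Once good formal structure on~$\scrM_\sX$ is in hand, I would invoke the identification on the boundary~$\wt D$ of the real blow-up~$\wt X$ (\cf \cite[Prop.\,3.1]{Bibi16b} and \cite[Chap.\,12]{Bibi10}):
\[
\DR^\rmod(\scrM_\sX)\simeq\scrL_{\sX,\leq0}.
\]
Applying $\bR\wt e_*$, where $\wt e\colon\wt D\to\wt D_\infty=S^1_\tau\times\CC_t$ is the induced map, the right-hand side produces $\scrL_{<0}$ by the ``main relation'' stated in the Dramatis personae, while the left-hand side gives $\DR^\rmod(\scrM)$ by the compatibility of the moderate de~Rham functor with proper pushforward of holonomic $\cD$-modules, combined with the natural identification $e_+\scrM_\sX\simeq\scrM$ (valid since $e$ is birational and both sides are $\cO(*D_\infty)$-coherent agreeing outside the exceptional locus).

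The main obstacle I anticipate is the geometric identity $\bR\wt e_*\scrL_{\sX,\leq0}\simeq\scrL_{<0}$, which has to be checked over the bad points $(\theta,c)$: there the fiber of~$\wt e$ is a circle coming from the real blow-up of the exceptional~$\PP^1$, and one needs to verify fiberwise that the higher direct images vanish and that the degree-zero pushforward recovers the stalk $\scrL_{<0,(\theta,c)}=\cL_{<c,\theta}$. The verification is in the spirit of the fiberwise arguments of Section~\ref{sec:FStokes}, reducing via Lemma~\ref{lem:vanishing} to cohomology computations on closed discs with intervals removed, keeping track of how the Stokes order on the original factors $\varphi_c$ matches the pulled-back order on the factors $\psi_c\in\Phi_X$.
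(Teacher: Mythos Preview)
Your proposal is correct and follows essentially the same three-step strategy as the paper: achieve goodness of $\Phi_\sX\cup\{0\}$ via the complex blow-up $e$, identify $\DR^\rmod(\scrM_\sX)\simeq\scrL_{\sX,\leq0}$ on $\wt D$ (the paper's Theorem~\ref{th:hukuhara}), and push forward by $\bR\wt e_*$ using the compatibility of the moderate de~Rham complex with proper pushforward (Proposition~\ref{prop:wtL'L}) together with the main relation $\bR\wt e_*\scrL_{\sX,\leq0}\simeq\scrL_{<0}$ (Proposition~\ref{prop:F'F}). One small geometric correction for when you carry out that last verification: the fiber of~$\wt e$ over a point $(\theta,c_o)\in S^1_\tau\times\{c_o\}$ is not a circle but the real blow-up~$\wt E_{c_o}$ of the exceptional $\PP^1$ at its single crossing with~$D'_\infty$, hence a closed disc; on its interior one has $\scrL_{\sX,\leq0}=\wt e^{-1}\cL_{\leq c_o}$, while on the boundary circle~$S^1_w$ the $\gr_{c_o}$-piece survives only where $\theta_w\in(-\pi/2,\pi/2)$, so the quotient by $\wt e^{-1}\cL_{<c_o}$ pushes forward to zero by Lemma~\ref{lem:vanishing}.
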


\begin{proof}[Proof of Theorem \ref{th:compFou}]
We use Proposition \ref{prop:RHB} and the isomorphism
\[
\bR\varpi_*\DR^\rmod(\scrM)\simeq\DR^\an(\scrM),
\]
which follows from the isomorphism $\bR\varpi_*\cA_{S^1_\tau\times\CC_t}^\rmod\simeq\cO_{\{\infty\}\times\CC_t}(*D_\infty)$.
\end{proof}

The remaining part of this section is devoted to the proof of Proposition \ref{prop:RHB}.

\subsubsection{The complex and real blowing up of \texorpdfstring{$\PP^1_\tau\times\CC_t$}{PC}}\label{subsec:RH}

The proof of Proposition \ref{prop:RHB} cannot be easily done on $\PP^1_\tau\times\CC_t$ as we do not have the suitable asymptotic analysis at our disposal, due to the non-goodness of $\Phi\cup\{0\}$. The solution to this problem in the present setting consists in blowing up $\PP^1_\tau\times\CC_t$ at each point $(\infty,c)\in D_\infty$. This is a birational morphism $e:X\to\PP^1_\tau\times\CC_t$ which is an isomorphism away from the points $(\infty,c)$ with $c\in C$. In~particular, it is an isomorphism above $\CC_\tau\times\CC_t$. The pullback $D=e^{-1}(D_\infty)$ is a normal crossing divisor, which is the union\vspace*{-3pt}
\[
D=D'_\infty\cup E,\quad E=\bigsqcup_{c\in C}E_c,
\]
where $D'_\infty$ is the strict transform of $D_\infty$ (so that $e:D'_\infty\to D_\infty$ is an isomorphism) and $E$ is the exceptional divisor, with each $E_c=e^{-1}(c)$ isomorphic to $\PP^1$.

Correspondingly, we~consider the real blowing up $\varpi_\sX:\wt X\to X$ of the irreducible components of $D$. The diagram \eqref{eq:diag1} can be completed as the diagram~\eqref{eq:diag2}.\vspace*{-3pt}
\begin{equation}\label{eq:diag2}
\begin{array}{c}
\xymatrix@R=.5cm@C=1.2cm{
S^1_\tau\ar@{^{ (}->}[d]\ar@/_2.1pc/[dddd]&\wt D_\infty
\ar[l]_-{\wt q}
\ar@{_{ (}->}[d]
\ar@/^2.05pc/[dddd]|(.25)\hole|(.37)\hole|(.67)\hole|(.75)\hole|(.77)\hole
&&\wt D\ar[ll]_-{\wt e}\ar@/^2.1pc/[dddd]\ar@{_{ (}->}[d]
\\
\wt\PP^1_\tau\ar[dd]_-{\varpi}&\wt\PP^1_\tau\times\CC_t\ar[l]_-{\wt q}\ar[dd]_-{\varpi\times\id}\ar[rd]^-{\wt p}
&&\wt X\ar[ll]_-{\wt e}\ar[ld]_-{\wt\pi}\ar[dd]^{\varpi_\sX}\\
&&\CC_t\\
\PP^1_\tau&\PP^1_\tau\times\CC_t\ar[l]_-{q}\ar[ru]_-{p}
&&X\ar[ll]_-{e}\ar[lu]^-{\pi}\\
\{\infty\}\ar@{_{ (}->}[u]&D_\infty\ar@{^{ (}->}[u]\ar[l]_-{q}
&&D\ar@{^{ (}->}[u]\ar[ll]_-{e}
}
\end{array}
\end{equation}

We fix $c_o\in C$ and we denote by $\Delta_x=\Delta_x(c_o)\subset\CC_t$ the open disc centered at~$c_o$, with coordinate $x=t-c_o$, of radius $r>0$ small enough such that $\Delta_x$ contains no other point of~$C$. We~restrict the diagram \eqref{eq:diag2} to $\PP^1_\tau\times\Delta_x$, and we still denote by~$X$ the open subset $e^{-1}(\PP^1_\tau\times\Delta_x)$, and similarly with $\wt X$. Then $X$ is covered by the following charts.

\begin{enumeratea}
\item\label{enum:charta}
The chart with coordinates $(u,v)$ such that $e(u,v)=(\tau'=uv,x=u)$; in this chart, the divisor $D$ is defined by $uv=0$, the strict transform $D'_\infty$ of $D_\infty$ is defined by $v=0$ and has coordinate $u$, while the exceptional divisor $E$ is defined by $u=0$ and has coordinate~$v$; its pullback in $\wt X$ has polar coordinates $((\rho_u,\theta_u),(\rho_v,\theta_v))$ with\vspace*{-3pt}
\[
\rho_{\tau'}=\rho_u\rho_v,\quad\rho_x=\rho_u,\quad\theta=\theta_u+\theta_v,\quad\theta_x=\theta_u;
\]
this chart is the open set of $(\RR_+)^2\times(S^1)^2$ defined by $\rho_u\in[0,r)$; the pullback~$\wt D$ of~$D$ is identified with $\partial(\RR_+)^2\times(S^1)^2$, \ie the subset where $\rho_u$ or $\rho_v$ is zero; we~regard it as the (disjoint) union of $\wt D'_\infty=\{\rho_v=0\}$ and $S^1_u\times\CC^*_v$; of special interest is $\varpi_\sX^{-1}(0,0)=S^1_u\times S^1_v$.

\item\label{enum:chartb}
The chart with coordinates $(z,w)$ such that $e(z,w)=(\tau'=z,x=wz)$; in this chart, the divisor $D$ coincides with the exceptional divisor $E$ and is defined as $z=0$, so that $w$ is a coordinate on it; its pullback in $\wt X$ has polar coordinates $((\rho_z,\theta_z),w)$ with
\[
\rho_{\tau'}=\rho_z,\quad\theta=\theta_z;
\]
the pullback $\wt E$ is identified with $S^1_z\times\CC_w$ and its projection to $\wt\PP^1\times\Delta_x$ is nothing but the projection of $S^1_z\times\CC_w$ to $S^1_\tau\times\{0\}$ defined by $(\theta_z,w)\mto(\theta=\theta_z,0)$.
\end{enumeratea}

Let us make explicit the relations between the two charts on their intersection $\CC^*_v\times\Delta_u$, defined respectively by $v\neq0$ and $w\neq0$. It~is also identified with the open subset $\{z\neq0,\,|w|<r/|z|\}$ of $\CC^*_w\times\CC_z$. We~have $w=1/v$ (this is the change of chart on $E\simeq\PP^1$) and $z=uv$. Concerning polar coordinates, we~have $\rho_v\neq0$ and $w\neq0$ on the intersection, and $\theta_z=\theta_u+\theta_v$.

\subsubsection{The set of exponential factors \texorpdfstring{$\Phi$}{Phi}}
We consider the set $\Phi=\{\varphi_c\mid c\in C\}$ of exponential factors of $(\scrL,\scrL_\bbullet)$ and its pullback $\Phi_\sX=\{\psi_c:=\varphi_c\circ e\mid c\in C\}$. In~the local setting of Section \ref{subsec:RH}, $\Phi_\sX$ is as follows:
\begin{itemize}
\item
in the chart $(u,v)$, $\psi_c=(c-c_o)/uv-1/v$,
\item
in the chart $(z,w)$, $\psi_c=(c-c_o)/z$ modulo holomorphic functions.
\end{itemize}
Then $\Phi_\sX$ is a finite set of global sections of $\cO_\sX(*D)/\cO_\sX$, that we regard as defining a stratified covering $\Sigma\to D$ with $\Sigma$ contained in the sheaf space of $\cO_\sX(*D)/\cO_\sX$ (\cf\cite[Chap.\,1]{Bibi10} for this notion). The stratified property comes from the fact that the pole divisor of $\psi_{c_o}$ is equal to the strict transform $D'_\infty$ of $D_\infty$, and not to $D'_\infty\cup E_{c_o}$. Then~$\Phi_\sX$ is good at each point of $D$ since $\Phi$ as good at each point of~$D_\infty$. Even better, $\Phi_\sX$ is \emph{very good} since for $c\neq c'\in C$, $\psi_c-\psi_{c'}$ has a pole along \emph{each} component of $D$.

\begin{lemme}
Let $\wt x\in \wt D$. For $c\neq c'\in C$, we~have
\begin{align*}
\psi_c\leq_{\wt x}\psi_{c'}\iff \varphi_c\leq_{\wt e(\wt x)}\varphi_{c'},\\
\tag*{and}
\psi_c<_{\wt x}\psi_{c'}\iff \varphi_c<_{\wt e(\wt x)}\varphi_{c'}.
\end{align*}
\end{lemme}

\begin{proof}
The second equivalence follows from the first one, since for $c\neq c'$, $\psi_c-\psi_{c'}$ is nowhere holomorphic on $D$:
\begin{itemize}
\item
in the chart $(u,v)$, $\psi_c-\psi_c'=(c-c')/uv$,
\item
in the chart $(z,w)$, $\psi_c-\psi_c'=(c-c')/z$ modulo holomorphic functions.\qedhere
\end{itemize}
\end{proof}

\subsubsection{The Stokes-filtered local system \texorpdfstring{$(\scrL_\sX,\scrL_{\sX,\bbullet})$ on $\wt D$}{LLDX}}

Let $\scrL_\sX$ be the local system $\wt e^{-1}\scrL=(\wt e\circ\wt q)^{-1}\cL$ and let us set $\scrL_{\sX,\leq\psi_c}=\wt e^{-1}\scrL_{\leq\varphi_c}=(\wt e\circ\wt q)^{-1}\cL_{\leq c}$, and similarly with $<\psi_c$. Then $\gr_{\psi_c}\scrL_\sX=(\wt e\circ\wt q)^{-1}\gr_c\!\cL$ is a local system and, according to the lemma above, the filtration condition for $\scrL_{\sX,\bbullet}$ holds at~$\wt x$ iff it holds at $\wt e(\wt x)$ for $\scrL_\bbullet$, \ie iff it holds at $(\wt e\circ\wt q)(\wt x)\in S^1_\tau$ for $\cL_\bbullet$. It~follows that $(\scrL_\sX,\scrL_{\sX,\bbullet})$ is a Stokes-filtered local system on $\wt D$ indexed by $\Phi_\sX$.

We now consider the set $\Phi_\sX\cup\{0\}$. We~note that it is good at each point of $D$: for $c\in C$, $\psi_c-0=\psi_c$ is purely monomial both if $c\neq c_o$ and if $c=c_o$, where we recall that $c_o$ has been fixed at the beginning of the proof. (However, it is not very good any more, since $\psi_{c_o}$ does not have a pole along $E_{c_o}$.) We~can then extend $(\scrL_\sX,\scrL_{\sX,\bbullet})$ as a Stokes-filtered local system indexed by $\Phi_\sX\cup\{0\}$ by setting, for each $\wt x\in\wt D$:
\[
\scrL_{\sX,\leq0,\wt x}=\sum_{c\mid\psi_c\leq_{\wt x}0}\scrL_{\sX,\leq\psi_c,\wt x},\quad \scrL_{\sX,<0,\wt x}=\sum_{c\mid\psi_c<_{\wt x}0}\scrL_{\sX,\leq \psi_c,\wt x}.
\]
As the subsets $\{\wt x\mid\psi_c\leq_{\wt x}\psi_{c'}\}$ and $\{\wt x\mid\psi_c\leq_{\wt x}0\}$ are open, $\scrL_{\sX,\leq0}$ is a subsheaf of~$\scrL_\sX$. On the other hand, the subsets $\{\wt x\mid\psi_c<_{\wt x}0\}$ are not open, as~for example in the local setting of Section \ref{subsec:RH}, for $\wt x\in S^1_u\times S^1_v$, we~have $\psi_{c_o}\leq_{\wt x}0$ iff $\theta_v\in(-\pi/2,\pi/2)$ (with $\wt x=(\theta_u,\theta_v)$), and this is equivalent to $\psi_{c_o}<_{\wt x}0$, but the latter property does not hold at any nearby point $\wt y$ above a point of the exceptional divisor $E_{c_o}\moins\{(\infty,c_o)\}$. We~thus find:
\begin{itemize}
\item
on $\varpi_\sX^{-1}(D'_\infty)$, $\scrL_{\sX,<0}$ is a subsheaf of $\scrL_\sX$ which coincides with $\scrL_{\sX,\leq0}$,
\item
on $\varpi_\sX^{-1}(E_{c_o}\moins\{(\infty,c_o)\})$ where $\psi_{c_o}$ has no pole, we~have $\scrL_{\sX,\leq0}=(\wt q\circ\wt e)^{-1}\cL_{\leq c_o}$ and $\scrL_{\sX,<0}=(\wt q\circ\wt e)^{-1}\cL_{< c_o}$.
\end{itemize}
This illustrates the ``stratified'' definition of a Stokes-filtered local system along a normal crossing divisor (\cf\cite[\S1.9]{Bibi10}).

\subsubsection{Proof of Proposition \ref{prop:RHB}}

We consider the sheaf $\cA_{\wt X}^\rmod$ on $\wt X$ of holomorphic functions on $\CC_\tau\times\CC_t$ having moderate growth along $\wt D$ and its sheaf-theoretical restriction $\cA_{\wt D}^\rmod$ to $\wt D$. We~denote by $\scrM_\sX$ the pullback meromorphic bundle with flat connection $\D e^*\scrM$. We~can thus define the moderate de~Rham complex $\DR_{\wt D}^\rmod(\scrM_\sX)$, which is a complex on $\wt D$. The proof of Proposition \ref{prop:RHB} is done in three steps.

\begin{enumerate}
\item
The Hukuhara theorem in dimension two yields:

\begin{theoreme}\label{th:hukuhara}
We have $\cH^k(\DR_{\wt D}^\rmod(\scrM_\sX))=0$ if $k\neq0$ and a natural isomorphism $\cH^0\DR_{\wt D}^\rmod(\scrM_\sX)\simeq\scrL_{\sX,\leq0}$.
\end{theoreme}

\item
The moderate de~Rham complex behaves well by blowing down:

\begin{proposition}\label{prop:wtL'L}
There exists a natural isomorphism
\[
\bR\wt e_*\,\DR_{\wt D}^\rmod(\scrM_\sX)\simeq\DR_{S_\tau^1\times\CC_t}^\rmod(\scrM).
\]
\end{proposition}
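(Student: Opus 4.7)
The plan is to prove this proposition by a projection-formula argument whose technical heart is the identification of moderate-growth sheaves
\[
\bR\wt e_*\cA^\rmod_{\wt D}\simeq\cA^\rmod_{S^1_\tau\times\CC_t}.
\]
Once this is in hand, the proposition will follow by tensoring with the locally free meromorphic bundle $\scrM$ and its forms, using the identification $\scrM_\sX=e^+\scrM$ coming from the fact that $D=e^{-1}(D_\infty)$.

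First I would construct the natural comparison map. Since $e:X\to\PP^1_\tau\times\CC_t$ is proper and satisfies $\wt e^{-1}(S^1_\tau\times\CC_t)=\wt D$, pullback of holomorphic functions along $e$ preserves moderate growth, yielding a natural morphism $\wt e^{-1}\cA^\rmod_{S^1_\tau\times\CC_t}\to\cA^\rmod_{\wt D}$. Combined with the tautological pullback on differential forms coming from $\scrM_\sX=e^+\scrM$, this assembles into a morphism of complexes, and by adjunction yields
\[
\DR^\rmod_{S^1_\tau\times\CC_t}(\scrM)\longrightarrow\bR\wt e_*\DR^\rmod_{\wt D}(\scrM_\sX).
\]
Since $e$ is biholomorphic outside the finite set $\{(\infty,c):c\in C\}$, the map $\wt e$ is a homeomorphism outside $\wt\varpi^{-1}$ of this set, where the comparison is an isomorphism by inspection.

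Next, the assertion being local on $S^1_\tau\times\CC_t$, it suffices to check the isomorphism above each point $(\infty,c_o)$ with $c_o\in C$. I would work in the two analytic charts $(u,v)$ and $(z,w)$ of Section \ref{subsec:RH}. Using Dolbeault-type resolutions of $\cA^\rmod$ by sheaves of smooth functions with moderate growth (in the spirit of \cite[Chap.\,11]{Bibi10}), the computation reduces to the fiberwise vanishing $R^k\wt e_*\cA^\rmod_{\wt D}=0$ for $k\geq1$ together with $\wt e_*\cA^\rmod_{\wt D}\simeq\cA^\rmod_{S^1_\tau\times\CC_t}$ near these corners. Granting this, since $\scrM$ is $\cO_{\PP^1_\tau\times\CC_t}(*D_\infty)$-locally free, the projection formula gives
\[
\bR\wt e_*\DR^\rmod_{\wt D}(\scrM_\sX)\simeq\cA^\rmod_{S^1_\tau\times\CC_t}\otimes_{\varpi^{-1}\cO(*D_\infty)}\varpi^{-1}(\scrM\otimes\Omega^\bullet_{\{\infty\}\times\CC_t})=\DR^\rmod_{S^1_\tau\times\CC_t}(\scrM),
\]
which is the required isomorphism.

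The main obstacle will be the local vanishing statement for $R^k\wt e_*\cA^\rmod_{\wt D}$ at the blown-up corners. This is a Hukuhara/Grauert-type vanishing in the moderate-growth setting, requiring uniform asymptotic estimates as one approaches the exceptional divisors $E_{c_o}$: the fibers of $\wt e$ above $(\theta,c_o)\in S^1_\tau\times\{c_o\}$ are either a circle $S^1_v$ in the $(u,v)$-chart or an analogous locus in the $(z,w)$-chart (glued along $\{v\neq0\}=\{w\neq0\}$), and one must check that moderate growth along $\wt D$ glues and descends correctly across this fiber to give moderate growth along $S^1_\tau\times\CC_t$. Once this technical heart is established, the remaining projection-formula step is formal, and the whole proposition follows.
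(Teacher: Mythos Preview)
Your proposal is correct and follows the standard route: the paper itself gives no argument but simply cites \cite[Prop.\,8.9]{Bibi10}, and what you have sketched is precisely the content of that cited result---namely, the identification $\bR\wt e_*\cA^\rmod_{\wt D}\simeq\cA^\rmod_{S^1_\tau\times\CC_t}$ for a proper modification, followed by the projection formula using that $\scrM$ is $\cO(*D_\infty)$-locally free and $e^*\Omega^\cbbullet_{\PP^1_\tau\times\CC_t}(*D_\infty)\simeq\Omega^\cbbullet_X(*D)$.

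One small inaccuracy worth correcting: the fiber of $\wt e$ over a point $(\theta,c_o)\in S^1_\tau\times\{c_o\}$ is not a circle but the real blow-up $\wt E_{c_o}$ of $E_{c_o}\simeq\PP^1$ at $w=\infty$, hence a closed disc (cf.\ the description in the proof of Proposition~\ref{prop:F'F}). This does not affect your argument, since the vanishing of $R^k\wt e_*\cA^\rmod_{\wt D}$ for $k\geq1$ that you invoke is established in the reference by Dolbeault--Grothendieck type resolutions with moderate growth, exactly as you indicate.
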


\begin{proof}
See \eg\cite[Prop.\,8.9]{Bibi10}.
\end{proof}

\item
Similarly, the pushforward of $\scrL_{\sX,\leq0}$ by $\wt e$ is isomorphic to $\scrL_{<0}$:

\begin{proposition}\label{prop:F'F}
We have $R^k\wt e_*\scrL_{\sX,\leq0}=0$ for $k\neq0$ and the natural morphism $\wt e_*\scrL_{\sX,\leq0}\to\wt q^{-1}\cL$ induces an isomorphism $\wt e_*\scrL_{\sX,\leq0}\simeq\scrL_{<0}$, and so, $\bR\wt e_*\scrL_{\sX,\leq0}\simeq\scrL_{<0}$.
\end{proposition}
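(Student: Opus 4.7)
The plan is to exploit properness of $\wt e$ (it is the real blow-up of the proper morphism $e$) and use proper base change to compute both assertions stalk-by-stalk. Points $(\theta_0, t) \in \wt D_\infty$ split according to whether $t \in C$ or not. For $t \notin C$, the morphism $\wt e$ is a local homeomorphism near $(\theta_0, t)$ since $e$ is an isomorphism away from the blown-up points $(\infty, c)$, $c \in C$; the unique preimage $\wt x$ lies on the smooth part of $\wt D'_\infty$, and a direct computation from $\psi_c = (c - t)/\tau'$ gives $\psi_c \leq_{\wt x} 0 \iff c \leq_{\theta_0} t$, hence $\scrL_{\sX, \leq 0, \wt x} = \cL_{<t, \theta_0} = \scrL_{<0, (\theta_0, t)}$, with vanishing higher cohomology.

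The heart of the argument is at the points $(\theta_0, c_o)$ with $c_o \in C$. I would first identify the fiber $F := \wt e^{-1}(\theta_0, c_o)$ topologically as a closed disc, using the two charts $(u, v)$ and $(z, w)$ of Section \ref{subsec:RH}: its open interior is the plane $\{\theta_z = \theta_0\} \cong \CC_w$ inside $\wt E_{c_o}$ in chart $(z, w)$, and its boundary circle is the slice $\{\rho_u = \rho_v = 0,\ \theta_u + \theta_v = \theta_0\}$ of the corner torus in chart $(u, v)$, glued via $w = 1/v$ so that $\rho_v \to 0$ corresponds to $|w| \to \infty$. Next I would describe the subsheaf $\scrL_{\sX, \leq 0}|_F$ of the constant sheaf $\cL_{\theta_0}$. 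For $c \neq c_o$, the dominant monomial of $\psi_c = (c - c_o)/(uv) - 1/v$ gives $\psi_c \leq_{\wt x} 0 \iff \reel((c - c_o) e^{-\sfi \theta_0}) \leq 0 \iff c \leq_{\theta_0} c_o$, independently of $\wt x \in F$, so these contributions assemble into the constant subsheaf $\cL_{<c_o, \theta_0}$ on $F$. For $c = c_o$, the factor $\psi_{c_o} = -1/v$ has pole only along $D'_\infty$, so the locus $\{\psi_{c_o} \leq_{\wt x} 0\}$ is the complement in $F$ of the open half-arc $B_2 := \{\theta_v \in (\pi/2, 3\pi/2)\}$ of the boundary circle.

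Combining, I expect a short exact sequence
\[
0 \longrightarrow \cL_{<c_o, \theta_0}\big|_F \longrightarrow \scrL_{\sX, \leq 0}\big|_F \longrightarrow j_!\, \gr_{c_o}\!\cL_{\theta_0} \longrightarrow 0,
\]
where $j : F \moins \overline{B_2} \hto F$ is the inclusion of the open complement of the closed arc $\overline{B_2}$. From the complementary sequence $0 \to j_! \bmk \to \bmk_F \to i_* \bmk_{\overline{B_2}} \to 0$, the contractibility of both $F$ and $\overline{B_2}$ makes the restriction $H^0(F, \bmk) \to H^0(\overline{B_2}, \bmk)$ an isomorphism, so $H^*(F, j_!\, \gr_{c_o}\!\cL) = 0$. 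The long exact sequence then yields $H^0(F, \scrL_{\sX, \leq 0}|_F) = \cL_{<c_o, \theta_0} = \scrL_{<0, (\theta_0, c_o)}$ with no higher cohomology, as desired.

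The main obstacle I anticipate is pinning down the sheaf-theoretic description of $\scrL_{\sX, \leq 0}|_F$ at the two endpoints of $B_2$ (where $\cos \theta_v = 0$). At such a point $\wt x$ the pointwise recipe $\scrL_{\sX, \leq 0, \wt x} = \sum_{\psi_c \leq_{\wt x} 0} \scrL_{\sX, \leq \psi_c, \wt x}$ would naively include the $c_o$-term (giving $\cL_{\leq c_o, \theta_0}$), but the sheaf axiom forces sections over every neighborhood to land in $\cL_{<c_o, \theta_0}$ via the adjacent $B_2$-points where $\psi_{c_o} \not\leq 0$. Consequently the quotient $\scrL_{\sX, \leq 0}|_F / \cL_{<c_o}$ must be $j_!$-extended from the \emph{open} set $F \moins \overline{B_2}$ rather than from the closed set $F \moins B_2$, and it is precisely this choice that delivers the answer $\cL_{<c_o, \theta_0}$ rather than $\cL_{\leq c_o, \theta_0}$ for the global sections.
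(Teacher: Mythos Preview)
Your argument is correct and follows essentially the same route as the paper: reduce to the fibers over $(\theta_0,c_o)$, identify the fiber with a closed disc, exhibit the short exact sequence with kernel the constant sheaf $\cL_{<c_o,\theta_0}$ and cokernel the extension by zero of $\gr_{c_o}\!\cL_{\theta_0}$ from an open set of type $\ov\Delta_<$, and conclude by the vanishing of Lemma~\ref{lem:vanishing}. The paper does the same computation parametrically over all of $S^1_\tau\times\{c_o\}$ rather than one $\theta_0$ at a time, but this is immaterial.

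One point to clean up: your last paragraph treats the endpoint question as an obstacle to be resolved by ``the sheaf axiom'', but this is not the right justification. The relation $\psi_{c_o}\leq_{\wt x}0$ is \emph{defined} (following \cite[Chap.\,9]{Bibi10}) as ``$\psi_{c_o}$ holomorphic at the image of $\wt x$, or $e^{\psi_{c_o}}$ of rapid decay along~$\wt x$''; since $\psi_{c_o}=-1/v$ has a genuine pole along $D'_\infty$, the condition at a corner point $(\theta_u,\theta_v)$ reads $\cos\theta_v>0$, an \emph{open} condition that already excludes the endpoints $\theta_v=\pm\pi/2$. Hence the stalk there is $\cL_{<c_o,\theta_0}$ directly by definition, and the quotient is $j_!\gr_{c_o}\!\cL_{\theta_0}$ from the open set $F\moins\overline{B_2}$ without further argument. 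The paper states this explicitly (the ``otherwise'' case in its description of $\scrL_{\sX,\leq0,(\theta_w,\theta_z)}$). Your invocation of the sheaf axiom would not by itself force the stalk to be smaller than the pointwise recipe; it is the recipe itself that gives the right answer.
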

\end{enumerate}

We obtain Proposition \ref{prop:RHB} by applying $\bR\wt e_*$ to the isomorphism of Theorem \ref{th:hukuhara} and then Propositions \ref{prop:wtL'L} and \ref{prop:F'F} on each side.\qed

\subsubsection{Proof of Theorem \ref{th:hukuhara}}
For the first part, \cf\eg\cite[Cor.\,12.7]{Bibi10}. For the second part, we~first exhibit a morphism $\cH^0\DR_{\wt D}^\rmod(\scrM)\to\scrL_\sX$ and then show that it is an isomorphism onto $\scrL_{\sX,\leq0}$. For that purpose, let us set $\cL^*=j_\infty^{-1}\DR^\an(M)$, which is a local system  on~$\CC_\tau^*$. Then, $j_\infty^{-1}\DR^\an(\D q^*M,\D q^*\nabla-\rd(\tau t))$ is a local system  on $\CC^*_\tau\times\CC_t$, canonically isomorphic to $\scrL^*:=q^{-1}\cL^*$.

Let us denote now by $\wtj_\infty,\wti_\infty$ the pair of complementary inclusions $\CC^*_\tau\times\CC_t\hto\wt X$ and $\wt D\hto X$.

\begin{lemme}\label{lem:L'L*}
We have $\scrL_\sX=\wti_\infty^{-1}\wtj_{\infty,*}\scrL^*\simeq\wti_\infty^{-1}\bR\wtj_{\infty,*}\scrL^*$.
\end{lemme}

\begin{proof}
The analysis of neighborhoods made in Section \ref{subsec:RH}, \eqref{enum:charta} and \eqref{enum:chartb} shows that there exists a basis of open neighborhoods $U$ of a point of $S^1_\tau\times\CC_t$ in $\wt\PP^1_\tau\times\CC_t$, or of a point of $\wt D$ in $\wt X$ which is not above a crossing point of $D$, of the form $I\times[0,\epsilon)\times\Delta$, where $I$ is an open interval in $S^1$ and $\Delta$ is an open disc in $\CC$. Above a crossing point of $D$, the neighborhoods take the form $I^2\times[0,\epsilon)^2$. The trace of $\CC^*_\tau\times\CC_t$, or $X\moins D$, on such an open neighborhood is then $I\times(0,\epsilon)\times\Delta$ or $I^2\times(0,\epsilon)^2$. In~any case, this is a contractible open set.  It follows that
\[
i_\infty^{-1}\bR j_{\infty,*}\scrL^*\simeq i_\infty^{-1}j_{\infty,*}\scrL^*\quand \wti_\infty^{-1}\bR\wtj_{\infty,*}\scrL^*\simeq\wti_\infty^{-1}\wtj_{\infty,*}\scrL^*.
\]
On the other hand, we have a natural morphism $\wt e^{-1}j_{\infty,*}\scrL^*\to\wtj_{\infty,*}\scrL^*$, and thus, by applying $\wti_\infty^{-1}$, a morphism $\scrL_\sX=\wt e^{-1}\scrL\to\wti_\infty^{-1}\wtj_{\infty,*}\scrL^*$. That it is an isomorphism is thus a local question at points of $\wt E:=\varpi_\sX^{-1}(\bigcup_cE_c)$. As we have seen above, any $\wt x\in\wt E$ has a basis of open neighborhoods $\wt U$ in $\wt X$ such that $\wt U^*=\wt U\moins\wt D$ is contractible. A similar property holds for its image $\wt e(\wt x)=(\theta_{\tau'},c)\in\wt D_\infty=S^1_\tau\times\CC_t$ with $\wt V\subset \wt\PP^1_\tau\times\CC_t$ and $\wt V^*=\wt V\moins(S^1_\tau\times\CC_t)$. On the one hand, $\scrL_{\sX,\wt x}=\varinjlim_{\wt V^*}\Gamma(\wt V^*,\scrL^*)$, and $(\wti_\infty^{-1}\wtj_{\infty,*}\scrL^*)_{\wt x}=\varinjlim_{\wt U^*}\Gamma(\wt U^*,\scrL^*)$. If $\wt e(\wt U^*)\subset\wt V^*$, then the restriction morphism $\Gamma(\wt V^*,\scrL^*)\to\Gamma(\wt U^*,\scrL^*)$ is an isomorphism since~$\scrL^*$ is locally constant and both $\wt U^*$ and $\wt V^*$ are contractible. This concludes the proof.
\end{proof}

According to this lemma, from the natural morphism
\[
\cH^0\DR_{\wt D}^\rmod(\scrM_\sX)\to \wtj_{\infty,*}\wtj_\infty^{-1}\cH^0\DR_{\wt D}^\rmod(\scrM_\sX)=\wtj_{\infty,*}\scrL^*
\]
we obtain, by applying $\wti_\infty^{-1}$, the desired morphism
\[
\cH^0\DR_{\wt D}^\rmod(\scrM_\sX)\to\wti_\infty^{-1}\wtj_{\infty,*}\scrL^*=\scrL_\sX.
\]

We now show that the image of the latter morphism is $\scrL_{\sX,\leq0}$. This is clear away from the pullback of the points $(\infty,c)$. Since the question is local, we fix~$\wt x$ projecting to $(\infty,c_o)\in D_\infty$ with $c_o\in C$. Up to tensoring all data, we can, and will, assume that $c_o=0$, so that the coordinate $x=t-c_o$ is equal to $t$ (\cf Section~\ref{subsec:RH}).

In the chart $(z,w)$, we have $e^*\rd(\tau t)=e^*\rd(t/\tau')=\rd w$, so that $\scrM_X$ is locally isomorphic to $\D e^*\D q^*\cM$, and $\psi_c=c/z=c/\tau'$, so that the Stokes-filtered local system attached to $\scrM_\sX$, which is indexed by $\Phi_X$, is the pullback by $\wt q\circ\wt e$ of that of~$M$, which is indexed by $C/\tau'$.

In the chart $(u,v)$, we have $e^*\rd(\tau t)=e^*\rd(x\tau)\rd(1/v)$, and $\psi_c=c/\tau'-1/v$ if $c\neq0$ and $\psi_0=-1/v$. Hence $\scrM_X$ is obtained from $\D e^*\D q^*\cM$ by twisting the connection by $-\rd(1/v)$ and $\Phi_X$ is obtained from $\Phi$ by adding $-1/v$ to $\Phi$. We~conclude that the Stokes-filtered local system of $\scrM_X$ on $S^1_u\times S^1_v$ is simply the pullback of that of $\cM$ by the map $(\theta_u,\theta_v)\mto\theta_u+\theta_v$, and this concludes the proof of Theorem \ref{th:hukuhara}.
\qed

\subsubsection{Proof of Proposition \ref{prop:F'F}}
Away from $S^1_\tau\times C\subset S^1_\tau\times\CC_t$, the map $\wt e:\wt D\to S^1_\tau\times\CC_t$ is an isomorphism, and the assertion amounts to $\scrL_{\leq0}=\scrL_{<0}$, which follows from the definition of $\scrL_{<0}$ in Section \ref{subsec:FStokes}.

Let us fix $c_o\in C$. The inverse image $\wt e^{-1}(S^1_\tau\times\{c_o\})$ is isomorphic to $\wt E_{c_o}\times S^1_z$, where $\wt E_{c_o}$ is the real blow-up of the exceptional divisor $E_{c_o}$ at $w=\infty$, so that $\wt E_{c_o}$ is diffeomorphic to a closed disc, with boundary $S^1_w$. The map \hbox{$\wt e:\wt E_{c_o}\times S^1_z\to S^1_\tau\times\{c_o\}$} is induced by the projection $\wt E_{c_o}\times S^1_z\to S^1_z$ composed with the isomorphism \hbox{$S^1_z\isom S^1_\tau$} induced by $\theta_z\mto\theta$. We~will describe the restriction to $\wt e^{-1}(S^1_\tau\times\{c_o\})$ of the sheaves~$\scrL_{\sX,\leq0}$ (and we now omit to mention this restriction functor).

\begin{itemize}
\item
Let us set $E_{c_o}^\circ\!=\!E_{c_o}\moins\infty\!=\!\wt E_{c_o}\moins S^1_w$. On $E_{c_o}^\circ\!\times\! S^1_z$, $\scrL_{\sX,\leq0}$ coincides with $\wt e^{-1}(\cL_{\leq c_o})$.
\item
On $S^1_w\times S^1_z$, we~have inclusions
\[
\wt e^{-1}(\cL_{< c_o})\subset \scrL_{\sX,\leq0}\subset\wt e^{-1}(\cL_{\leq c_o}).
\]
More precisely, locally with respect to $S^1_z$ that we identify with $S^1_\tau$, we~have decompositions
\[
\cL_{\leq c_o,\theta_z}\simeq\bigoplus_{c\leq_{\theta_z}c_o}\gr_c\!\cL_{\theta_z}\quand \cL_{< c_o,\theta_z}\simeq\bigoplus_{\substack{c\neq c_o\\c\leq_{\theta_z}c_o}}\gr_c\!\cL_{\theta_z}.
\]
Then at a point $(\theta_w,\theta_z)\in S^1_w\times S^1_z$, we~have
\[
\scrL_{\sX,\leq0,(\theta_w,\theta_z)}=
\begin{cases}
\bigoplus_{c\leq_{\theta_z}c_o}\gr_c\!\cL_{\theta_z}&\text{if }\theta_w\in(-\pi/2,\pi/2),\\
\cL_{< c_o,\theta_z}\simeq\bigoplus_{\substack{c\neq c_o\\c\leq_{\theta_z}c_o}}\gr_c\!\cL_{\theta_z}&\text{otherwise}.
\end{cases}
\]
\end{itemize}
In conclusion, let $U_{c_o}$ be the open subset of $\wt E_{c_o}$ which is the complement of the closed subset~$[\pi/2,3\pi/2]$ of $S^1_w$. Then, on $\wt E_{c_o}\times S^1_z$, we~have an exact sequence
\[
0\to \wt e^{-1}(\cL_{< c_o})\to \scrL_{\sX,\leq0}\to \scrQ\to0,
\]
where the quotient sheaf $\scrQ$ is isomorphic to the sheaf
\[
(\wt e^{-1}\gr_{c_o}\!\cL)_{U_{c_o}\times S^1_z}\simeq \CC_{U_{c_o}}\boxtimes \gr_{c_o}\!\cL
\]
(\ie the extension by zero of its restriction to $U_{c_o}\times S^1_z$). We~have
\begin{align*}
\bR\wt e_*\wt e^{-1}(\cL_{< c_o})&\simeq \bR\wt e_*\CC_{\wt E_{c_o}}\otimes \cL_{< c_o}\simeq \cL_{< c_o}\quad \text{($\wt E_{c_o}$ is a closed disc),}\\
\bR\wt e_*(\CC_{U_{c_o}}\boxtimes \gr_{c_o}\!\cL)&\simeq\bR\Gamma_\rc(U_{c_o})\otimes\gr_{c_o}\!\cL=0\quad(\text{Lemma \ref{lem:vanishing}}).
\end{align*}
We conclude that the natural morphism $\scrL_{<0}|_{S^1_\tau\times\{c_o\}}\simeq\cL_{<c_o}\hto \wt e_*\scrL_{\sX,\leq0}\simeq \bR \wt e_*\scrL_{\sX,\leq0}$ is an isomorphism.\qed

\backmatter
\bibliographystyle{amsalpha}
\bibliography{sabbah_three-results}
\end{document}